\newtheorem{theorem}{Theorem}[section]
\newtheorem{proposition}[theorem]{Proposition}
\newtheorem{corollary}[theorem]{Corollary}
\newtheorem{lemma}[theorem]{Lemma}
\newtheorem{introthm}{Theorem}
\newtheorem{introcor}[introthm]{Corollary}
\theoremstyle{definition}
\newtheorem{definition}[theorem]{Definition}
\theoremstyle{remark}
\newtheorem{remark}[theorem]{Remark}
\newcommand{\thmref}[1]{Theorem~\ref{Thm:#1}}
\newcommand{\propref}[1]{Proposition~\ref{Prop:#1}}
\newcommand{\secref}[1]{\S\ref{Sec:#1}}
\newcommand{\lemref}[1]{Lemma~\ref{Lem:#1}}
\newcommand{\corref}[1]{Corollary~\ref{Cor:#1}}
\newcommand{\figref}[1]{Fig.~\ref{Fig:#1}}
\newcommand{\eqnref}[1]{Equation~\eqref{Eq:#1}}
\newcommand{\defref}[1]{Definition~\ref{Def:#1}}
\newcommand{\calA}{{\mathcal A}}
\newcommand{\calB}{{\mathcal B}}
\newcommand{\calC}{{\mathcal C}}
\newcommand{\calD}{{\mathcal D}}
\newcommand{\calE}{{\mathcal E}}
\newcommand{\calF}{{\mathcal F}}
\newcommand{\calG}{{\mathcal G}}
\newcommand{\M}{{\mathcal M}}
\newcommand{\calM}{{\mathcal M}}
\newcommand{\calN}{{\mathcal N}}
\newcommand{\calP}{{\mathcal P}}
\newcommand{\T}{{\mathcal T}}
\newcommand{\calV}{{\mathcal V}}
\newcommand{\calW}{{\mathcal W}}
\newcommand{\calX}{{\mathcal X}}
\newcommand{\calY}{{\mathcal Y}}
\newcommand{\calZ}{{\mathcal Z}}
\newcommand{\NE}{\calN\calE}
\newcommand{\HH}{{\mathbb H}}
\newcommand{\R}{{\mathbb R}}
\newcommand{\ZZ}{{\mathbb Z}}
  \newcommand{\gothic}{\mathfrak}
  \newcommand{\ga}{{\gothic a}}
  \newcommand{\gb}{{\gothic b}}
  \newcommand{\gc}{{\gothic c}}
\newcommand{\from}{\colon \thinspace}
\newcommand{\ST}{{\: \Big| \:}}
\renewcommand{\d}{{\rm d}}
\newcommand{\emul}{\stackrel{{}_\ast}{\asymp}}
\newcommand{\gmul}{\stackrel{{}_\ast}{\succ}}
\newcommand{\lmul}{\stackrel{{}_\ast}{\prec}}
\newcommand{\eadd}{\stackrel{{}_+}{\asymp}}
\newcommand{\gadd}{\stackrel{{}_+}{\succ}}
\newcommand{\ladd}{\stackrel{{}_+}{\prec}}
\newcommand{\ep}{\epsilon}
\newcommand{\Teich}{Teich\-m\"uller~}
\newcommand{\param}{{\mathchoice{\mkern1mu\mbox{\raise2.2pt\hbox{$
\centerdot$}}
\mkern1mu}{\mkern1mu\mbox{\raise2.2pt\hbox{$\centerdot$}}\mkern1mu}{
\mkern1.5mu\centerdot\mkern1.5mu}{\mkern1.5mu\centerdot\mkern1.5mu}}}
\DeclareMathOperator{\twist}{twist}
\DeclareMathOperator{\Mod}{Mod}
\DeclareMathOperator{\I}{i}
\DeclareMathOperator{\rank}{{rank}}
\DeclareMathOperator{\Map}{{Mod}}
\DeclareMathOperator{\Vol}{Vol}
\newcommand{\bDelta}{\overline \Delta}  
\newcommand{\balpha}{{\boldsymbol \alpha}}
\newcommand{\AM}{{\calA \calM}}
\newcommand{\C}{{\calC}}
\newcommand{\X}{{\calX}}
\newcommand{\Gr}{{\text{G}}}
\newcommand{\bY}{\mathbf{Y}}
\newcommand{\bC}{\mathbf{C}}
\newcommand{\bP}{\mathbf{P}}
\begin{document}

\title{Large scale rank of Teichm\"uller space}
\author{Alex Eskin}
\author{Howard Masur}
\author{Kasra Rafi}

\thanks{\tiny \noindent 
The first author is partially supported by NSF grants  DMS 0905912 and 
DMS 1201422. The second author is partially supported by NSF
grant DMS-0905907.  The third author is partially supported by
by NSF grant DMS-1007811.}

\date{\today}

\begin{abstract}
Let $\X$ be quasi-isometric to either the mapping class group equipped
with the word metric, or to \Teich space equipped with either the \Teich metric 
or the Weil-Petersson metric. We introduce a unified approach to study the coarse 
geometry of these spaces. We show that the quasi-Lipschitz image in $\X$ of 
a box  in $\R^n$ is locally near a standard model of a flat in $\X$. As a 
consequence, we show that, for all these spaces, the geometric rank and 
the topological rank are equal. The methods are axiomatic and apply to 
a larger class of metric spaces.
\end{abstract} 

\maketitle

\section{Introduction}

In this paper we study the large scale geometry of several metric spaces:
the \Teich space $\T(S)$ equipped with the \Teich metric $d_\T$, the \Teich space
equipped with the Weil-Petersson metric $d_{W\!P}$ and the mapping class 
group $\Mod(S)$ equipped with the word metric $d_W$. (Brock \cite{brock:WV} showed that the Weil-Petersson metric is quasi-isometric to the pants complex). Even though 
the definitions of distance in these spaces are very different, they share a key 
feature, namely, an inductive structure. That is, they are a union of product regions 
associated to lower complexity surfaces with the gluing pattern given by the 
curve complex. 

Let $S$ be a possibly disconnected surface of finite hyperbolic type and
let $\X=\X(S)$ be a metric space that is quasi-isometric to one of the metric 
spaces mentioned above. One major goal in understanding the large scale geometry of a metric space is prove quasi-isometric rigidity of the space; that any quasi-isometry is bounded distance from an isometry. The usual starting point is to understand its flats. By a flat here we mean a quasi-isometric image of Euclidean space. We analyze 
quasi-Lipschitz maps from a \emph{large box} $B\subset \R^n$ into $\X$.  Our 
goal is to give a  description of the image of such a map on a \emph{large sub-box}
$B'\subset B$. We show that the image of $B'$ looks like a \emph{standard flat} 
up to a small linear error. A standard flat is a  product of {\em preferred paths} 
associated to disjoint subsurfaces of $S$
(see \defref{Preferred} and \defref{StandardFlat}). 

Our main theorem is the following. 
\begin{introthm}[Image of a box is locally standard] 
\label{Thm:Intro-Standard-Flat} 
Let $\X$ be either of \Teich space $\T(S)$ equipped with the \Teich metric $d_\T$,  \Teich space
with the Weil-Petersson metric $d_{W\!P}$, (or pants graph) or  the mapping class 
group $\Mod(S)$ equipped with the word metric $d_W$. For all $K,C$ and for all $R_0,\epsilon_0$ there exists $R_1$  such that if $B$ is 
a box of size at least $R_1$ and $f\from B\to \X$ is a $(K,C)$--quasi-Lipschitz 
map,  then  there is a sub-box $B'\subset B$ of size $R'\geq R_0$ such 
that  $f(B')$ lies inside an $O(\epsilon_0 R')$--neighborhood of a standard flat in 
$\X$.
\end{introthm}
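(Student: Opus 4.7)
The plan is to analyze $f$ through the \emph{distance formula} available in all three of these settings: $d_\X(x,y)$ is coarsely comparable to $\sum_Y [d_{\calC(Y)}(\pi_Y x, \pi_Y y)]_T$, summed over isotopy classes of essential subsurfaces $Y \subseteq S$ with a cutoff threshold $T$ (Masur--Minsky for $\Mod(S)$, Rafi for $d_\T$, and Brock together with Masur--Minsky for the pants graph, and hence for $d_{W\!P}$). Composing $f$ with any projection $\pi_Y$ gives a quasi-Lipschitz map $B \to \calC(Y)$ into a Gromov hyperbolic space, so the image of $f$ is controlled as soon as one controls the combinatorics of the collection of subsurfaces that contribute nontrivially.

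The first main step is to find, by iterated subdivision of $B$, a sub-box on which this combinatorics stabilizes. Call $Y$ \emph{$\epsilon_0$-active} on a sub-box $B''$ of size $R''$ if $\diam \pi_Y f(B'') \geq \epsilon_0 R''$. Since $\diam f(B'')$ is bounded by a multiple of $R''$, a covering-and-pigeonhole argument through the distance formula shows that the number of $\epsilon_0$-active subsurfaces is bounded by a constant depending on $n, K, \epsilon_0$ but independent of $R''$. Iterating the pigeonhole finitely many times at decreasing scales produces a sub-box $B'$ of size $R' \geq R_0$ together with a collection $\calY = \{Y_1, \dots, Y_k\}$ that is $\epsilon_0$-active on $B'$ and for which every subsurface outside $\calY$ projects $f(B')$ to diameter at most $\epsilon_0 R'$ in its curve complex.

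Next I would show that $\calY$ consists of pairwise disjoint subsurfaces. For nested pairs, the bounded geodesic image theorem forces the smaller one to have bounded projection whenever the larger one is traversed by a $\calC$--geodesic, which allows a further subdivision that deactivates one of the two. For overlapping pairs, the Behrstock inequality partitions $B'$ into regions on which at most one of the two projections is large, and again a subdivision deactivates one. Once the elements of $\calY$ are known to be disjoint, the joint projection $\prod \pi_{Y_i}$ coarsely embeds $f(B')$ into the hyperbolic product $\prod \calC(Y_i)$, and the Euclidean rank bound on $B' \subset \R^n$ controls the combinatorial size of $\calY$ via the topological dimension of a standard flat.

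The step I expect to be the main obstacle is upgrading this \emph{combinatorial} product structure into the \emph{geometric} conclusion that $f(B')$ lies in an $O(\epsilon_0 R')$--neighborhood of an actual standard flat, i.e.\ a product of preferred paths in $\X$. For each $Y_i$, the projection $\pi_{Y_i}\circ f$ is a quasi-Lipschitz map into the Gromov hyperbolic $\calC(Y_i)$ with diameter on the order of $R'$; the Morse lemma lets it be shadowed by a geodesic, which can then be realized in $\X$ as a preferred path for $Y_i$. The delicate part is coordinating these shadowings across all factors simultaneously while ensuring that the contributions from non-active subsurfaces, together with the accumulated additive errors from the distance formula, the pigeonhole iterations, and the Morse lemma, all fit within the $O(\epsilon_0 R')$ budget; this is where the axiomatic framework advertised in the abstract should do the real work, letting the same bookkeeping handle $\T$, $d_{W\!P}$, and $\Mod(S)$ uniformly and controlling the dependence of $R_1$ on $R_0$ and $\epsilon_0$.
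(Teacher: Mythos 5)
Your proposal reaches for the right ingredients---the distance formula, subsurface projections, Bounded Geodesic Image, Behrstock's inequality, the product-region picture---but it omits the one tool that makes the whole thing possible, and the omission opens a real gap.

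The critical error is the appeal to the Morse lemma: ``the projection $\pi_{Y_i}\circ f$ is a quasi-Lipschitz map into the Gromov hyperbolic $\calC(Y_i)$ with diameter on the order of $R'$; the Morse lemma lets it be shadowed by a geodesic.''  The Morse lemma applies to quasi-\emph{geodesics}, i.e.\ maps satisfying a two-sided bound $\frac{1}{K}|s-t|-C \leq d\bigl(\gamma(s),\gamma(t)\bigr) \leq K|s-t|+C$.  A quasi-Lipschitz map of a box only has the upper bound, and a quasi-Lipschitz map from $\R^n$ to a hyperbolic space can backtrack arbitrarily, wander, or even be constant; its image need not lie near any geodesic.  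The same problem silently undermines your disjointness step: you invoke Bounded Geodesic Image to deactivate nested subsurfaces, but \thmref{BGIT} applies when the projection of the path to the larger subsurface \emph{is} a geodesic, which is exactly what is not given.  This is precisely the gap that the paper's coarse differentiation machinery (\thmref{Differentiable} together with \lemref{Close-In-Hyp} and \propref{Efficient-to-Hyperbolic}) is built to close: one first passes to a sub-box on which $f$ is $\ep$-\emph{efficient}---a quantitative reverse triangle inequality, \defref{Efficient}---and only for efficient maps does one get the Morse-type conclusion that lines go to geodesics up to an $O(\ep R)$ error.  Your proof has no mechanism for producing that reverse inequality, and without it the shadowing by geodesics, and hence the realization of an approximating standard flat, does not follow.

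Two further, secondary issues.  First, you conflate $\calC(Y_i)$ with $\X(Y_i)$: the product-region theorem identifies $\X_\balpha$ with $\prod_W \X(W)$, not with $\prod_W \calC(W)$, and a standard flat is a product of preferred paths in the model spaces $\X(W)$, whose defining property is that the projection to \emph{every} subsurface of $W$ is a quasi-geodesic (\defref{Preferred}); controlling only the top-level projections $\pi_{Y_i}$ to $\calC(Y_i)$ is far from sufficient, and this is why the paper proceeds by induction on complexity in \thmref{Standard-Flat}, repeatedly peeling off one curve and reapplying the efficiency hypothesis at lower complexity (via part (4) of \lemref{Easy}).  Second, what you label ``the main obstacle''---coordinating the shadowings across factors while fitting the accumulated errors inside the $O(\ep_0 R')$ budget---and then defer to ``the axiomatic framework'' is in fact the bulk of the paper (the BBF embedding, \thmref{Black-Box}, \propref{SomeBodyClose}, the fellow-traveling results \propref{fellowtraveling1}--\propref{fellowtraveling2}, and the inductive \thmref{Homework}); it is not something that follows formally from the combinatorial setup you describe.

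By contrast, the observation that the number of $\ep_0$-active subsurfaces on a sub-box is uniformly bounded (by covering the box with a bounded net and applying the distance formula pairwise) is a reasonable one, and it echoes a genuine feature of the geometry; but it does not produce the stabilization you claim, nor does it substitute for efficiency.  The missing idea, in short, is coarse differentiation, and no amount of pigeonholing on the distance formula alone will replace it.
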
  

As a corollary, we determine the large scale rank of the space $\X$.
Define the \emph{topological rank} of $\X$, $\rank_{top}(\X)$, to be largest 
integer $r$ so that there are pairwise disjoint \emph{essential} subsurfaces 
$W_1, \ldots, W_r$ in $S$.  (In all cases,  a thrice-punctured
sphere $W$ is considered inessential. Also, as we shall see, when $\X$ is 
quasi-isometric to $(\T(S), d_{W\!p})$, annuli are also considered 
inessential.)  The dimension of a standard flat in $\X$ is at most  
 $\rank_{top}(\X)$.

\begin{introthm}[Geometric rank] \label{Thm:Rank}
For every $K$ and $C$, there is a constant $R_2$ so that if $B$ is a box of
size at least $R_2$ in $\R^n$ and $f \from B \to \X$ is a $(K,C)$--quasi-isometric
embedding, then $n \leq \rank_{top}(\X)$. Furthermore, for $n=\rank_{top}(\X)$, 
there is a quasi-isometric embedding of a Euclidean $n$ dimensional half space  into $\X$. 
\end{introthm}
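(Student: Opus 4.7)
Both halves of the theorem are deduced from Theorem A, which has done the hard work.

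\emph{Upper bound.} Given $K,C$, choose $\epsilon_0$ small compared to $1/K$ and $R_0$ large compared to $KC$, let $R_1$ be the constant produced by Theorem A for these choices, and set $R_2=R_1$. Since every $(K,C)$-quasi-isometric embedding is in particular $(K,C)$-quasi-Lipschitz, Theorem A applied to $f\from B\to\X$ produces a sub-box $B'$ of size $R'\geq R_0$ such that $f(B')$ lies in the $O(\epsilon_0 R')$-neighborhood of a standard flat $F\subset\X$ of some dimension $d$. The key step is to convert this into $n\leq d$ by a volume/packing argument: take an $s$-separated grid $N\subset B'$ for some $s$ depending only on $K$ and $C$; then $|N|\gtrsim (R'/s)^n$, the image $f(N)$ is $1$-separated in $\X$, and nearest-point projection embeds $f(N)$ into $F$ with uniform control, provided $\epsilon_0$ is small compared to $1/K$. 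Since $F$ is coarsely a product of quasi-geodesics, it is quasi-isometric to a subset of $\R^d$, so an $R'$-ball in $F$ contains at most $O((R')^d)$ well-separated points. Comparing, $(R'/s)^n \lesssim (R')^d$, which for $R'$ large forces $n\leq d$; since any standard flat has dimension at most $\rank_{top}(\X)$, we conclude $n\leq\rank_{top}(\X)$.

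\emph{Lower bound.} Let $r=\rank_{top}(\X)$ and choose pairwise disjoint essential subsurfaces $W_1,\dots,W_r\subset S$ realizing this rank. For each $W_i$ I would select a preferred path $\gamma_i$: a bi-infinite Dehn-twist line when $W_i$ is an annulus, and otherwise a geodesic ray in the relevant metric on $\T(W_i)$ or $\Mod(W_i)$ (for instance along a pseudo-Anosov axis). The product $\gamma_1\times\cdots\times\gamma_r$ is then a standard flat, and in each of the three models for $\X$ the product-region structure ensures that the product map is a quasi-isometric embedding with constants depending only on the topology of $S$. The domain is a product of lines with at most one ray factor, hence is quasi-isometric to the $r$-dimensional half-space $\R^{r-1}\times\R_+$.

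The principal obstacle is ensuring that the $O(\epsilon_0 R')$-error in Theorem A stays small compared to the $(1/K)R'$-separation guaranteed by the QI-embedding hypothesis; this is the only moment in the argument where that lower bound (as opposed to merely the quasi-Lipschitz upper bound used in Theorem A) is invoked. Beyond that, the upper bound is a packing estimate in $\R^d$ and the lower bound reduces to invoking the product-region quasi-isometries established earlier in the paper.
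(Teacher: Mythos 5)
Your overall architecture---deduce the upper bound from Theorem~\ref{Thm:Intro-Standard-Flat} by a packing count against the standard flat, and the lower bound from a product of paths in disjoint subsurfaces via the product-region quasi-isometry---is the same as the paper's, but each half as written has a flaw that the paper's proof is specifically built around. For the upper bound, the step ``nearest-point projection embeds $f(N)$ into $F$ with uniform control'' fails for a net at a scale $s$ depending only on $K$ and $C$. The error in Theorem~\ref{Thm:Intro-Standard-Flat} is \emph{linear} in $R'$: each point of $f(B')$ is only within $O(\ep_0 R')$ of the flat, so projection to $F$ can move two points of $f(N)$ that are a bounded distance apart onto the same point, no matter how small $\ep_0$ is relative to $1/K$; your comparison $(R'/s)^n\lesssim (R')^d$ therefore never gets started. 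The repair is to take the net at the scale of the error, i.e.\ $K_1\ep_0 R'$-separated points with $K_1$ large compared to the implied constants: the images are then still $\ep_0 R'$-separated in (a coarse copy of) $\R^d$ and lie in a ball of radius $O(R')$, and comparing $(1/(K_1\ep_0))^n$ with $O(1/\ep_0)^d$ forces $n\le d$ by taking $\ep_0$ small---not $R'$ large. This is exactly how the paper runs the count.

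For the lower bound, the claim that an annular factor contributes ``a bi-infinite Dehn-twist line'' is false in precisely the case that makes the half-space statement (rather than all of $\R^n$) necessary. When $\X=\AM(S)$, the annular complex $\C(A)$ is a horoball in $\HH^2$, which contains no bi-infinite geodesic: a Dehn-twist orbit is a horocycle, along which distance grows only logarithmically, so it is not a quasi-geodesic. One must instead use a ray into the cusp (a pinching ray). Moreover, for the \Teich metric $\rank_{top}(\X)=3{\sf g}+{\sf p}-3{\sf c}$ is realized only by the annuli of a pants decomposition, so \emph{every} factor is forced to be a ray, not ``at most one''; the domain is an orthant, which happens to be bi-Lipschitz to a half-space, so the conclusion survives, but not for the reason you give. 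The non-annular factors (pseudo-Anosov axes, or bi-infinite quasi-geodesics in the relevant $\X(W_i)$) and the appeal to the product-region quasi-isometry of \secref{Standard} are correct and match the paper.
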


Define the \emph{geometric rank} of $\X$, $\rank_{geo}(\X)$, to be largest 
integer $n$ so that there exists $K,C$ such that for any $R$, there is a $(K,C)$ quasi-isometric embedding 
$f$ of a ball $B\subset\R^n$ of radius $R$ into $\X$. Also, let ${\sf g}$ be the genus of $S$, 
${\sf p}$ be the number of punctures of $S$ and ${\sf c}$ be the number
of component of $S$.

\begin{introcor} \label{Cor:Top=Geo}
The topological and the geometric rank of $\X$ are equal. Namely,
if $\X$ is either $(\Map(S), d_W)$ or $\T(S), d_\T)$ then
\[
\rank_{geo}(\X) = \rank_{top}(\X) = 3{\sf g} + {\sf p} - 3 {\sf c},
\]
and if $\X$ is $(\T(S), d_{W\!P})$, then 
\[
\rank_{geo}(\X) = \rank_{top}(\X) = 
\left\lfloor \frac{3{\sf g} + {\sf p} - 2{\sf c}}{2} \right\rfloor.
\]
\end{introcor}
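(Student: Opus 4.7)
The plan is to prove the Corollary in two essentially independent steps: the equality $\rank_{geo}(\X) = \rank_{top}(\X)$ follows formally from \thmref{Rank}, while the explicit numerical values are purely combinatorial facts about the topology of $S$. For the equality of ranks, suppose $f \from B \to \X$ is a $(K,C)$--quasi-isometric embedding of an $n$-dimensional ball of radius $R \geq R_2$; then \thmref{Rank} gives $n \leq \rank_{top}(\X)$, and letting $R \to \infty$ yields $\rank_{geo}(\X) \leq \rank_{top}(\X)$. Conversely, the ``furthermore'' clause of \thmref{Rank} provides a quasi-isometric embedding $\R^n_{\geq 0} \hookrightarrow \X$ with $n = \rank_{top}(\X)$, and restricting this to arbitrarily large balls centered deep inside the half-space exhibits $\rank_{geo}(\X) \geq \rank_{top}(\X)$.

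For $\X = (\Map(S), d_W)$ or $(\T(S), d_\T)$, essential subsurfaces include annular neighborhoods of essential simple closed curves. A family of $r$ pairwise disjoint essential subsurfaces produces $r$ pairwise disjoint essential simple closed curves on $S$ (core curves in the annular case; any essential curve inside a non-annular piece), so $r$ is at most the size $3{\sf g} + {\sf p} - 3{\sf c}$ of a pants decomposition. Conversely, the annular neighborhoods of the curves of any pants decomposition realize this bound.

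For $\X = (\T(S), d_{W\!P})$, annuli and pairs of pants are inessential, so every essential subsurface $W$ satisfies $\xi(W) \geq 1$, equivalently $|\chi(W)| \geq 1$. Moreover, $|\chi(W)| = 1$ forces $W$ to be a one-holed torus (so $g(W) = 1$), while $g(W) = 0$ forces $|\chi(W)| \geq 2$, with equality attained by the four-holed sphere. A disjoint family $W_1, \ldots, W_k$ of such subsurfaces therefore satisfies two linear constraints: (i) an Euler characteristic bound $\sum_i |\chi(W_i)| \leq |\chi(S)| = 2{\sf g} + {\sf p} - 2{\sf c}$, by additivity of $\chi$ under the decomposition $S = (\bigcup_i W_i) \cup S'$ combined with the non-positivity of $\chi$ on the complement $S'$ (the $W_i$ being essential means $S'$ has no disk components); and (ii) a genus bound $\sum_i g(W_i) \leq {\sf g}$, by additivity of genus under cutting along the boundaries of the $W_i$. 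Maximizing $k$ subject to these constraints is a small linear program whose optimum is $(3{\sf g} + {\sf p} - 2{\sf c})/2$, attained by ${\sf g}$ one-holed tori plus $({\sf g} + {\sf p} - 2{\sf c})/2$ four-holed spheres; taking the floor gives the stated upper bound. The matching lower bound is realized by explicitly embedding such an extremal configuration, carried out componentwise for disconnected $S$.

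The main obstacle is the Weil--Petersson case. The Euler characteristic inequality alone gives only the weaker estimate $k \leq 2{\sf g} + {\sf p} - 2{\sf c}$, so the coupled genus inequality is genuinely needed to pin down the correct constant. The other delicate point is verifying that the LP-optimal configuration actually embeds in $S$, which is handled by induction on $\xi(S)$: at each stage, cut off a one-holed torus while the genus budget remains, then switch to four-holed spheres once the genus is exhausted, carefully tracking boundaries and punctures.
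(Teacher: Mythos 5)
The paper gives no standalone proof of this corollary: it is meant to follow immediately from \thmref{Rank} together with the elementary count of a maximal family of disjoint essential subsurfaces, which is exactly the structure of your argument. Your first step (deducing $\rank_{geo}=\rank_{top}$ from the two halves of \thmref{Rank}) is correct, and your combinatorial computation for $(\Map(S),d_W)$ and $(\T(S),d_\T)$ is the standard one, modulo the small point that the curve you choose inside a non-annular piece must be non-peripheral in that piece so that the resulting curves are pairwise non-homotopic. Your linear-programming treatment of the Weil--Petersson case, coupling the Euler characteristic bound with the genus bound $\sum_i \genus(W_i)\le {\sf g}$, is a correct and cleanly organized way to get the upper bound $k\le (3{\sf g}+{\sf p}-2{\sf c})/2$, and for connected $S$ the extremal configuration (${\sf g}$ one-holed tori plus four-holed spheres in the planar complement) does embed, so that case is complete.

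There is, however, a genuine gap in the disconnected Weil--Petersson case, where your upper and lower bounds do not meet. The topological rank of a disconnected surface is the sum over components of the component ranks, i.e.\ $\sum_i \lfloor (3{\sf g}_i+{\sf p}_i-2)/2\rfloor$, whereas your LP gives the global quantity $\lfloor (3{\sf g}+{\sf p}-2{\sf c})/2\rfloor$; these differ whenever two or more components have $3{\sf g}_i+{\sf p}_i-2$ odd. For instance, if $S$ is a disjoint union of two twice-punctured tori, each component supports only one essential (non-annular, non-pants) subsurface, so $\rank_{top}=2$, while $\lfloor (6+4-4)/2\rfloor=3$. Your claim that ``the matching lower bound is realized \dots componentwise'' therefore cannot produce the LP optimum in such cases, and the inductive embedding you sketch will stall one subsurface short. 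This is really an imprecision in the formula as stated in the corollary (which is sharp for connected $S$), but as written your proof asserts both bounds coincide, and that assertion is false in general; you should either restrict to connected $S$ or replace the right-hand side by the componentwise sum and redo the upper bound component by component.
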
 

\begin{remark}  In the case of $\Map(S)$ there are quasi-isometric embeddings of all of Euclidean space of dimension, the geometric rank into $\Map(S)$. In Theorem 1.3 of \cite{Bo} B.Bowditch proves that there is a quasi-isometric embedding of a Euclidean $n$-dimensional half-space into  $(\T(S),d_\T)$ if and only if $n\leq \rank_{top}(\T(S),d_\T)$.  In Theorem 1.4 he shows that there are quasi-isometric embeddings of the {\em entire} Euclidean space of dimension $\rank_{top}(\T(S),d_\T)$ into Teichm\"uller space if and only if the genus of  $S$ is at most $1$ or $S$ is a closed surface of genus $2$.   We are grateful to him for pointing out an error in a previous version of this paper.
\end{remark}
 
\begin{remark} \label{Rem:More-Complexes} 
\thmref{Intro-Standard-Flat} and \thmref{Rank}
hold for a larger class of metric  spaces than are discussed above. Essentially,
one needs a mapping class group action and a distance formula similar to 
\eqnref{Distance} (see Masur-Schleimer \cite{masur:DC} for examples of such distance 
formulas). The definition of an essential surface has to be modified to mean any 
type of surface that appears is the associated distance formula. 

For many such spaces, e.g, the arc complex and the disk complex are known 
to be Gromov hyperbolic and therefore have geometric rank one 
\cite{masur:DC}, hence the corollary is already known.  Others, such as the 
Hatcher-Thurston complex and the separating curve complex, 
are not Gromov hyperbolic and our discussion applies to prove the
geometric rank and topological rank are equal. These complexes have been  
omitted to simplify the exposition. 
\end{remark}

\subsection*{History}
The idea of studying the rank of these objects was introduced by Brock-Farb 
\cite{brock:CR}. In the case when $\X$ is the pants graph \corref{Top=Geo} 
was first proven in that paper when the surface is the twice punctured torus.  
They also showed that the topological rank is always at most as large as the 
geometric rank and  conjectured \corref{Top=Geo} for all genera. \corref{Top=Geo}
was then proven for all genera in the case when $\X$ is quasi-isometric to the 
mapping class group with the word metric or \Teich space with the Weil-Petersson 
metric by Behrstock-Minsky and Hamenst\"adt \cite{minsky:DR, ham:QR}. 
The rank statement is used to prove the quasi-isometric rigidity of $\Mod(S)$
by Behrstock-Kleiner-Minsky-Mosher in \cite{minsky:MR} and by Hamenst\"adt
in \cite{ham:QR}.
The case of  \Teich space with the \Teich metric had not been studied previously.  
Note that the map that sends $\Mod(S)$ to the orbit of a point in $\T(S)$  is 
not a quasi-isometry or even a quasi-isometric embedding because of the thin regions  in $\T(S)$ which locally look like products of horoballs. 
Unlike \cite{minsky:DR} which uses  asymptotic cones, our approach as outlined below is to study the local behavior  of a quasi-Lipschitz maps. 

\subsection*{Main tools}
To prove our theorems we develop further  some tools that already exist
in the literature. The first one is the  idea of coarse differentiation.
This was introduced in the context of geometric group theory by Eskin-Fisher-Whyte \cite{eskin:CDI, eskin:CDII} (see references in that paper for its use in other contexts)  and  
used to prove quasi-isometric rigidity of lattices in Sol and in the quasi-isometry 
classification of lamplighter groups. The statement they used is similar to 
\thmref{Coarse} below which holds for quasi-Lipschitz maps between more general 
metric spaces. However, since we are mostly concerned with maps where the 
domain is a subset of $\R^n$, we prove the following statement which is cleaner 
and easier to use. 
 
\begin{introthm}[Coarse Differentiation] \label{Thm:Diff}
For every $\ep_0$ and $R_0$ there is $R_1$ so that for $R \geq R_1$ 
the following holds. Let $f \from B \to \calY$ be a quasi-Lipschitz map 
where $B$ is a box of size $R$ in $\R^n$. Then, there is a box 
$B' \subset B$ of size $R'\gmul R_0$ so that $f$ restricted to $B'$ is 
$\ep_0$--efficient on scale $R'$. 
\end{introthm}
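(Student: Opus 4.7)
The plan is to follow the coarse differentiation strategy of Eskin--Fisher--Whyte \cite{eskin:CDI}, which specializes cleanly when the domain is already a Euclidean box. The core input is a single monotone quantity measured at many dyadic scales and collapsed by pigeonhole.

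First I attach to each axis-parallel segment $\sigma \subset B$ and each dyadic scale $r$ dividing $|\sigma|$ the coarsened length
\[
 L(\sigma;r) = \sum_j d\bigl(f(p_j), f(p_{j+1})\bigr),
\]
where the $p_j$ subdivide $\sigma$ at spacing $r$. The triangle inequality gives monotonicity $L(\sigma;r) \le L(\sigma;r/2)$, and the quasi-Lipschitz estimate $d(f(x),f(y)) \le K\, d(x,y) + C$ gives the upper bound $L(\sigma;r) \le K|\sigma| + (|\sigma|/r)C$. Hence for each ``global fiber'' of $B$ of length $R$ the total increase of $L$ across all dyadic scales $R = r_0 > r_1 > \cdots > r_N \ge R_0$ is bounded by $KR + (R/R_0)C$.

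Next I run a multi-scale pigeonhole. Summing the nonnegative refinement defects $L(\sigma;r_{k+1}) - L(\sigma;r_k)$ over all $k$ and over all sub-fibers of length $r_k$ in the tiling of $B$ yields a total bounded by $n(R/R_0)^{n-1}\bigl(KR + (R/R_0)C\bigr)$. Averaging over the $N \approx \log_2(R/R_0)$ scales picks out some scale $r_{k_\ast}$ at which the per-sub-fiber average defect is at most $r_{k_\ast}(K + C/R_0)/N$. A Markov-type average over the sub-boxes of side $r_{k_\ast}$ then selects a sub-box $B' \subset B$ of size $R' = r_{k_\ast}$ whose total directional defect is a bounded multiple of this average, provided $R_1$ is chosen so that $N = \lfloor \log_2(R_1/R_0) \rfloor$ dominates $(K + C/R_0)/\ep_0$ up to dimensional constants. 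The resulting inequality $L(\sigma;r_{k_\ast+1}) - L(\sigma;r_{k_\ast}) \le \ep_0 R'$ is the quantitative near-additivity that $\ep_0$-efficiency on scale $R'$ records.

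The main obstacle is combinatorial: we need a single sub-box on which $f$ is near-efficient simultaneously along \emph{all} coordinate directions, not merely on the average sub-fiber. Summing directional defects before the Markov extraction is legitimate since each is nonnegative, but one must either permit a small exceptional fraction of fibers in the definition of efficiency, or pass to a slightly smaller sub-sub-box of $B'$ to excise the bad fibers, at the cost of an additional dimensional factor when choosing $R_1$. Once this is absorbed, the final $R_1$ depends on $\ep_0$, $R_0$, $K$, $C$ and $n$, as expected.
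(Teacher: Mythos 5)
Your multi-scale pigeonhole is indeed the engine of the paper's proof (Theorem~\ref{Thm:Coarse} uses exactly the monotone coarsened length, the quasi-Lipschitz upper bound, and an average over scales to find a scale where most grid segments are efficient). But there is a genuine gap in what you conclude from it: you only ever control \emph{axis-parallel} segments, whereas Definition~\ref{Def:size} requires $f\circ\gamma$ to be $\ep_0$--efficient for \emph{every} geodesic $\gamma$ in the sub-box, in every direction. This is not a technicality: efficiency along the coordinate directions does not imply efficiency along arbitrary directions. For instance, the fold $f(x,y)=(x,y)$ for $x\ge y$ and $f(x,y)=(y,x)$ for $x<y$, into $\R^2$ with the $L^1$ metric, is efficient on every horizontal and vertical line but collapses anti-diagonal segments onto themselves, so they backtrack maximally. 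All later uses of the theorem (e.g.\ Proposition~\ref{Prop:Efficient-to-Hyperbolic} and Theorem~\ref{Thm:Homework}) apply efficiency to segments $[p,q]$ between arbitrary points of the box, so the coordinate-direction version is not enough. The paper handles this by running the scale argument simultaneously for a family $\calF$ of lines in a finite set of directions that is $\ep_0^2$--dense in $S^{n-1}$, and then proving a separate approximation claim (via parts (2) and (4) of Lemma~\ref{Lem:Easy}) that a non-efficient geodesic in an arbitrary direction forces nearby grid segments in the closest net direction to be non-efficient; a counting argument over directions and boxes then closes the loop. You would need to add both the dense net of directions and this approximation step.

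Two smaller points. First, your proposed fix of ``excising the bad fibers by passing to a sub-sub-box'' does not obviously work: the bad fibers need not be confined to a region you can cut away, and a sub-box still meets fibers from all over. The paper instead shows that a single bad geodesic inside a box forces a definite proportion ($\emul\ep^{n+2}$) of the grid segments through that box to be bad, and chooses $\theta\ll\theta_0\,\ep^{n+2}$ so that most boxes contain no bad geodesic at all. Second, with dyadic scales the one-step defect bound $L(\sigma;r/2)-L(\sigma;r)\le\ep_0 R'$ is not the efficiency inequality, which compares the endpoint distance to the coarse length at spacing $\ep_0 R'$, not $R'/2$; the paper takes consecutive scales in ratio $1/\ep$ precisely so that one refinement step is the required comparison. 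This is repairable by telescoping $\log_2(1/\ep_0)$ dyadic steps, but it must be said.
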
 

Here, efficient means that the image of  every line in the box  satisfies a reverse triangle inequality  
up to a small multiplicative error. That is; lines are mapped to lines up to a
\emph{sub-linear error}. One should think of the above theorem
as a coarse version of the Rademacher's theorem that 
if  $f:\R^n\rightarrow \R^m$ is lipschitz, then $f$ is 
differentiable almost everywhere.
In \thmref{Diff} the sub-box $B'$ is an analogue of a point of differentiability. 

The importance of efficiency lies first of all in the fact that in a product space the projection of an efficient path to a factor is still efficient. The corresponding statement for quasi-geodesics is false.  
Furthermore efficient maps into hyperbolic 
spaces are easy to control (\propref{Efficient-to-Hyperbolic}). 
We also use the construction of Bestvina-Bromberg-Fujiwara 
\cite{bestvina:GQ}. They embed the mapping 
class group into a product of finitely many hyperbolic spaces. 
Their  construction is axiomatic and can be adopted easily to embed
any of our spaces $\X$ into a product of finitely many hyperbolic spaces. 
The $L^1$--metric on this product induces a metric on the space
$\X$ and this is the metric with respect to which we apply the coarse
differentiation theorem. Note that, the notion of efficiency is not preserved
under quasi-isometry and the choice of metric here is essential. 
The conclusion of this discussion, Theorem~\ref{Thm:Black-Box}  will be that efficient paths  fellow travel paths with the same endpoints that have nice properties.  These latter paths which we call preferred paths  will play the role of geodesics.

The power then of  Theorem~\ref{Thm:Diff}  lies in the fact that one can add the assumption
of efficiency for free, just by replacing $B$ with a sub-box $B'$. Altogether this will mean on large boxes the image of every line fellow travels a preferred path.

Finally, we use the realization theorem of Behrstock-Minsky-Mosher-Kleiner
\cite{minsky:MR}. They provide a description of the image of 
the mapping class group in the product of curve complexes. 
We adopt it to provide the description of the image of $\X$. 
This is necessary to translate back the information obtained in each 
hyperbolic factor to information in $\X$. 

\subsection*{Outline of the paper}

Section~\ref{Sec:Coarse} is devoted to the development of coarse differentiation
theory and to the discussion of efficient maps. The main result is 
\thmref{Differentiable} as discussed above. We also establish the basic
properties of efficient maps and prove that efficient paths in Gromov hyperbolic 
spaces stay close to geodesics; an analogue of the Morse Lemma. 

In \secref{Model}, we discuss the combinatorial model for each of the spaces 
considered in the paper. The three seemingly different metric spaces above 
have very similar models. Namely, \Teich space equipped with the Weil-Petersson 
metric is quasi-isometric to the pants graph \cite{brock:WV}. The mapping class
group is quasi-isometric to the marking graph \cite{minsky:CCII} by work of Masur-Minsky 
and \Teich space equipped with the \Teich metric is quasi-isometric
to the space of augmented markings by work of Rafi and Durham. \cite{rafi:HT, durham:AM}. 

The advantage of this approach is that we can measure relative 
complexity of two points $x,y \in \X$ from the point of view of 
a subsurface $W$. This is the distance in the curve complex of $W$ between
the projections of $x$ and $y$ to $W$. The curve complex of every
surface $W$ is known to be Gromov hyperbolic. We then define a
coarse metric on each of these combinatorial models using 
a distance formula which is the sum over relative complexity 
from the point of view of different subsurfaces. Since we work in 
the category of spaces up to quasi-isometries, the distance needs to be defined
only up a multiplicative error. 

In Section~\ref{Sec:Efficient} we introduce the notion of preferred paths.
These are paths whose projections to every curve complex is a quasi-geodesic
and they replace the notion of geodesics in our spaces. 
The main statement in the section is \thmref{Black-Box} 
which shows that an efficient path stays near a preferred path joining its 
endpoints. Hence, the outcome of the Coarse Differentiation Theorem is
indeed a box where straight lines are mapped to straight lines up 
to the first order. This is the key tool for the rest of the paper.
The proof uses the construction in \cite{bestvina:GQ} which allows one to 
embed $\X$ into a product of hyperbolic spaces.  The projection of the 
efficient paths into each factor stays near a geodesic in that factor. We then
use this and consistency theorem (\thmref{Consistency}) 
to build the preferred path in $\X$ tracing the given efficient path. 

Section~\ref{Sec:Preferredprop} establishes some properties of preferred paths. 
The main ones are fellow traveling properties that say that under certain 
conditions, preferred paths that begin and end near the same point fellow travel 
in the middle (\propref{fellowtraveling1} and \propref{fellowtraveling2}).
These statements are used in the succeeding sections to build big 
boxes with the required properties.  In \secref{Local} the main inductive
step is proven (\thmref{Homework}) and in \secref{Proof} we assemble the 
proofs of the main theorems.

\subsection*{Treatment of constants} 
Suppose that $\calY$ and $\calZ$ are geodesic metric spaces. We say 
a map $f \from \calY \to \calZ$ is quasi-Lipschitz if there are
constants $K$ and $C$ so that 
$$
 d_\calZ \big( f(x_1),f(x_2) \big) \le K d_\calY(x_1,x_2) +C.
$$
We fix constants $K$ and $C$ once and for all.
We also fix an upper-bound for the complexity of the surface $S$
and the dimension $n$. When we say a constant is uniform, we mean
its value depends only on $K$, $C$, the topology of $S$ and the value of $n$
only. Similarly, we will use terms like quasi-isometric embedding or
quasi-isometry to mean that the associated constants are the same
as $K$ and $C$ fixed above. 

To simplify presentation, we try to avoid naming uniform constants whenever
possible. Instead, we adopt 
the following notations. Let $\ga$ and $\gb$ represent various quantities 
and let $M$ and $C'$ be uniform constants. We say \emph{$\ga$ is less than $\gb$ 
up to a multiplicative error}, $\ga \lmul \gb$, if $\ga \le M \, \gb$. 
We say $\ga$ and $\gb$ are \emph{comparable},
$\ga \emul \gb$, if we have both $\ga \lmul \gb$ 
and $\gb \lmul \ga$.
 
Using the similar notation when the error is additive or both additive
and multiplicative,  we say $\ga \ladd \gb$ if $\ga \le \gb+C'$
and $\ga \prec \gb$ if $\ga \leq M \ga + C'$. Again, $\ga \eadd \gb$ if we have 
both $\ga \ladd \gb$ and $\gb \ladd \ga$ and $\ga \asymp \gb$ if we have both 
$\ga \prec \gb$ and $\gb \prec \ga$. Also, we often use the notation 
$\ga = O(\gb)$ to mean $\ga \lmul \gb$. For example
$$
\ga \ladd \gb + O(\gc)  \quad \Longleftrightarrow \quad
\ga \leq \gb + M \gc + C',
$$
for uniform constants $M$ and $C'$. 

Using this notation we may write 
\[
\ga \gadd \gb \quad \text{and} \quad \gb \gadd \gc
\quad\Longrightarrow \quad \ga \gadd \gc. 
\]
Here, the additive error in the last inequality is the sum of the additive 
errors in the first two inequalities and hence is still a uniform constant. 
That is, different occurrences of $\gadd$ have different implied constants. 
But as long as we use statements of this type a uniformly bounded number of
times, all the implied constant are still uniform.

\section{Coarse Differentiation} \label{Sec:Coarse}
Being differentiable means that, to first order, lines are mapped to lines
and points along a line satisfy the reserves triangle inequality. 
We emulate these concepts by introducing the notion of an $\ep$--efficient 
paths where the points along this path satisfy the reverse triangle inequality 
up to a small multiplicative error. 

\begin{definition} \label{Def:Efficient}
Let $\calZ$ be a metric space, $\gamma \from [a,b] \to \calZ$ be 
quasi-Lipschitz and $R>0$ be a scale so that $|b-a| \lmul R$. 
An $r$--partition of $[a,b]$ is a set of times 
$a=t_0 < t_1 < \ldots < t_m=b$ so that $(t_{i+1}-t_i)\le r$. 
Let $z_i = \gamma(t_i)$. We define the \emph{coarse length of 
$\gamma$ on the scale $r$} to be
\begin{equation} \label{Eq:Efficient}
\Delta(\gamma, r)=
   \min_{\text{$r$--partitions}} \sum_{i=0}^{m-1} d_\calZ (z_i,z_{i+1}).
\end{equation}
We say $\gamma$ is $\ep$--efficient on the scale $R$ if
\begin{equation}
\label{Eq:weak}
\Delta(\gamma,\ep R) \leq d_\calZ\big(\gamma(a),\gamma(b)\big) + O(\ep R).
\end{equation}
\end{definition}

We establish some elementary properties of efficient paths.  
\begin{lemma} \label{Lem:Easy}
Consider a map $\gamma \from [a,b] \to \calZ$.
\begin{enumerate}
\item Suppose $\gamma$ is $\ep$--efficient on scale $R$ and $k$ is a uniformly bounded
integer.  Then for $k$ points $a\leq s_i\leq b$
\[
\sum_{i=0}^{k-1} d_\calZ\big(\gamma(s_i),\gamma(s_{i+1})\big)
=  d_\calZ\big(\gamma(a),\gamma(b)\big) + O(\ep R). 
\]
\item For $[c,d] \subset [a,b]$, if $\gamma$ is $\ep$--efficient at scale $R$
so is $\gamma'=\gamma|_{[c,d]}$.
\item Assume $\calZ = \calZ_1 \times \ldots \times \calZ_l$ equipped with the 
$L^1$--metric and let $\gamma_i$ be the projection of $\gamma$ to $\calZ_i$. 
Then, if $\gamma$ is $\ep$--efficient at scale $R$ so is every $\gamma_i$. 
\item If $\calZ'$ is a subset of $\calZ$ and $\gamma$ is an $\ep^2$--efficient
path that is contained in an $O(\ep^2 R)$--neighborhood of $\calZ'$ then
the closest point projection of $\gamma$ to $\calZ'$ is an $\ep$--efficient path.
\end{enumerate}
\end{lemma}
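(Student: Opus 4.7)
The plan is to handle all four parts by refinement arguments based on the optimal partition witnessing efficiency. The core quantitative input is that inserting one point into an $\epsilon R$-partition changes the total sum by at most $O(\epsilon R)$: by the triangle inequality refinement can only increase the sum, and by the quasi-Lipschitz hypothesis each new sub-distance involves points within $\epsilon R$ of each other and so is bounded by $K\epsilon R + C = O(\epsilon R)$.

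For (1) and (2), start with the optimal $\epsilon R$-partition $P^*$ witnessing efficiency, so that $\sum_{P^*} d_\calZ(\gamma(t_j), \gamma(t_{j+1})) \le d_\calZ(\gamma(a), \gamma(b)) + O(\epsilon R)$. For (1), refine $P^*$ by inserting the $k = O(1)$ points $s_i$; the refined sum remains $d_\calZ(\gamma(a), \gamma(b)) + O(\epsilon R)$, and the partial sum $\sum_i d_\calZ(\gamma(s_i), \gamma(s_{i+1}))$ is dominated by this refined sum by a further triangle inequality (collapsing each block between consecutive $s_i$), while the triangle inequality also supplies the matching lower bound $d_\calZ(\gamma(a), \gamma(b))$. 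For (2), insert $\{c, d\}$ into $P^*$ and split the refined partition into three blocks on $[a, c]$, $[c, d]$, $[d, b]$; bounding the two outer blocks from below by triangle inequality and subtracting leaves the middle block, which is an $\epsilon R$-partition of $\gamma|_{[c,d]}$ with sum at most $d_\calZ(\gamma(c), \gamma(d)) + O(\epsilon R)$.

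For (3), the $L^1$ structure on $\calZ = \calZ_1 \times \cdots \times \calZ_l$ gives the identity $\sum_P d_\calZ = \sum_k \sum_P d_{\calZ_k}$ for every partition $P$. Applying this to an optimal $\epsilon R$-partition of $\gamma$ and combining with $d_\calZ(\gamma(a), \gamma(b)) = \sum_k d_{\calZ_k}(\gamma_k(a), \gamma_k(b))$, the per-coordinate triangle inequality forces $\sum_P d_{\calZ_k} \ge d_{\calZ_k}(\gamma_k(a), \gamma_k(b))$; since the total excess is $O(\epsilon R)$, the excess in each factor is also $O(\epsilon R)$, giving $\epsilon$-efficiency of every projection $\gamma_k$.

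The main obstacle is (4), which requires carefully balancing partition fineness against neighborhood width. First observe that $\Delta(\gamma, r)$ is non-increasing in $r$ (a finer partition is automatically a coarser one), so $\epsilon^2$-efficiency at scale $R$ upgrades for free to $\epsilon$-efficiency at scale $R$. Take an optimal $\epsilon R$-partition $P^*$ for $\gamma$; since refinement only increases the sum, iteratively removing points whose removal keeps $P^*$ an $\epsilon R$-partition lets us assume $|P^*| \lmul 1/\epsilon$ (using $|b-a| \lmul R$). Now substitute $\gamma'(t_j) \in \calZ'$ for each $\gamma(t_j)$; by hypothesis each substitution shifts by $O(\epsilon^2 R)$, so each sub-distance changes by $O(\epsilon^2 R)$. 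Summing over the $\lmul 1/\epsilon$ sub-intervals gives total error $O(\epsilon R)$, matching the efficiency requirement at scale $\epsilon$; the same estimate controls the endpoint shift between $d_\calZ(\gamma(a), \gamma(b))$ and $d_\calZ(\gamma'(a), \gamma'(b))$, yielding the bound $\Delta(\gamma', \epsilon R) \le d_\calZ(\gamma'(a), \gamma'(b)) + O(\epsilon R)$.
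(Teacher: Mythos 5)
Your proposal is correct and follows essentially the same route as the paper: in each part you take the optimal partition witnessing efficiency, refine or prune it, and control the resulting change with the triangle inequality and the quasi-Lipschitz bound, which is exactly the paper's argument. The only cosmetic difference is in part (4), where you prune the optimal $\ep R$--partition down to $\lmul 1/\ep$ points while the paper sub-samples its optimal $\ep^2 R$--partition at spacing $\emul \ep R$ --- the same maneuver --- so there is nothing to add.
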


\begin{proof}
Let $t_0, \ldots, t_m$ be an $\ep R$--partition of $[a,b]$
achieving the minimum in the definition of $\Delta(\gamma,\ep R)$. 
Add the points $s_j$
to the partition.  This 
 will increase the sum in \eqnref{Efficient} by at most 
$O(\ep R)$. In fact, if $t_i \leq s_j \leq t_{i+1}$ then
$|t_i - t_{i+1}| \leq\ep R$ and, since $\gamma$ is quasi-Lipschitz,
\[
d_\calZ\big(\gamma(t_i), \gamma(s_j)\big) 
  + d_\calZ\big(\gamma(s_j), \gamma(t_{i+1})\big) =O(\ep R). 
\]
Since the number of points $s_j$ is uniformly bounded, adding all times
$s_j$ to the partition will increase the sum by at most $O(\ep R)$. 
Now, removing all $t_i$, will only decrease the sum and hence
 part one of the lemma holds. 

To see the second part, as above, let $t_0, \ldots, t_m$ be a set of 
times where the sum in \eqnref{Efficient} is nearly minimal and so that
the times $c$ and $d$ are included in the set $\{t_i\}$. Let $z_i=\gamma(t_i)$.
Letting $c=t_j$ and $d=t_k$ we have 
\begin{align*}
\Delta(\gamma, \ep R) 
&\geq \sum_{i=1}^m d_\calZ(z_i, z_{i+1}) - O(\ep R) \\
  & \geq d_\calZ\big(\gamma(a), \gamma(c)\big) + 
    \sum_{i=j}^{k-1} d_\calZ(z_i, z_{i+1}) +
    d_\calZ\big(\gamma(d), \gamma(b)\big) - O(\ep R) \\
  & \geq d_\calZ\big(\gamma(a), \gamma(c)\big) + \Delta(\gamma', \ep R) +
     d_\calZ\big(\gamma(d), \gamma(b)\big) - O(\ep R).
\end{align*}
Also, by definition, 
\[
\Delta(\gamma, \ep R) 
\leq d_\calZ\big(\gamma(a),\gamma(b)\big) + O(\ep R).
\]
Hence, 
\begin{align*}
\Delta(\gamma', \ep R) &\leq 
 d_\calZ\big(\gamma(a),\gamma(b)\big)
 - d_\calZ\big(\gamma(a),\gamma(c)\big)
 - d_\calZ\big(\gamma(d),\gamma(b)\big) + O(\ep R)\\
 & \leq  d_\calZ\big(\gamma(c),\gamma(d)\big) + O(\ep R).
\end{align*}
This finishes the proof of part 2. 

We prove the third part for $l=2$. The general case is similar. 
Consider the partition $t_0, \ldots, t_m$ that achieves the minimum
for $\Delta(\gamma, \ep R)$. Since $\calZ$ is equipped
with the $L^1$--metric, we have
\begin{equation} \label{Eq:Z1Z2}
\sum_{i=1}^m d_{\calZ_1}(z_i, z_{i+1}) + d_{\calZ_2}(z_i, z_{i+1})
\leq  d_{\calZ_1}\big(\gamma(a), \gamma(b) \big) 
+ d_{\calZ_2}\big(\gamma(a), \gamma(b) \big) + O(\ep R). 
\end{equation}
But, by triangle inequality, we have
\begin{equation} \label{Eq:Z2}
\sum_{i=1}^m d_{\calZ_2}(z_i, z_{i+1})
\geq  d_{\calZ_2}\big(\gamma(a), \gamma(b) \big). 
\end{equation}
Subtracting \eqnref{Z2} from \eqnref{Z1Z2} we obtain
\begin{equation*}
\Delta(\gamma_1, \ep R) \leq \sum_{i=1}^m d_{\calZ_1}(z_i, z_{i+1}) 
\leq d_{\calZ_1}\big(\gamma(a), \gamma(b)\big)  + O(\ep R). 
\end{equation*}

To see the last part, again let $t_1, \ldots, t_m$ be the optimal 
subdivision (note that, in this case, $\gamma$ is  $\ep^2$--efficient). 
Choose a sub-partition $s_1, \ldots, s_l$ so that
\[
|s_{i+1} - s_i| \emul \ep R,
\]
Then, $l \lmul \frac 1 \ep$. Also, let $\gamma'$ be the path
obtain from composing $\gamma$ with the closest point projection to 
$\calZ'$. Let $z_i = \gamma(s_i)$ and $z_i '= \gamma' (s_i)$. 
Then
\begin{align*}
\Delta(\gamma', \ep R) 
& \leq \sum_{i=1}^l d_{\calZ'}(z_i', z_{i+1}' ) \\
& \leq \sum_{i=1}^l d_{\calZ}(z_i', z_i) +
   d_{\calZ}(z_i, z_{i+1} )  + d_{\calZ}(z_{i+1}, z_{i+1}' ) \\
& \leq\Delta(\gamma, \ep R) +  l \cdot O(\ep^2 R) \\
&  \leq \Delta(\gamma, \ep^2 R) +  O(\ep R) 
\leq d_{\calZ'} (z_1', z_l') + O(\ep R).
\end{align*}
In the last inequality, we used the fact that the pairs $z_1, z_1'$ and
$z_l, z_l'$ are $\ep^2 R$--close. This finishes the proof. 
\end{proof}

\begin{definition} \label{Def:size}
A \emph{box} in $\R^n$ is a product of intervals, namely
$B= \prod_{i=1}^n I_i$, where $I_i$ is an interval in $\R$. We say
a box $B$ is \emph{of size $R$} if for every $i$,  
$|I_i| \gmul R$ and if the diameter of $B$ is less than $R$. 
Note that if $B$ is of size $R$ and of size $R'$, then $R \emul R'$. 

A map $f \from B \to \calZ$ from a box of size $R$ in $\R^n$ to a metric space 
$\calZ$ is called \emph{$\ep$--efficient} if, for any geodesic $\gamma \from [a,b] \to B$,  
the path $f\circ \gamma$ is $\ep$--efficient at scales $R$.
\end{definition}

Let $B$ be a box of size $L$ in $\R^n$ and let $\underline B$ be
a central sub box of $B$ with comparable diameter (say a half). 
For any constant $0<R<L$, let $\calB_R$ be a subdivision $\underline B$ to 
boxes of size $R$. That is, 
\begin{enumerate}
\item boxes in $\calB_R$ are of size $R$, 
\item they are contained in $\underline B$ and hence their distance
to the boundary of $B$ is comparable to $L$, 
\item they have disjoint interiors and 
\item their union is $\underline B$. 
\end{enumerate}

For any metric space $\calZ$, we prove that any quasi-Lipschitz maps
from $B$ to $\calZ$ is coarsely differentiable almost everywhere 
in a central box of comparable size: 
 
\begin{theorem}[Coarse Differentiation]  \label{Thm:Differentiable}
For every $\ep_0$, $\theta_0$ and $R_0$ there is $L_0$ so that the following 
holds. For $L \geq L_0$, let $f \from B \to \calZ$ be a quasi-Lipschitz map 
where $B$ is a box of size $L$ in $\R^n$. Then there is a scale $R \geq R_0$
so that the proportion of boxes $B' \in \calB_R$ where $f|_{B'}$ is
$\ep_0$--efficient is at least $(1-\theta_0)$. 
\end{theorem}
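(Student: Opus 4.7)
The plan is to adapt the multiscale pigeonhole method of Eskin--Fisher--Whyte: define an additive \emph{defect} functional for $f$ along line segments, show that this defect telescopes across nested scales to a total bounded linearly by the quasi-Lipschitz length of $f$, and then pigeonhole in the scale to extract one that is good on most sub-boxes.

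Fix a geometric sequence of scales $\rho_k = L/M^k$ for $k = 0, 1, \ldots, N$, where $M$ is an integer with $M \geq 1/\ep_0$ and $N$ is the largest integer with $\rho_N \geq R_0$; I may enlarge $R_0$ once so that $R_0 \gmul C/K$ in order to absorb the additive quasi-Lipschitz constant. Also fix a finite $\delta$-net $\calV \subset S^{n-1}$ of directions with $\delta \lmul \ep_0/K$. For each $v \in \calV$, each line $\sigma$ in $B$ parallel to $v$, and each $k$, subdivide $\sigma$ into consecutive subsegments $\sigma_i^{(k)} \from [a_i, b_i] \to \R^n$ of length $\rho_k$, and set
\[
D_k(\sigma) = \sum_i \Bigl[\, \Delta(f \circ \sigma_i^{(k)}, \ep_0 \rho_k) - d_\calZ\bigl(f(\sigma_i^{(k)}(a_i)),\, f(\sigma_i^{(k)}(b_i))\bigr)\,\Bigr] \geq 0.
\]
Because the chord partition of $\sigma_i^{(k)}$ at the finer scale $\rho_{k+1}$ is a valid $\ep_0 \rho_k$-partition (using $\rho_{k+1}/\rho_k = 1/M \leq \ep_0$), I obtain $D_k(\sigma) \leq \nu_{k+1}(\sigma) - \nu_k(\sigma)$, where $\nu_k(\sigma) = \sum_i d_\calZ(f(\sigma_i^{(k)}(a_i)), f(\sigma_i^{(k)}(b_i)))$. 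The sum telescopes to $\nu_N(\sigma) - \nu_0(\sigma)$, and the quasi-Lipschitz bound $\nu_N(\sigma) \lmul KL$ then gives $\sum_{k=0}^{N-1} D_k(\sigma) \lmul KL$.

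Integrating over the $(n{-}1)$-parameter family of $v$-parallel lines in the cross-section of $B$ and summing over $v \in \calV$, Fubini yields $\sum_{v, k} \int_\sigma D_k(\sigma)\, d\sigma \lmul |\calV|\, K L^n$, so pigeonholing in $k$ produces a single scale $k_\ast$ at which the total defect integral is at most $|\calV|\, K L^n / N$. To convert this into a count of bad sub-boxes I use the key \emph{stability lemma}: if some line $\sigma$ through a sub-box $B' \in \calB_{\rho_{k_\ast}}$ has a subsegment of defect exceeding $\ep_0 \rho_{k_\ast}$, then every parallel line $\sigma'$ at cross-sectional distance $\lmul \ep_0^2 \rho_{k_\ast}/K$ still has defect at least $\ep_0 \rho_{k_\ast}/2$ on its corresponding subsegment, because both terms defining the defect shift by at most $O(K \delta'/\ep_0)$ under a cross-sectional translation of size $\delta'$. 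Each bad sub-box therefore contributes at least $c\, \ep_0^{2n-1} \rho_{k_\ast}^n / K^{n-1}$ to the defect integral, so a Markov estimate bounds the fraction of bad sub-boxes by $O\bigl(|\calV|\, K^n / (\ep_0^{2n-1} N)\bigr)$. Choosing $N \geq N_0(\ep_0, \theta_0, K, n)$ large enough to drive this below $\theta_0$, and then setting $L_0 = R_0 M^{N_0}$, completes the argument. Efficiency along the net $\calV$ upgrades to efficiency along every geodesic in $B'$ because tilting a direction by $\delta$ shifts partition images by $O(K \delta \rho_{k_\ast}) = O(\ep_0 \rho_{k_\ast})$ once $\delta \lmul \ep_0/K$.

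The main obstacle will be the stability lemma: an isolated single-line failure of efficiency contributes measure zero to any cross-sectional integral, so I must genuinely spread the failure across an $(n{-}1)$-dimensional neighborhood of nearby parallel lines using only the $(K,C)$-quasi-Lipschitz hypothesis. The remaining work is bookkeeping the interdependence of $M$, $N$, and $\delta$ as functions of $\ep_0$, $\theta_0$, $K$, $C$, and $n$, plus minor boundary effects where lines exit $B$ near $\partial B$, which are controlled by keeping the subdivision $\calB_R$ inside the central sub-box $\underline B$.
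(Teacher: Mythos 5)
Your overall strategy is the paper's (and Eskin--Fisher--Whyte's): telescope a nonnegative per-scale defect against the quasi-Lipschitz length bound, pigeonhole in the scale, spread a single bad line to a transverse tube of definite measure, and finish with Markov. The cosmetic differences (a continuous Fubini integral over lines rather than a discrete count over a unit-spaced locally finite family; the min-over-partitions $\Delta$ rather than the fixed-grid $\bDelta$; a direct perturbation estimate on the defect rather than the projection statement of \lemref{Easy}(4)) are all workable, and your telescoping inequality $D_k \le \nu_{k+1}-\nu_k$ is correct and in fact cleaner than the paper's. Your stability lemma is also sound, provided you add the observation that a minimizing $\ep_0\rho_k$-partition may be taken to have $O(1/\ep_0)$ points (no two consecutive gaps below $\ep_0\rho_k/2$, else merge them), so the whole sum moves by $O((K\delta'+C)/\ep_0)$ under a transverse translation by $\delta'$; absorbing the additive constant then requires $R_0 \gmul C/\ep_0^2$, not just $C/K$.

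The genuine gap is the passage from ``every decomposition subsegment meeting $B'$ has small defect'' to ``$f|_{B'}$ is $\ep_0$-efficient,'' which by \defref{size} must hold for \emph{every} geodesic in $B'$. You subdivide each line into \emph{consecutive, non-overlapping} windows $\sigma_i^{(k_*)}$ of length $\rho_{k_*}$, but a geodesic $\gamma=[c,d]\subset B'$ of length comparable to $\diam B'\emul \rho_{k_*}$ will generically straddle the boundary point $b_i$ between two consecutive windows. Efficiency does not concatenate: $f$ can satisfy the reverse triangle inequality on each window separately while backtracking by a definite fraction of $\rho_{k_*}$ across $b_i$, so small defect on the two pieces yields only $\Delta(f\circ\gamma,\ep_0\rho_{k_*}) \le d_\calZ(f(c),f(b_i)) + d_\calZ(f(b_i),f(d)) + O(\ep_0\rho_{k_*})$, which sits on the wrong side of the triangle inequality relative to $d_\calZ(f(c),f(d))$. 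The same problem breaks the spreading step in the contrapositive direction: a bad geodesic in $B'$ need not force any window of your fixed decomposition to be bad. This is precisely why the paper's family $\calF_B(R)$ consists of \emph{all} length-$R$ subsegments starting and ending at points of the $\ep R$-grid --- roughly $1/\ep$ overlapping offsets per line --- so that every geodesic in $B'$ is contained, after an $\ep R$-adjustment of its endpoints, in a single controlled segment, at which point parts (2) and (4) of \lemref{Easy} apply. The repair in your framework is to define the defect (or at least the badness of $B'$) over all $\ep_0\rho_{k_*}$-offsets of the length-$\rho_{k_*}$ windows, carry the extra average over offsets through the telescoping, Fubini, and Markov steps (the offsets must be chosen compatibly across scales so the fine partition still refines the coarse one), and only then invoke your tilting estimate for directions off the net $\calV$.
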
 

\begin{remark}
Note that the size of the error, $\ep_0 R$, depends on the size of the boxes.
An $\ep_0$--efficient map from a much larger box is allowed to have a much
larger error. What we control is the size of the error as a proportion of the
size of the box. 
\end{remark}

\thmref{Differentiable} is stronger than what we need as we will need only
one efficient box. However, this more general statement may be useful
for other applications of coarse differentiation. 
  
\subsection{Choosing the correct scales}
We first prove a much coarser differentiation statement. 
In a sense, the statement of \thmref{Differentiable} is a direct analogue
of Rademacher's theorem, but the proof of Rademacher's theorem
a direct analogue of proof of \thmref{Coarse} below.

\begin{definition}
A family $\calF$ of geodesics in $\R^n$ is called \emph{locally finite} if, for any 
compact subset $B$ of $\R^n$, only finitely many geodesics in $\calF$ intersect $B$. 
\end{definition}

Let $B$ be a box of size $L$.  Define $\calF_B$ to be the collection of 
restrictions of paths in $\calF$ to $B$ that are long. More precisely, let
\[
\calF_B= \Big\{ \gamma
\ST \gamma = \gamma' \cap B,
\quad \gamma' \in \calF, \quad
|\gamma| \gmul L \Big\}.
\]

For $\gamma \in \calF_B$ we say a set of points $\Gr(\gamma, r)$ along $\gamma$
is an \emph{$r$--grid for $\gamma$} if they subdivide $\gamma$ to 
segments of size exactly $r$ except perhaps for the two end segments which may
have a size less than $r$. An $r$--grid $\Gr(r)$ is a collection of $r$--grids for 
every segment in $\calF_B$. When an $r$--grid 
$\Gr(\gamma,r)= \{p_1, \ldots, p_k \}$ is fixed, we define 
\[
\bDelta(\gamma, r) = \sum_{i=1}^{k-1} d_\calZ(f(p_i), f(p_{i+1})). 
\]
This is essentially the same as the definition of $\Delta$ except the sum is
over a given $r$--grid instead of minimum over all $r$--partitions. 
Given a scale $R$, a segment $\gamma \in \calF_B$ with 
an $\ep R$--grid $\Gr(\ep R)$, we define $\calF(\gamma, R)$ to be the set of all 
subsegments of $\gamma$ of length $R$ that start and end at points in 
$\Gr(\gamma, \ep R)$. We also define 
\[
\calF_B(R) = \bigcup_{\gamma \in \calF_B} \calF(\gamma, R). 
\] 

\begin{theorem}\label{Thm:Coarse}
Let $\calF$ be a locally finite family of geodesics in $\R^n$.  For any 
$\ep>0$, $\theta>0$ and $R_0$, there exist a constant $L_0$ such that the 
following holds. Let $L>L_0$, $B \subset \R^n$ be a box of size $L$ and
$f \from B \to \calZ$ be a quasi-Lipschitz map. 
Then, there exist a scale $R \geq R_0$ and an $\ep R$--grid $\Gr(\ep R)$
such that, for at least $(1-\theta)$ fraction of segments $\gamma' \in \calF_B(R)$, 
\begin{equation} \label{Eq:bDelta}
\bDelta(\gamma', \ep R) \lmul d_\calZ (f(a),f(b))+\ep R. 
\end{equation}
where $a,b$ are the endpoints of $\gamma'$.

\end{theorem}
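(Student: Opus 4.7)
The plan is to emulate Rademacher's theorem via a multi-scale telescoping argument in the spirit of Eskin--Fisher--Whyte. The key fact is that $\bDelta(\gamma,r)$ is monotone non-increasing in $r$ (refining a grid can only increase the sum of consecutive distances), while the quasi-Lipschitz bound on $f$ gives a uniform upper estimate on $\bDelta(\gamma,\ep R_0)$. I would fix a geometric sequence of scales $R_k = R_0\lambda^k$ for $k=0,1,\ldots,N$, with $\lambda = 1/\ep$ (after possibly replacing $\ep$ by a slightly smaller value so that $\lambda$ is an integer). For each $\gamma \in \calF_B$ I select a fine grid of spacing $\ep R_0$ on $\gamma$, take $\Gr(\gamma,\ep R_k)$ to be its $\lambda^k$-th subset, and consider the natural partition $\calP_k(\gamma)$ of $\gamma$ into consecutive length-$R_k$ tiles starting at the left endpoint. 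These partitions are nested, each level-$(k{+}1)$ tile being the union of exactly $\lambda$ consecutive level-$k$ tiles.

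The heart of the argument is a telescoping identity. For $\alpha \in \calP_k(\gamma)$ with endpoints $a,b$, set
\[
I_k(\alpha) = \bDelta(\alpha,\ep R_k) - d_{\calZ}(f(a),f(b)) \ge 0.
\]
Since $\ep R_{k+1} = R_k$, the $\ep R_{k+1}$-grid restricted to a level-$(k{+}1)$ tile $\beta$ consists precisely of the endpoints of its $\lambda$ constituent level-$k$ sub-tiles, so
\[
\bDelta(\beta,\ep R_k) - \bDelta(\beta,\ep R_{k+1}) \;=\; \sum_{\alpha \subset \beta} I_k(\alpha).
\]
Summing over $\beta \in \calP_{k+1}(\gamma)$ yields $\bDelta(\gamma,\ep R_k)-\bDelta(\gamma,\ep R_{k+1}) = \sum_{\alpha\in\calP_k(\gamma)}I_k(\alpha)$; telescoping in $k$ and summing over $\gamma \in \calF_B$ gives
\[
\sum_{k=0}^{N-1}\sum_{\gamma}\sum_{\alpha\in\calP_k(\gamma)} I_k(\alpha) \;\le\; \sum_\gamma \bDelta(\gamma,\ep R_0) \;\lmul\; \sum_\gamma |\gamma|,
\]
using the quasi-Lipschitz control on $f$.

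A Markov--pigeonhole step then delivers a good scale. Writing $T_k \emul \sum_\gamma|\gamma|/R_k$ for the total number of level-$k$ tiles and $\bar I_k$ for the average of $I_k$ over $\bigcup_\gamma\calP_k(\gamma)$, the estimate above reads $\sum_{k=0}^{N-1}\bar I_k/R_k \lmul 1$. Hence for all but $O(1/(\theta\ep))$ values of $k$ one has $\bar I_k \lmul \theta\ep R_k$. Choosing $N$ above this threshold, and $L_0$ large enough that $L\ge L_0$ leaves room inside $B$ for segments of length comparable to $R_N$, produces a scale $R = R_k \ge R_0$ at which Markov's inequality upgrades the small average to a pointwise bound $I_k(\alpha) \lmul \ep R_k$ on at least a $(1-\theta)$-fraction of the tiles $\alpha \in \calP_k$, which is exactly \eqref{Eq:bDelta}.

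The main obstacle I foresee, and where the bulk of the technical work lies, is upgrading the conclusion from the natural tiling $\calP_k$ (with $|\gamma|/R_k$ elements per $\gamma$) to the full family $\calF_B(R_k)$, which contains $\lambda = 1/\ep$ times more elements, corresponding to the various shifts of the starting point on the $\ep R_k$-grid. A shifted subsegment straddling two level-$k$ tiles can fail \eqref{Eq:bDelta} at scale $R_k$ even when both tiles satisfy it---for instance if $f$ executes a ``U-turn'' at the shared boundary. I would address this by running the telescoping separately for each of the $\lambda$ shifts of the top-level partition and aggregating the resulting bounds, so that the average inefficiency over \emph{all} subsegments in $\calF_B(R_k)$ is controlled; the extra factor of $\lambda$ is absorbed into the multiplicative constant in \eqref{Eq:bDelta}, with $N$ and $L_0$ enlarged commensurately to carry the pigeonhole through. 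Setting up the shift-compatibility of the nested refinements in this variant is the most delicate piece of bookkeeping in the proof.
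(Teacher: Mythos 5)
Your skeleton -- nested dyadic-type scales $R_k=R_0/\ep^k$, the non-negative tile defects $I_k(\alpha)$, the telescoping identity bounding $\sum_k\sum_{\alpha\in\calP_k}I_k(\alpha)$ by $\bDelta(\gamma,\ep R_0)\lmul K|\gamma|$, and the Markov/pigeonhole selection of a good scale -- is exactly the paper's argument. The gap is in the step you yourself flag as the delicate one: passing from the fixed tiling $\calP_k$ to all of $\calF_B(R_k)$. Your proposed repair does not work. First, ``absorbing the extra factor of $\lambda=1/\ep$ into the multiplicative constant in \eqref{Eq:bDelta}'' is not permissible: the implied constant in $\lmul$ must be uniform in $\ep$, and a factor $1/\ep$ on the right-hand side turns $\ep R$ into $R$, at which point \eqref{Eq:bDelta} is weaker than the trivial quasi-Lipschitz bound $\bDelta(\gamma',\ep R)\lmul KR$ and no longer yields $\ep$-efficiency. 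Second, and more fundamentally, ``running the telescoping separately for each shift'' does not produce $\lambda$ independent budgets of size $KL$ distributed over the scales. A shift of the level-$k$ tiling is by a multiple of $R_{k-1}$, so the shifted tiles still refine the \emph{standard} level-$(k-1)$ tiling, and for each shift $s$ all you get is the single-scale bound $\sum_{\alpha\in\calP_k^s}I_k(\alpha)\le\bDelta(\gamma,R_{k-1})-d_\calZ(f(a_\gamma),f(b_\gamma))\lmul KL$; the shifted tilings at different scales do not nest into one chain, so these bounds do not telescope. If you assume every scale has a $\theta$-fraction of bad segments among all shifts, the resulting lower bound is $\gmul N\theta L$ against an upper bound of $N\lambda KL$, which gives no contradiction no matter how large $N$ is.

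The paper's resolution is to make the hierarchy \emph{adaptive}: at each scale $m$ one chooses, among the $1/\ep$ possible decompositions of $\gamma$ into $r_m$-tiles with endpoints on the previously chosen $r_{m-1}$-grid, the one maximizing the proportion $\theta_m(\gamma)$ of tiles violating \eqref{Eq:bDelta}. Since the overall bad fraction over all shifts is the average of the per-decomposition bad fractions, the maximizing decomposition has $\theta_m(\gamma)$ at least the overall fraction; and since the new grid $\Gr(\gamma,r_m)$ is by construction a subset of $\Gr(\gamma,r_{m-1})$, the drops $\bDelta(\gamma,r_{m-1})-\bDelta(\gamma,r_m)\gmul\ep L\,\theta_m(\gamma)$ accumulate in a \emph{single} telescoping sum bounded by $\bDelta(\gamma,r_0)\lmul KL$. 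Averaging over $\gamma$ then gives $K/\ep\gmul M\theta$ if all $M$ scales are bad, which is the desired contradiction. If you replace your fixed nested partitions by this greedy choice of decomposition at each level, the rest of your write-up goes through essentially verbatim.
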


Note that, \eqnref{bDelta} implies that $\gamma'$ is $\ep$--efficient
at scale $R$. But this statement is more suitable for our proof. 
Morally, the lemma states that $f|_{B}$ is 
nearly affine on scales $R$, up to an error of $O(\ep R)$. 

\begin{remark} In the lemma $R$ depends on $\epsilon$, $\theta$, 
$K$, $C$, and also on $B$. However the proof will find $R$  as one of finitely 
many values  as long as $\epsilon$, $\theta$, $K$ and $C$ are fixed.
\end{remark}

\begin{proof}[Proof of \thmref{Coarse}]
Pick $ r_0 \ge \max\{R_0,C\}$ ($C$ is the additive error in the the definition of a 
quasi-Lipshitz map) and inductively let 
\[
r_m = \frac{r_{m-1}}{\ep}.
\]
Let $M$ be a large positive integer (to be determined below) and let $L_1 = r_M$.  
Choose an arbitrary $r_0$--grid $\Gr(r_0)$ for $\calF_B$
and let $\theta_1$ be the fraction of segments in $\calF_B(r_1)$
for which \eqnref{bDelta} does not hold. If $\theta_1 \leq \theta$ then
we are done.  Thus assume $\theta_1>\theta$. 

For $\gamma \in \calF_B$, we choose an $r_1$--grid 
$\Gr(\gamma, r_1) \subset \Gr(\gamma, r_0)$ as follow: 
Note that $\Gr(\gamma, r_1)$ is essentially a \emph{decomposition} of $\gamma$
into segments of length $r_1$ (except for the subsegments in the ends). 
That is, we are choosing a non-overlapping subset of $\calF(\gamma, r_1)$ so 
that the next segment starts where the previous segment ended. We choose 
the decomposition $\calD(\gamma, r_1)$ so that the proportion $\theta_1(\gamma)$ 
of segments that do not satisfy \eqnref{bDelta} is maximum. Hence, 
the average of these proportions is larger than $\theta_1$.

For $m=1, \ldots, M$, we proceed the same way. If $\theta_m \leq \theta$
we are done. Otherwise, for every $\gamma$, we choose the decomposition 
$\calD(\gamma, r_m)$ where the proportion $\theta_m(\gamma)$ of segments that do not satisfy \eqnref{bDelta}
is maximum and use it to define the $r_m$--grid
$\Gr(\gamma, r_m)$. Again we have
\[
\frac{\sum_\gamma \theta_m(\gamma)}{|\calF_B|} \geq \theta_m > \theta. 
\]

We show that, if $M$ is large enough this contradict the assumption that $f$ 
is quasi-Lipschitz. First, note that:
$$
\bDelta(\gamma,r_{m-1}) - \bDelta(\gamma,r_m)
   \gmul (\ep \, r_m) \, \theta_m(\gamma) |\calD(\gamma, r_m) | \emul
   \ep \,  L \, \theta_m(\gamma).  
$$
After iterating this over $m$ as $m$ goes from $M$ down to $1$ we get
$$
\bDelta(\gamma,r_0)- \bDelta(\gamma,r_M) \gmul \ep \, L  
\sum_{m=1}^M \theta_m(\gamma).  
$$
Using the fact that $f$ is quasi-Lipschitz and $r_0 > C$ we have 
$$
\Delta(\gamma, r_0) \leq \frac L{r_0}(K r_0 + C) \lmul K L.
$$
Hence,
$$
K  L \gmul \ep \, L \sum_{m=1}^M  \theta_m(\gamma),
$$ 
and thus 
$$
\frac{K}\ep \gmul \sum_{m=1}^M\theta_m(\gamma).
$$
Average over all geodesics $\gamma \in \calF_B$ to get
$$
\frac K\ep \gmul 
\sum_{m=1}^M \left (\frac{1}{|\calF_B|}
  \sum_{\gamma\in \calF_B}\theta_m(\gamma)\right) 
  \gmul \sum_{m=1}^M \theta_m \geq M \theta.   
$$
Choosing $M$ large enough we obtain a contradiction. Hence, 
for some $m$, $\theta_m\leq \theta$ and we are done. 
\end{proof}

\begin{proof}[Proof of \thmref{Differentiable}]
Let $L_0$ and $\ep_0<1$ be given. Choose a family $\calF$ of geodesics
in $\R^n$ as follows: pick a finite set of vectors $\calV$ in the unit sphere 
$S^{n-1}\subset \R^n$ that is $(\ep_0)^2$ dense in $S^{n-1}$
with the size $|\calV| \emul \frac 1{(\ep_0)^2}$.
For a direction $\vec v \in \calV$, let $\calF_{\vec v}$ be a family 
of parallel lines in the direction $\vec v$ 
where the distance between nearby lines is comparable to $1$. Then
$$
\calF= \bigcup_{\vec v \in \calV} \calF_{\vec v}
$$
is a locally finite family of geodesic in $\R^n$. Let 
$$
\ep=\ep_0^2, \quad\text{and}\quad \theta \ll  \theta_0 \, \ep^{n+2}.
$$ 
Apply \thmref{Coarse} to obtain the constants $L_0$. 
Assume a box $B$ of size $L\ge L_0$ and a quasi-Lipschitz map 
$f \from B \to \calZ$ are given and let $R$ be the scale obtained from 
\thmref{Coarse}. 

Let $\calB_R$ be a collection of disjoint sub-boxes of $B$ giving
a decomposition of a central box in $B$ as in the statement of \thmref{Differentiable}.
Let $B' \in \calB$ be a box that contains a geodesic $\beta$ that is not 
$\ep_0$--efficient at scale $R$. Let $\vec v$ be the direction closest to the 
direction of $\beta$. Let 
\[
\NE(B', \vec v) \subset\calF_B(R)
\] 
be the set of geodesic segments in $\calF_{B,R}$ that are in the direction 
of $\vec v$, intersect $B'$ and are not $\ep_0$--efficient on scale $R$.

\subsection*{Claim:} Every geodesic in $\calF_{\vec v}$ that intersects 
an $\ep R$--neighborhood of $\beta$ contains a segment in 
$\NE(B', \vec v)$.

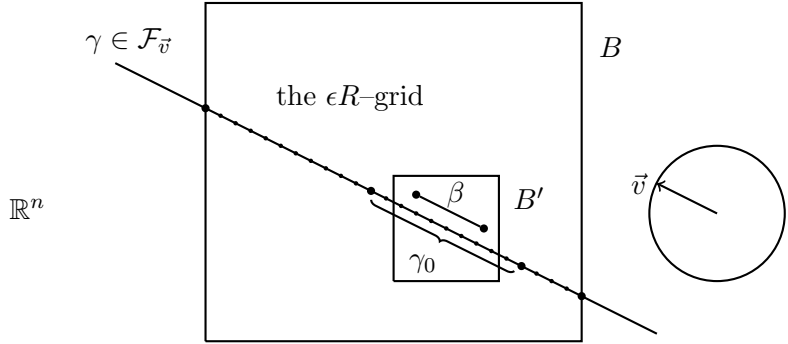
\begin{figure}[ht]
\setlength{\unitlength}{0.01\linewidth}
\begin{picture}(100, 37) 

\put(20,0){
\begin{tikzpicture} 
[thick, 
    scale=0.10,
    vertex/.style={circle,draw,fill=black,thick, inner sep=0pt,minimum size= .8 mm},
    vtx/.style={circle,draw,fill=red, inner sep=0pt,minimum size= .3 mm}]
    
   \node[vertex] (bl) at (58,21.5)   {}; 
   \node[vertex] (br) at (67,17)   {};

   \node[vtx] (g1) at (32,32)   {};
   \node[vtx] (g2) at (34,31)   {};
   \node[vtx] (g3) at (36,30)   {}; 
   \node[vtx] (g4) at (38,29)   {}; 
   \node[vtx] (g5) at (40,28)   {}; 
   \node[vtx] (g6) at (42,27)   {}; 
   \node[vtx] (g7) at (44,26)   {}; 
   \node[vtx] (g8) at (46,25)   {}; 
   \node[vtx] (g9) at (48,24)   {};   
   \node[vtx] (h1) at (50,23)   {};
   \node[vtx] (h2) at (54,21)   {};
   \node[vtx] (h3) at (56,20)   {}; 
   \node[vtx] (h4) at (58,19)   {}; 
   \node[vtx] (h5) at (60,18)   {}; 
   \node[vtx] (h6) at (62,17)   {}; 
   \node[vtx] (h7) at (64,16)   {}; 
   \node[vtx] (h8) at (66,15)   {}; 
   \node[vtx] (h9) at (68,14)   {};   
   \node[vtx] (h1) at (70,13)   {};
   \node[vtx] (h2) at (74,11)   {};
   \node[vtx] (h3) at (76,10)   {}; 
   \node[vtx] (h4) at (78,9)   {}; 

   \node[vertex] (c1) at (30,33)   {};
   \node[vertex] (c2) at (52,22)   {};   
   \node[vertex] (c3) at (72,12)   {};
   \node[vertex] (c4) at (80,8)   {};

   \node(v0) at (98,14)   {};
   \node(v1) at (88,19)   {};   

   \draw(30, 2) -- (80, 2) -- (80, 47) -- (30, 47) -- (30, 2);
   \draw(55, 10) -- (69, 10) -- (69, 24) -- (55, 24) -- (55, 10);
   \draw(18, 39) -- (90, 3);
   \draw(bl) -- (br);

\draw[decorate, decoration= {brace,mirror,raise=3pt}] (c2) -- (c3); 

\draw [->]  (98,19) -- (90,23)  node [left] {$\vec v$};
\draw (98,19)  circle (9);

\end{tikzpicture}}
 
\put(10,13){$\R^n$} 
\put(18,31){$\gamma \in \calF_{\vec v}$} 
\put(72,30){$B$} 
\put(63,14){$B'$} 
\put(56,14.8){$\beta$} 
\put(52,7.9){$\gamma_0$} 
\put(38,25){the $\ep R$--grid} 

\end{picture}
\caption{The arc $\gamma$ is in a $\ep R$--neighborhood of $\beta_0$.}
\label{Fig:gamma} 
\end{figure}

\emph{Proof of claim:} 
Assume $\gamma \in \calF_{\vec v}$ intersects an $\ep R$--neighborhood of 
$\beta$. The condition (2) of description of $\calB_R$ implies that 
$|\gamma \cap B| \gmul L$ and its subsegments of length $R$  
that start and end in $\Gr(\ep R)$ are included 
$\calF_{B,R}$. Since, the difference between the direction of $\gamma$ and
$\beta$ is at most $\ep=\ep_0^2$, in fact $\beta$ is contained
in an $\ep R$--neighborhood of $\gamma$. Also, the length of
$\beta$ is less than the diameter of $B'$ which is less than $R$. 
Hence, there is a segment $\gamma_0 \in \calF_B(\gamma, R)$ of
length $R$ where $\beta$ is included in an $\ep R$--neighborhood of $\gamma_0$
(refer to \figref{gamma}).
We show that if $\beta$ is not $\ep_0$--efficient on scale $R$, then $\gamma_0$ 
will not be $\ep$--efficient on scale $R$  which is what we claimed. 

Assume, for contradiction, that $\gamma_0$ is $\ep$--efficient on scale $R$.
Then every sub-segment of $\gamma_0$ is also efficient on scale $R$ 
(\lemref{Easy}). Choose a subsegment $\gamma_1$ of $\gamma_0$ 
so that the end points of $\gamma_1$ and $\beta$ are $\ep R$--close. 
We now apply the last conclusion of \lemref{Easy} with $\calZ'=f(\gamma_0)$ to 
conclude that $\beta$ is $\ep_0$--efficient, a contradiction. \qed

Let $\calF_{B'}(R)$ be the subset of $\calF_B(R)$ consisting of
segments that intersect $B'$. In every direction $\vec v \in \calV$
there are at most $\frac{R^{n-1}} \ep$ segments in $\calF_{B'}(R)$.
This is because a cross section of $B'$ perpendicular to $\vec v$
has an area at most $R^{n-1}$ and the grid has size $\ep R$.
Assuming $B'$ contains an non-efficient segment $\beta$, the number 
of geodesics in $\calF_{\vec v}$ that intersect an $\ep R$--neighborhood
of $\beta$ is of order of $(\ep R)^{n-1}$ (which is the area of a cross section
of an $\ep R$--neighborhood of $\beta$ perpendicular to $\vec v$). 
That is
\[
\frac{\big|\NE(B', \vec v)\big|} { |\calF_{B'}(R)|} 
\gmul \frac 1{|\calV|} \cdot \frac {(\ep R)^{n-1}}{\frac{R^{n-1}}{\ep}} \emul \ep^{n+2}.
\]
Note that a definite proportion of segments in $\calF_B(R)$ intersect
some box $B' \in \calB$ and each segment in $\calF_B(R)$ intersects
at most a uniform umber of boxes. Hence
\[
 |\calF_B(R)| \emul \sum_{B' \in \calB}  |\calF_{B'}(R)|.
\]
Define
\[
\NE(B') = \bigcup_{\vec v \in \calV} \NE(B', \vec v)
\qquad\text{and}\quad
\NE = \bigcup_{B' \in \calB} \NE(B')
\]
Assume the proportion of boxes $B'$ that contain a non-efficient segment is
larger than $\theta_0$. Since the sizes of $\calF_{B'}(R)$ are comparable for 
every $B'$, we have
\begin{align*}
\frac{|\NE|}{|\calF_B(R)|} 
\geq \frac 1{|\calB|} \sum_{B' \in \calB} \frac{\big|\NE(B')\big|} { |\calF_{B'}(R)|}
\gmul \frac{\theta_0 |\calB|}{|\calB|} \ep^{n+2} \gg \theta.
\end{align*}
The contradiction finishes the proof. 
\end{proof}

\subsection{Efficient map into a hyperbolic space} \label{Sec:Eff-Hyp}
The following is the first use of efficient paths when the target is Gromov 
hyperbolic and is similar to the familiar Morse argument.

\begin{lemma}
\label{Lem:Close-In-Hyp}
Suppose $\X$ is a Gromov hyperbolic space and $\gamma \from [a,b]\to \X$ is 
$\epsilon$--efficient on scale $R$.  Then $\gamma$ stays in an 
$O(\epsilon R)$--neighborhood of a geodesic $\ell$ joining $\gamma(a)$ 
and $\gamma(b)$.   
\end{lemma}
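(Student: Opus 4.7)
The plan is a standard Morse-type argument: bound the Gromov product $(\gamma(a)|\gamma(b))_{\gamma(t)}$ at an arbitrary intermediate point $t \in [a,b]$, and then convert that bound into closeness to $\ell$ via the thin-triangle property of the $\delta$-hyperbolic space $\X$.

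First I would apply part (1) of \lemref{Easy} to the three points $s_0 = a$, $s_1 = t$, $s_2 = b$, where $t \in [a,b]$ is arbitrary. Since $k = 3$ is uniformly bounded, the hypothesis is satisfied, and we obtain
$$
d_\X\big(\gamma(a), \gamma(t)\big) + d_\X\big(\gamma(t), \gamma(b)\big)
= d_\X\big(\gamma(a), \gamma(b)\big) + O(\epsilon R).
$$
Rearranging, the Gromov product based at $\gamma(t)$ satisfies
$$
\big(\gamma(a)\,\big|\,\gamma(b)\big)_{\gamma(t)} = O(\epsilon R).
$$

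Next I would invoke the standard fact from $\delta$-hyperbolic geometry: for any three points $x, y, z$ and any geodesic $\ell$ joining $y$ to $z$, one has $d(x, \ell) \leq (y|z)_x + O(\delta)$. This is an immediate consequence of the tripod model for a geodesic triangle and $\delta$-thinness. Applied with $x = \gamma(t)$, $y = \gamma(a)$, $z = \gamma(b)$, this yields
$$
d_\X\big(\gamma(t), \ell\big) \leq O(\epsilon R) + O(\delta).
$$
Since $\delta$ is a uniform constant of $\X$, the $O(\delta)$ term is absorbed into the $O(\epsilon R)$ bound in the regime where the lemma is used (where $\epsilon R$ is at least a uniform constant). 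As $t$ was arbitrary, this shows every point of $\gamma$ lies in an $O(\epsilon R)$-neighborhood of $\ell$.

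There is essentially no obstacle: the only nontrivial step is recognizing that $\epsilon$-efficiency is precisely the three-point reverse triangle inequality that one usually extracts, with some work, from quasi-geodesic length estimates in the classical Morse argument. Here it is handed to us for free by \lemref{Easy}(1), and the rest is a one-line Gromov-product computation.
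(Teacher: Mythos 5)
Your proof is correct, and it takes a genuinely different route from the paper's. The paper runs the classical Morse excursion argument by contradiction: it isolates a maximal excursion $\gamma|_{[c,d]}$ of depth $M\epsilon R$ away from $\ell$, prunes a near-optimal partition so that consecutive points are $\asymp \epsilon R$ apart (giving $N \gmul M$ of them), and then plays the contraction property of the nearest-point projection to $\ell$ (consecutive projections move by at most a uniform $D_1$) against the lower bound $\Delta(\gamma',\epsilon R) \gmul N\epsilon R$ to force $N$, hence $M$, to be uniformly bounded. You instead observe that \lemref{Easy}(1) applied to $a \le t \le b$ hands you the three-point reverse triangle inequality at \emph{every} intermediate time, i.e. a bound $\big(\gamma(a)\,\big|\,\gamma(b)\big)_{\gamma(t)} = O(\epsilon R)$ on the Gromov product, and then the standard inequality $d(x,[y,z]) \le (y\,|\,z)_x + O(\delta)$ finishes it in one line. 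Your approach is shorter and makes transparent exactly which feature of efficiency is being used; the paper's projection argument is the one that survives when only a quasi-geodesic (length) hypothesis is available and the pointwise reverse triangle inequality is not, which is presumably why the authors reached for it. Your caveat about absorbing the additive $O(\delta)$ into $O(\epsilon R)$ is fair and is no worse than the paper's own implicit use of $\epsilon R$ exceeding a uniform constant in the step ``$2D_0 + ND_1 + O(\epsilon R) \gmul N\epsilon R$ implies $N$ is bounded.''
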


\begin{proof}
Suppose, for a large $M$, that the path $\gamma$ leaves an 
$M \epsilon R$--neighborhood of $\ell$. Fix a constant $D_0 \geq C$.
We can find times  $c,d \in [a,b]$  so that at times $c,d$, $\gamma$ is distance $D_0$ from $\ell$;    for $t \in [c,d]$, 
$\gamma(t)$ is at least $D_0$ away from $\ell$, and so that in between $c$ 
and $d$ the path $\gamma$ travels to a point at of a distance 
$M\epsilon R$ from $\ell$.   

By \lemref{Easy}, $\gamma'= \gamma|_{[c,d]}$ is still $\ep$--efficient. 
Let $c=t_1 < \ldots < t_N=d$ be a partition so that, for $z_i = \gamma(t_i)$,
\begin{equation} \label{Eq:Best}
\Delta(\gamma', \ep R) \gmul
\sum_i d_\X (z_{i+1}, z_i ).
\end{equation}
(In fact, by definition of $\Delta$, we can choose $t_i$ so that the two sides 
are equal. However, we are about to modify the partition $t_i$.) 
We can remove some of the times $t_i$ so that
\[
d_\X (z_{i+1}, z_i) \emul \ep R. 
\]
Note that after removing points from the partition, \eqnref{Best} still holds.
Since $t_{i+1}-t_i\emul \epsilon R$ we have $N \gmul M$.  
By the contraction property of hyperbolic spaces, there is a uniform constant
$D_1$ and a projection map $\pi\from \X\to \ell$ such that 
\[
d_\X\big(\pi(z_i),\pi(z_{i+1})\big)\leq D_1.
\] 
By using these projected points to $\ell$ and since $\gamma(c)$ and $\gamma(d)$ are at distance $N_0$ from $\ell$   we have 
\[
d_\X \big(\gamma(c),\gamma(d) \big) \leq  2D_0 + N D_1.
\] 
On the other hand   
\[
\Delta(\gamma',\epsilon R)\gmul \sum d_\X (z_{i+1},z_i) \gmul N\epsilon R .
\]  
From the assumption that $\gamma'$ is $\epsilon$--efficient on scale $R$,  
we have
\[
2 D_0+N D_1 +O(\ep R)  \gmul N \ep R. 
\]  
This implies $N$ is uniformly bounded. Hence, $M$ is also
uniformly bounded. This finishes the proof.
\end{proof} 

We now consider an efficient map from a box to a hyperbolic space. 
First we need the following lemma.

\begin{lemma}
\label{Lem:Convex}
Given $n$ and $N$, there is $\sigma=\sigma(n,N)>0$ such that for each $L$, if  
$\{C_i\}$ is a collection of $N$ convex bodies in $\R^n$  that cover a ball $B$ 
of radius $L$, then some $C_i$ contains a ball of radius $\sigma L$. 
\end{lemma}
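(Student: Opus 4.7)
The plan is to reduce to a volume estimate combined with the John ellipsoid. By rescaling we may assume $L=1$, so $B$ has volume $\omega_n$ (the volume of the unit ball in $\R^n$). Since $B \subset \bigcup_i C_i$, we may as well replace each $C_i$ by $C_i \cap B$, which is still convex, is contained in $B$, and in particular has diameter at most $2$. By pigeonhole, some $C_i$, say $C_1$, satisfies
\[
\Vol(C_1) \geq \frac{\omega_n}{N}.
\]

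The core of the argument is then to show that a convex body of controlled diameter whose volume is bounded below must contain a ball of definite radius. The cleanest route is to apply John's theorem to $C_1$: there exists an ellipsoid $E$, centered at some point, with semi-axes $a_1 \geq a_2 \geq \cdots \geq a_n > 0$, such that
\[
E \subset C_1 \subset nE,
\]
where $nE$ denotes dilation by factor $n$ about the center of $E$.

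From $E \subset C_1$ we get $2a_1 \leq \diam(C_1) \leq 2$, hence $a_i \leq a_1 \leq 1$ for every $i$. From $C_1 \subset nE$ we get
\[
\frac{\omega_n}{N} \leq \Vol(C_1) \leq \Vol(nE) = n^n \omega_n \, a_1 a_2 \cdots a_n,
\]
so $a_1 a_2 \cdots a_n \geq 1/(N n^n)$. Since each $a_i \leq 1$, this forces
\[
a_n \geq \frac{1}{Nn^n}.
\]
The inscribed ball of $E$ has radius $a_n$, and $E \subset C_1$, so $C_1$ contains a ball of radius $\sigma := 1/(Nn^n)$. Rescaling back, $C_1$ contains a ball of radius $\sigma L$, which is what we wanted.

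The only real obstacle is the step asserting that "large volume plus small diameter implies large inradius'' for convex bodies. One can either cite John's theorem as above — this is the shortest route and yields the explicit constant $\sigma(n,N) = 1/(Nn^n)$ — or alternatively derive the inequality $\Vol(K) \lmul \diam(K)^{n-1} \cdot r(K)$ directly by sandwiching $K$ between a supporting hyperplane and a parallel one at distance $\gmul r(K)$. Either route is routine; the John's theorem version is essentially a two-line computation once the semi-axes are ordered.
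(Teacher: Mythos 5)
Your proof is correct, and it takes a genuinely different route from the paper's. The paper bounds $\Vol(C)$ from above in terms of the out-radius $R$ and the width $w$ (citing an inequality of Hern\'andez Cifre--Salinas--Segura Gomis), converts width into in-radius via Steinhagen's inequality $w \leq c\, r$, and then runs a two-case analysis on the size of $\arcsin\frac{cr}{2R}$ to extract $r \gmul L$. You instead normalize by intersecting each $C_i$ with $B$ (so the diameter is at most $2$ after rescaling), apply the pigeonhole on volume, and invoke John's theorem $E \subset C_1 \subset nE$: the diameter bound forces every semi-axis to satisfy $a_i \leq 1$, the volume bound forces $a_1\cdots a_n \geq 1/(Nn^n)$, and together these give $a_n \geq 1/(Nn^n)$, i.e.\ an inscribed ball of that radius. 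Your argument is shorter, avoids the case split, and produces the explicit constant $\sigma = 1/(Nn^n)$; it also makes cleanly explicit a point the paper leaves implicit, namely that one needs an upper bound on the out-radius (here obtained by first intersecting with $B$) before a volume lower bound can be converted into an in-radius lower bound. The trade-off is only in which classical input one is willing to cite: John's theorem versus the width--volume inequality plus Steinhagen. Either is an entirely standard piece of convex geometry, and for the purposes of this paper only the existence of some $\sigma(n,N)>0$ matters, so both proofs are equally serviceable.
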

\begin{proof}
For a convex set $C$, let $R=R(C)$ be the out-radius of a convex set: the radius of 
the smallest ball that contains it. Let $r=r(C)$ be the in-radius: the largest ball 
contained in the set and $w=w(C)$ be the width: the minimum  distance between 
supporting hyperplanes.

From Theorem 1 in \cite{cifre:OC} we have, for some $\kappa=\kappa(n)>0$,  
and any convex set $C$ that 
\[
\text{Vol}(C)\leq \kappa R^n\int_0^{\arcsin \frac{w}{2R}} \cos^n\theta d\theta
  \leq \kappa R^n\arcsin \frac{w}{2R}\leq \kappa R^n\arcsin \frac{c\,r}{2R}.
\] 
The last inequality follows from the Steinhagen inequality, which states that 
there is a constant $c=c(n)>0$,  such that 
\[
w\leq c\, r.
\]

Since the convex sets $C_i$ cover the ball of radius $L$,  for some 
$c'>0$, there is some $C=C_i$ with
\[
\Vol(C)\geq  c'L^n/N.
\] 
This implies that $R \geq c''L$, for some constant $c''=c''(N)>0$.

We will show $r\geq \sigma L $ by arguing in two cases. Assume, 
\[
\arcsin \frac{c\, r}{2R}\geq\frac{\pi}{4}.
\] 
Then $\frac{c\, r}{2R}\geq \frac{\sqrt{2}}{2}$ and so 
\[
c\,r\geq  \sqrt{2}R\geq \sqrt{2}c''L,
\]
and we are done by taking $\sigma=\frac{\sqrt{2}c''}{c}$.  
Now assume 
\[
\arcsin \frac{c\,r}{2R}\leq\frac{\pi}{4}
\] 
so that 
\[
\arcsin \frac{c\,r}{2R}\leq \frac{c\,r}{R}.
\] 
But then 
\[
c'L^n/N\leq \Vol(C)\leq \kappa\, R^n\frac{c\,r}{R}= \kappa \, c \, R^{n-1}\,r
\] 
and so 
\[
r\geq \frac{c'}{c\, N\kappa} \left(\frac LR\right)^{n-1}  L,
\] 
and again we are done by taking $\sigma=\frac{c' }{c\,(c'')^{n-1}N\kappa }$.
\end{proof}

\begin{proposition} \label{Prop:Efficient-to-Hyperbolic}
Suppose $\calZ$ is a Gromov hyperbolic space and $f \from B \to \calZ$
is an $\ep$--efficient map from a box of size $R$ in $\R^n$ to $\calZ$. 
Then, there is a sub-box $B' \subset B$ with $|B'| \emul |B|$, so that 
the image $f(B')$ lies in an $O(\ep R)$--neighborhood of a line $\ell'$ in $\calZ$. 
\end{proposition}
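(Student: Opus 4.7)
The plan is to decompose $B$ into a uniformly bounded number of approximately convex subsets, on each of which $f$ maps near a single geodesic of $\calZ$, and then invoke \lemref{Convex} to extract a ball of comparable size inside one subset, from which the sub-box $B'$ will be taken.

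First I would fix an $\ep R$--net $\calN \subset B$ with $|\calN| \le N_0(\ep,n)$ uniform. Using the standard tree approximation for finite subsets of Gromov hyperbolic spaces, the image $f(\calN)$ lies within additive error $O(\delta \log N_0)$ of a union of uniformly many geodesic segments $e_1, \ldots, e_N \subset \calZ$. Combined with the $\ep R$--density of $\calN$ in $B$ and the quasi-Lipschitz bound on $f$, every $p \in B$ satisfies $d_\calZ\big(f(p), e_i\big) \lmul \ep R$ for some $i$, provided $R$ is large enough that $\delta \log N_0$ is absorbed into $\ep R$.

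Next, set $C_i = \big\{ p \in B : d_\calZ(f(p), e_i) \le M \ep R \big\}$ for a large uniform $M$; by the previous step, the $C_i$ cover $B$. Each $C_i$ is approximately convex: if $p, q \in C_i$, then \lemref{Easy}(2) implies that the restriction of $f$ to the segment from $p$ to $q$ is $\ep$--efficient at scale $R$, so by \lemref{Close-In-Hyp} its image lies within $O(\ep R)$ of the geodesic $[f(p), f(q)]$. Since $f(p)$ and $f(q)$ are within $M \ep R$ of $e_i$ and $\calZ$ is Gromov hyperbolic, $[f(p), f(q)]$ fellow-travels the sub-arc of $e_i$ joining their closest-point projections, with error $O(M \ep R + \delta) \lmul \ep R$. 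Hence the image of the segment is contained in the $M'\ep R$--neighborhood of $e_i$ for a slightly larger uniform $M'$, so a one-time enlargement of the constant $M$ makes the sets $C_i$ closed under taking segments in $B$.

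Finally, applying \lemref{Convex} to the cover $\{C_1, \ldots, C_N\}$ of (the central ball of) $B$, with $N$ uniform, yields a ball of radius $\sigma R$ contained in some $C_i$, where $\sigma = \sigma(n, N) > 0$ is uniform. This ball contains the desired sub-box $B'$ of size $\emul R$, and $f(B')$ lies in an $O(\ep R)$--neighborhood of the geodesic $\ell' = e_i$. The main obstacle is the tree-approximation step: realizing the geodesic collection $\{e_i\}$ inside $\calZ$ itself (not just in an abstract tree) so that its $\ep R$--neighborhood covers $f(B)$, and ensuring the additive error $O(\delta \log N_0)$ is dominated by $\ep R$. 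A secondary subtlety is passing from approximately convex to genuinely convex sets before invoking \lemref{Convex}, which is handled by the one-time enlargement of $M$ and slight shrinkage of the extracted ball.
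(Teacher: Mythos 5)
Your strategy---cover $B$ by approximately-convex preimages $C_i$ of tubes around geodesics $e_i$, enlarge the constant to make them genuinely convex, and invoke \lemref{Convex}---matches the paper's closing step, and the approximate-to-genuine-convexity passage you flag as a secondary subtlety is indeed manageable (iterate the segment argument $n$ times, or use Carath\'eodory as the paper does). The obstacle you identify as the main one, however, is not really an obstacle: take $e_i = [f(p_0), f(p_i)]$ for a fixed basepoint $p_0\in\calN$; every $f(p_j)$ is then an endpoint of its own $e_j$, so the covering of $f(B)$ by $O(\ep R)$-tubes follows immediately from the $\ep R$-density of $\calN$ and quasi-Lipschitzness, with no tree approximation needed at all.

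The genuine gap is the \emph{count} of the covering sets, which you assert to be uniform. An $\ep R$-net in a box of size $R$ in $\R^n$ has $|\calN| \emul \ep^{-n}$ points, so $N \emul \ep^{-n}$; from the proof of \lemref{Convex}, $c''(N)\emul N^{-1/n}$ and hence $\sigma(n,N)\emul N^{-1/n}\emul\ep$. Your extracted ball therefore has radius $\gmul \ep R$ rather than $\emul R$, and this falls short of the claim $|B'|\emul|B|$ with $\ep$-independent implied constants. That dependence matters: the proposition is applied once per hyperbolic factor in the proof of \thmref{Standard-Flat}, and an $\ep$-dependent loss at each step would not allow the induction to close with boxes of size $\geq R_0$. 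To get a cover by a number of convex sets depending only on $n$, one must use hyperbolicity for more than density: the paper's inductive thin-quadrilateral argument shows that every $f(q)$ lies within $O(\ep R)$ of one of the $n\cdot 2^{n-1}$ geodesics $\ell_i'$ joining the $f$-images of endpoints of the \emph{edges} of $B$, so the cover is indexed by edges and $N$ is a function of $n$ alone. That covering claim is the actual content of the proof and is precisely what your net-density argument fails to replace.
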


\begin{proof}
By taking a sub-box we assume $B=[0,R]^n$ and let $\ell_i$ be the edges
of the box $B$. Given a line $\ell \subset B$ denote by $\ell'$ a geodesic 
in $\calZ$ joining the $f$ image of its endpoints. 

We first prove by induction on $n$ that, for any $q\in B$, $f(q)$ is 
within $O(\ep R)$ of a point in some $\ell'_i$.  We start with $n=2$ and  
$\ell_1,\ell_2,\ell_3,\ell_4$  the four edges of $B$ arranged in counterclockwise 
order.   We have by \lemref{Close-In-Hyp} that each point of $f(\ell_i)$ is 
within $O(\ep R)$ of $\ell'_i$. Now let $\ell_q$ be the line  through $q$ parallel 
to  $\ell_1$.  Take the  rectangle with sides $\ell_q,\ell_1$ and subsegments 
$m_2\subset \ell_2$ and $m_4\subset \ell_4$.  \lemref{Close-In-Hyp} implies
that the end points of $m_2'$ are within $O(\ep R)$ of $\ell_2'$ and hence
$m_2'$ are within $O(\ep R)$ of $\ell_2'$. The same holds for $m_4'$ and 
$\ell_4'$.  The quadrilateral bounded by  $\ell_1', m_2',\ell_q', m_4'$ is $O(1)$ 
thin, which implies that $f(q)$ is within $O(\epsilon R)$ of  one of the other 
three sides and therefore within $O(\epsilon R)$ of one of the $\ell_i'$.  

Now suppose the statement is  true for boxes in $\R^{n-1}$ and  
$B\subset \R^n$.   Take again the geodesics $\ell_i$  that correspond to the 
edges of the box $B$ and any point $q\in B$.  It lies on a face $B_q^{n-1}$ 
parallel to the a face of $B$. Let $\tau_i$ be the edges of $B_q^{n-1}$.  
By induction, $f(q)$ lies within $O(\epsilon R)$ of some $\tau_i'$.  Since each 
$\tau_i$ itself lies in an $n-1$ dimensional face, again by induction, each point 
of  $f(\tau_i)$ lies within $O(\epsilon R)$ of  the union of $\ell_i'$.  Thus $f(q)$ is 
within $O(\epsilon R)$ of some $ \ell_i'$, completing the induction step.   

Now, fix any $n$ and one of the  geodesics $ \ell_i'$.  If $n+1$ points 
$q_1,\ldots q_{n+1}$ span an $n$ simplex $\Lambda$ and  are such that 
each $f(q_j)$ is within $O(\epsilon R)$ of $\ell_i'$, then the image under $f$ of every 
point of $\Lambda$ is within $O(\epsilon R)$ of $\ell_i'$.    By the 
Caratheodory theorem, the convex hull of the set of points mapped within $O(\epsilon R)$ 
of $ \ell_i'$   is the union of such simplices, and therefore the convex hull is a 
convex set of points  mapped within $O(\epsilon R)$ of $\ell_i'$. We conclude by 
\lemref{Convex} that there is a box $B'$ with $|B'|\emul |B|$ consisting of 
points mapped within $O(\epsilon R)$ of one of the $\ell_i'$. 
\end{proof}

\section{Combinatorial Model} \label{Sec:Model}
Let $S$ be a possibly disconnected surface of finite hyperbolic type. 
Define the complexity of $S$ to be
\begin{equation} \label{Eq:Complexity}
\xi(S) = \sum_W (3{\sf g}_W + {\sf p}_W-4),
\end{equation}
where the sum is over all connected components $W$ of $S$,
${\sf g}_W$ is the genus of $W$ and ${\sf p}_W$ is the number of punctures. 

Let $(\T(S), d_\T)$ represent the \Teich space equipped with
the \Teich metric, $(\T(S), \d_{W\!P})$ represent \Teich space equipped with 
the Weil-Petersson metric and $(\Mod(S), d_W)$ represent the mapping class 
group equipped with the word metric. We construct combinatorial
models for these spaces. 

Let $\calP(S)$ be the space of pants decompositions of $S$. That is,
a point $P$ in $\calP(S)$ is a free homotopy class of maximum number of
disjoint essential simple closed curves. Define a marking 
$(P, \{\tau_\alpha\}_{\alpha \in P})$ to be a pants decomposition together
with a transverse curve $\tau_\alpha$, for each pants curve $\alpha$. 
The transverse curves are assumed to be disjoint from other curves in 
$P$ and to intersect $\alpha$ minimally (see \cite{minsky:CCII} for more details).
The space of all markings is denoted by $\calM(S)$. An augmented marking
$(P, \{\tau_\alpha\}_{\alpha \in P}, \{\ell_\alpha\}_{\alpha \in P})$ 
is a marking together with a positive real number $l_\alpha$ (length of
$\alpha$) associated to to every pants curve $\alpha$. The length 
of each curve is assumed to be less than the Bers constant for the surface $S$.
The space of augmented markings is denoted by $\AM(S)$
(see \cite{rafi:HT} and also \cite{durham:AM} for slightly different definition
and extensive discussion of $\AM(S)$). 
We will use these spaces as combinatorial models for, respectively,
$(\T(S), d_{W\!P})$, $(\Mod(S), d_W)$ and $(\T(S), d_\T)$. 
Assume $\X= \X(S)$ is one of these model spaces. Later in this section we will 
equip $\X$ with a coarse metric. 

\subsection{Curve complex} \label{Sec:Curve-Cpmplex}
Let $W$ be a subsurface of $S$. We always assume a subsurface is connected
(unless specified otherwise) and that the embedding $W\subset S$ induces an 
injective map $\pi_1(W) \to \pi_1(S)$. We also exclude the cases
where $W$ is a thrice-punctured sphere or an annulus going around a puncture. 

Let $\C(W)$ be the curve graph of $W$ with metric $d_{\calC(W)}$.
This is a graph where the vertices are free homotopy class of 
nom-trivial non peripheral simple closed curve (henceforth, simply
referred to as curves) and edges are pairs of curves intersecting
minimally (see \cite{minsky:CCII} for precise definition and discussion). 
We make a special definition for the case of annuli. For an annulus $A$, 
\begin{itemize}
\item $\C(A)$ is a horoball in $\HH^2$ when $\X=\AM(S)$. 
\item $\C(A)$ is $\ZZ$ when $X=\calM(S)$. 
\item $\C(A)$ is a point when $\X=\calP(S)$. 
\end{itemize}
The curve complex of every subsurface is Gromov hyperbolic in all cases. 
This is clear when $W$ is an annulus and is a theorem of 
Masur-Minsky \cite{minsky:CCI} in other cases. 

For every subsurface $W$ of $S$, there is a coarsely defined projection map
(see \cite{minsky:CCII} for general discussion and \cite{durham:AM} for the case
of augmented markings)
\[
\pi_W \from \X \to \C(W). 
\] 
We sketch the definition here. Assume first that $W$ is not an annulus. 
Given $x\in \X$ (recall that in all three cases $x$ contains a pants decomposition 
which we denote by $P_x$) 
choose any pants curve $\gamma \in P_x$ that intersects $W$.   
If $\gamma\subset W$ then choose the projection to be $\gamma$.  
If $\gamma$ is not contained in $W$ then $\gamma\cap W$ is a collection of 
arcs  with endpoints on $\partial   W$.  Choose one such arc and perform a 
surgery using this arc and a subarc of $\partial W$ to find a point in $\C(W)$. 
The choice of different arcs or different choices of intersecting pants curves 
determines a set of diameter $2$ in $\C(W)$; hence the projection is coarsely 
defined.

For annuli $A$ the definition is slightly different. When $\X= \calP(S)$
the projection map is trivially defined since $C(A)$ is just a point. 
When $\X= \calM(S)$, consider the annular cover $\tilde A$ of $S$ associated 
to $A$. Identify the space of arcs in $\tilde A$ (homotopy classes of arc connecting 
different boundaries of $\tilde A$ relative to their end points) with $\ZZ$ 
by identifying some arc $\omega_0$ with zero and sending every other arc
$\omega$ to the signed intersection number between $\omega$ and
$\omega_0$. Define $\pi_A(x)$ by lifting the pants deposition $P_x$ and transverse 
curves $\tau_\alpha$ to $\tilde A$. At least one of these curves lifts to an 
arc connecting different boundaries of $\tilde A$ and different ones have bounded intersection number. Hence the map is coarsely defined. 
We refer to this number as the twisting number of $x$ around $\alpha$ and
denote it by $\twist_\alpha(x)$, which coarsely defined integer. 

Now consider the case $\X=\calA\calM(S)$. Let $\sf B$ be the Bers constant
of the surface $S$.  For an annulus $A$, we identify $\C(A)$ with the subset 
$H \subset \R^2$ of all points in $\R^2$ where the $y$--coordinate is larger 
than $1/{\sf B}$.  Note that, for an augmented marking 
$x=(P, \{\tau_\alpha\}, \{l_\alpha\})$, the twisting number $\twist_\alpha(x)$ 
can still be defined as above. If the core curve of $A$ is in $P_x$ we define 
$$
\pi_A(x) = \big( \twist_\alpha(x), 1/l_\alpha \big),
$$
otherwise
$$
\pi_A(x) = \big( \twist_\alpha(x), 1/B \big).
$$

Also, for subsurfaces $U$ and $V$ we have a projection map
\[
\pi_{U,V} \from \C(U) \to \C(V). 
\]
which is defined on the subset of $\C(U)$ consisting of curves that
intersect $V$. Here $U$ is non-annular; for an annulus $A$, 
elements of $\C(A)$ cannot be projected to other subsurfaces. 
When the context is clear, we denote all these projection 
maps simply by $\pi$. By construction, all projection maps are
quasi-Lipschitz.

\subsection{Distance Formula} \label{Sec:Distance}
For $x,y\in \X$, define the $W$--projection distance between $x$ and $y$ to be:
\[
d_W(x,y) = d_{\C(W)}\big(\pi_W(x), \pi_W(y) \big).
\]
In fact, when $W$ is an annulus with core curve $\alpha$, we sometimes
denote this distance with $d_\alpha(x,y)$. 
We define the \emph{distance} in $\X$ using these projection distances. 
For a threshold $T>0$ large enough, define
\begin{equation} \label{Eq:Distance}
d_\X(x,y) = \sum_{W \in \calW_T(x,y)} d_W(x,y),
\end{equation}
where $\calW_T(x,y)$ is the set of subsurfaces with $d_W(x,y)\geq T$. 
This is not a real metric since the distance between different points
may be zero and the triangle inequality does not hold. However, it is symmetric 
and the triangle inequality holds up to a multiplicative error. That is, 
for $x,y,z \in \X$
\[
d_\X(x,y) + d_\X(y,z) \gmul d_X(x,z). 
\]
Also, changing the threshold changes the metric by only uniform
additive and multiplicative constants. That is, for $T' \geq T$ we have 
(see \cite{minsky:CCII, rafi:CM})
\begin{equation} \label{Eq:Threshold} 
\sum_{W \in \calW_T(x,y)} d_W(x,y) 
   \emul \sum_{W \in \calW_{T'}(x,y)} d_W(x,y).
\end{equation}
Even though this is not a metric, it makes sense to say $\X$ is quasi-isometric
to another metric space. In fact, in the category of metric spaces up to
quasi-isometry, this notion of distance is completely adequate. 
We fix a threshold $T$ once and for all so that $d_\X(x,y)$ is
a well defined number for all $x,y \in \X$. The threshold $T$
needs to be large enough so that statement in the rest of this subsection hold. 

There is a coarsely defined map
\[
(\T(S), d_{W\!P}) \to  \calP(S)
\]
sending a Riemann surface $X$ to the shortest pant decomposition in $X$ 
which is, by \cite{brock:WV}, a quasi-isometry. Hence $\calP(S)$ with the 
above metric is a combinatorial model for the Weil-Petersson metric. 

A point in $(\Mod(S), d_W)$ can be coarsely represented as a marking 
\cite{minsky:CCI}. That is, there is a coarsely defined map 
\[
(\Mod(S), d_W) \to  \calM(S),
\]
which can be defined by, for example,  fixing a point $x_0 \in \calM(S)$ 
and sending a mapping class $\phi \in \Mod(S)$ to the marking $\phi(x_0)$. 
It is shown in \cite{minsky:CCII} that this map is a quasi-isometry. 

A point in $(\T(S), d_\T)$ can be coarsely represented as an augmented 
marking  \cite{rafi:HT}. That is, there is a coarsely defined map 
\[
(\T(S), d_\T) \to  \calA\calM(S)
\]
defined as follows. A point in $X$ in \Teich space is mapped to the augmented
marking $x=(P, \{\tau_\alpha\}, \{l_\alpha\})$ where $P$ is the shorts
pants decomposition in $X$, for $\alpha \in P$, $\tau_\alpha$ is the
shortest transverse curve to $\alpha$ in $X$ and $l_\alpha$ is the
hyperbolic length of $\alpha$ in $X$. It follows from \cite{rafi:CM}
that this map is a quasi-isometry. (Again, see \cite{durham:AM} for
more details in this case.) 

By $(\X(S), d_\X)$ we denote one of the model spaces above. 
When the context is clear, we use $\X$ instead of $\X(S)$. However,  often 
we need to talk about $\X(W)$ when $W$ is a 
subsurface of $S$. For example, if $\X(S)$ is the space of pants
decompositions of $S$, then $\X(W)$ is the space of pants decompositions
of $W$.

\subsection{Bounded projection, consistency and realization} 
\label{Sec:Consistency} 
In this section, we review some properties of the projection maps. 
We will also drive a coarse characterization of the image of the
curve complex projections of points in $\X$ similar to \cite{minsky:MR}.

We start with a Theorem from \cite{minsky:CCII}. 

\begin{theorem}[Bounded Geodesic Image Theorem]
\label{Thm:BGIT}
There exists a constant $M_0$ so that the following holds. 
Assume $V \subsetneq  U$ are subsurfaces of $S$ and 
$\beta_1,\ldots,\beta_k$ is a geodesic in $\calC(U)$.
Then either there is some $\beta_j$ that is disjoint from $V$ or
$d_V(\beta_1,\beta_k)\leq M_0$.
\end{theorem}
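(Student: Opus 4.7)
The plan is to argue by contrapositive: assume every $\beta_j$ meets $V$ essentially, so that $\pi_V(\beta_j) \in \calC(V)$ is defined for every $j$, and show that $d_V(\beta_1,\beta_k)$ is bounded by a universal $M_0$.

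First I would establish a one-step Lipschitz estimate. Since consecutive $\beta_j, \beta_{j+1}$ are joined by an edge in $\calC(U)$, they are disjoint; from the surgery definition of $\pi_V$ in \secref{Curve-Cpmplex}, two disjoint curves that both cross $V$ have $\calC(V)$--projections lying in a set of diameter $O(1)$ (in the annular case this requires an extra argument via the annular cover, but is again $O(1)$). Thus $d_V(\beta_j,\beta_{j+1}) \leq B_0$ for a universal constant. The naive telescoping bound $d_V(\beta_1,\beta_k) \leq (k-1)B_0$ is insufficient since it grows with $k$, and this is the main obstacle.

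To upgrade to a uniform bound I would exploit the hyperbolicity of $\calC(U)$, introducing the set
\[
\Lambda_V \;=\; \bigl\{\alpha \in \calC(U) \ST \alpha \cap V = \emptyset\bigr\},
\]
which is nonempty because $V \subsetneq U$ (the boundary components of $V$ lie in $\Lambda_V$). The plan is to show that $\Lambda_V$ is quasi-convex in $\calC(U)$, and that the subsurface projection $\pi_V$ is, up to uniformly bounded error, a composite of the nearest-point projection $\Pi_{\Lambda_V}\from\calC(U)\to\Lambda_V$ with a Lipschitz map to $\calC(V)$. Granting this comparison, a standard consequence of hyperbolicity (strong contraction of nearest-point projection onto a quasi-convex subset) says that if a geodesic $\beta_1,\ldots,\beta_k$ in the hyperbolic space $\calC(U)$ stays at distance at least one from $\Lambda_V$, then $\Pi_{\Lambda_V}(\beta_1)$ and $\Pi_{\Lambda_V}(\beta_k)$ differ by at most a universal constant; transporting this via the comparison gives the required uniform bound on $d_V(\beta_1,\beta_k)$.

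The hardest step will be the comparison between $\pi_V$ and $\Pi_{\Lambda_V}$: one must check that surgering an arc of $\alpha \cap V$ along $\partial V$ produces a curve in $\Lambda_V$ that is coarsely a nearest point of $\alpha$ in $\Lambda_V$, with constants independent of $U$, $V$ and $\alpha$. The annular case $V = A$ needs a separate treatment because $\calC(A)$ is a quasi-line (or a horoball, in the augmented marking model): there I would pass to the annular cover $\widetilde A$, replace $\Lambda_V$ by the set of curves lifting to compact arcs, and use the contraction of projections onto horoballs in the hyperbolic model of $\calC(A)$. Once both cases are in hand, combining the contraction estimate with the one-step bound gives the theorem with an explicit $M_0$ depending only on the hyperbolicity constant of the curve complexes and on $B_0$.
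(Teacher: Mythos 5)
Your proposal has a structural flaw that cannot be repaired within the framework you set up. The set $\Lambda_V=\{\alpha\in\calC(U) : \alpha\cap V=\emptyset\}$ is a \emph{bounded} subset of $\calC(U)$: every curve disjoint from $V$ is disjoint from $\partial V$, hence lies at distance at most $1$ from $\partial V$, so $\diam_{\calC(U)}(\Lambda_V)\le 2$. Nearest-point projection onto a set of diameter $2$ carries no information, and your claimed factorization of $\pi_V$ as a Lipschitz map composed with $\Pi_{\Lambda_V}$ would force $\pi_V$ to have uniformly bounded image on all of $\calC(U)$, which is false (powers of a Dehn twist supported in $V$ already produce curves with arbitrarily large $\calC(V)$--projection distances while remaining at distance $\le 2$ from $\partial V$ in $\calC(U)$). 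Separately, the ``standard consequence of hyperbolicity'' you invoke is misstated: contraction of nearest-point projection onto a $Q$--quasiconvex set requires the geodesic to avoid a neighborhood of radius comparable to $Q$ plus a multiple of the hyperbolicity constant, not radius $1$. Lowering that threshold to $1$ --- i.e., to the hypothesis that every vertex merely \emph{intersects} $V$ --- is precisely the nontrivial content of the Bounded Geodesic Image Theorem; it cannot be extracted from soft quasiconvexity arguments in $\calC(U)$ and requires genuinely surface-theoretic input (Masur--Minsky's original argument, or Webb's later effective proof via surgery paths).

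You should also note that the paper does not reprove this theorem. It cites Masur--Minsky for $\calM(S)$, observes it is trivial for $\calP(S)$, and the only work done is the one case your proposal relegates to a parenthetical: when $\X=\AM(S)$ and $V$ is an annulus with core $\alpha$, so that $\calC(V)$ is a horoball. There the argument is a short computation: by the $\calM(S)$ version, either some $\beta_j$ misses $\alpha$ or $\twist_\alpha(\beta_1)\eadd\twist_\alpha(\beta_k)$; since each $\pi_V(\beta_i)$ lies on the boundary horocycle at height $1/{\sf B}$, the horoball distance is the logarithm of the twisting difference, hence $O(1)$. If your goal is to justify the statement as used in this paper, that reduction is the step to write out; a from-scratch proof of the underlying theorem is a different (and much harder) undertaking than your outline suggests.
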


Masur-Minsky proved this theorem for $\M(S)$ but it then holds for augmented 
markings as well (and trivially it holds for $\calP(S)$). The only case to check is when 
$V$ is an annulus with a core curve $\alpha$ and $\C(V)$ is a horoball. Then, 
applying the $\calM(S)$ version of \thmref{BGIT}, we have that either
some $\beta_j$ is disjoint from $\alpha$ or 
\[
\twist_\alpha(\beta_1) \eadd \twist_\alpha(\beta_k).
\]
Also, 
\[
\pi_V(\beta_i) = (\twist_\alpha(\beta_i), 1/B). 
\]
Hence, 
\[
d_\alpha(\beta_1, \beta_k) \eadd 
\log |\twist_\alpha(\beta_1) - \twist_\alpha(\beta_k)|=O(1). 
\]
Thus, \thmref{BGIT} holds for $\AM(S)$ as well. 

The other important property of the projection maps is the consistency
and realization result of Behrstock-Kleiner-Mosher-Minsky. First we recall that 
for subsurfaces $U,V$ the notation  $U\pitchfork V$ means that 
$U\cap V\neq\emptyset$ and 
neither is contained in the other. Consider the following 
\emph{consistency condition} on a tuple 
\[
{\sf z} \in \prod_U \C(U).
\]
Denote the coordinate of $\sf z$ in $\C(U)$ with $z_U$.
For a constant $M$, we say ${\sf z}$ is $M$--consistent  if
\begin{enumerate}
\item Whenever $U \pitchfork V$, 
$$
\min \big(d_U (z_U , \partial V ), d_V (z_V , \partial U )\big) \leq M.
$$
\item  If $V \subsetneq U$ , then 
$$
\min \big(d_U(z_U,\partial V), d_V (z_V , z_U) \big) \leq M.
$$ 
\end{enumerate}

To any $z \in \X$, the \emph{tuple of projections of $z$} is a tuple $\sf z$
so that $z_U = \pi_U(z)$. The following, in case $\X$ is $\calM(S)$ and 
$\calP(S)$,  is Theorem 4.3 in \cite{minsky:MR}. 
However it holds true for $\AM(S)$ as well.

\begin{theorem}[Consistency and Realization]  \label{Thm:Consistency}  
The tuples that are consistent are essentially those that are
tuples of projections. More precisely, there is a constant $M_1$, so that 
\begin{enumerate}
\item For $z \in \X$,  the tuple of projection $\sf z$ of $z$ is
$M_1$--consistent. 
\item If a tuple ${\sf z}$ is $M$--consistent for some uniform $M$,  
then there is a \emph{realization} $z \in \X$ so that
$$
\forall U \quad d_U(z,z_U)=O(1).
$$
\end{enumerate}
\end{theorem}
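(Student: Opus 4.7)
The theorem has two parts: consistency (1) of the projection tuple of an honest point $z \in \X$, and realization (2) of any consistent tuple by such a $z$. For $\X \in \{\calM(S), \calP(S)\}$ both parts are \cite{minsky:MR}, so the real task is the case $\X = \AM(S)$, whose only structural novelty is that annular curve complexes $\C(A)$ are horoballs $H \subset \R^2$ rather than $\ZZ$. In horoball coordinates $\pi_A(z) = (\twist_\alpha(z), h)$ with $h \in \{1/l_\alpha, 1/{\sf B}\}$, and distances in $H$ are hyperbolic.

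For part (1), when $U$ and $V$ are both non-annular, conditions (1) and (2) are exactly the Behrstock inequality and the bounded geodesic image theorem (\thmref{BGIT}), both of which hold verbatim for $\AM(S)$. When an annulus $A$ enters as $U$ or $V$, I would split each consistency inequality into a twist contribution (controlled by \thmref{BGIT} applied to the annular cover in the $\calM(S)$ sense) and a height contribution (a direct hyperbolic computation in $H$). Both pieces are uniformly bounded because $h$ is forced into a range determined by whether the core curve lies in the underlying pants decomposition $P_z$.

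For part (2), given an $M$-consistent tuple ${\sf z}$, I would first extract the set $P_{short}$ of curves $\alpha$ for which the height $h_\alpha$ of $z_{A_\alpha}$ satisfies $h_\alpha \gg 1/{\sf B}$. The key observation is that if $\alpha,\beta$ intersect then $A_\alpha \pitchfork A_\beta$ and $\pi_{A_\alpha}(\partial A_\beta)$ sits at height $1/{\sf B}$, so $d_{A_\alpha}(z_{A_\alpha}, \partial A_\beta) \geq \log(h_\alpha {\sf B}) - O(1)$, and symmetrically for $\beta$. Condition (1) of $M$-consistency therefore forces $P_{short}$ to be a disjoint collection, which I would extend arbitrarily to a pants decomposition $P$. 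Projecting each annular coordinate of ${\sf z}$ to its twist part yields an $O(M)$-consistent tuple in $\calM(S)$; applying the $\calM(S)$ realization of \cite{minsky:MR} produces a marking $\mu$, and I would promote $\mu$ to an augmented marking $z$ by setting $l_\alpha := \min(1/h_\alpha, {\sf B})$ for each $\alpha \in P$.

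The main obstacle is verifying $d_A(z, z_A) = O(1)$ at every annulus $A$. For $\alpha \in P_{short}$ this is immediate from the choice of $l_\alpha$, and for non-annular $U$ it is given by the $\calM(S)$ realization. For an annulus $A_\gamma$ with $\gamma \notin P$, one needs the height $h_\gamma$ within bounded hyperbolic distance of $1/{\sf B}$; I would apply consistency condition (2) with $V = A_\gamma$ and $U$ a subsurface containing a curve $\beta \in P$ transverse to $\gamma$. Since $\mu_U$ is close to $z_U$ and contains $\beta$, we get $d_U(z_U, \partial V) = O(1)$, hence $d_{A_\gamma}(z_{A_\gamma}, z_U) = O(1)$, which pins $h_\gamma$ near $1/{\sf B}$ as required. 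The technical heart is orchestrating these local consistency arguments globally: in particular, guaranteeing that the $\calM(S)$ realization $\mu$ can be chosen with $P_\mu \supseteq P_{short}$, which reduces to showing that the large twist coordinates around short curves cannot be realized by markings with different pants decompositions.
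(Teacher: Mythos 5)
Your overall architecture matches the paper's: reduce to $\AM(S)$, observe that the annular horoball data is the only new feature, extract the collection of ``short'' curves from the heights, show it is a multicurve, realize the remaining data by the lower-complexity realization theorem, and then promote to an augmented marking. (The paper realizes the non-annular part via the $\calP(S)$ case and builds transversals and lengths by hand, whereas you realize the twist-projected tuple via the $\calM(S)$ case and build only the lengths by hand; either routing is reasonable, and your disjointness argument for $P_{short}$ via the $A_\alpha\pitchfork A_\beta$ consistency condition is cleaner than the paper's detour through a filling subsurface.)

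However, the step you defer as ``the technical heart'' is a genuine gap, and the reduction you propose for it is wrong. A curve $\alpha\in P_{short}$ is distinguished only by its large height $h_\alpha$; its twist coordinate $t_\alpha$ may be $0$. After you project the annular coordinates to their twist parts, the resulting $\calM(S)$--tuple carries no trace of shortness, so nothing forces the realizing marking $\mu$ to have $\alpha$ in its base, and ``large twist coordinates around short curves'' is not the mechanism. The correct mechanism, and the content of the corresponding step in the paper, is consistency condition (2) of the \emph{original} $\AM(S)$--tuple applied with $V=A_\alpha$: for any $U$ meeting $\alpha$, the projection $\pi_{A_\alpha}(z_U)$ lies on the boundary of the horoball while $z_{A_\alpha}$ sits at height $h_\alpha\gg 1/{\sf B}$, so $d_{A_\alpha}(z_{A_\alpha},z_U)\geq \log(h_\alpha{\sf B})-O(1)$ is large and consistency forces $d_U(z_U,\alpha)\leq M$. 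Combined with $d_U(\mu,z_U)=O(1)$ this gives $d_U(\mu,\alpha)=O(1)$ for every $U$ meeting $\alpha$, which is exactly what lets one swap $P_{short}$ into the base of $\mu$ at bounded cost in every projection. Without this observation your promotion step does not close. Separately, your verification for $\gamma\notin P$ is muddled: consistency condition (2) is a statement of the form $\min(\cdot,\cdot)\leq M$, so knowing $d_U(z_U,\partial V)=O(1)$ yields no bound on $d_{A_\gamma}(z_{A_\gamma},z_U)$; in fact no consistency argument is needed there, since $h_\gamma\lmul 1/{\sf B}$ for $\gamma\notin P_{short}$ holds by the definition of $P_{short}$.
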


\begin{proof}
As mentioned before, this is known for $\calP(S)$ and $\calM(S)$. 
We verify the theorem in the case of $\AM(S)$. 

First we check part (1). For any $z \in \X$, let $P_z$ be the associated
pants decomposition. The non-annular projections of $z$ are
the same as projections of $P_z$ and by the $\calP(S)$--case of
\thmref{Consistency} these projections are consistent. 
Let $A$ be an annulus and $U$ be any other 
surface intersecting $A$. As in \thmref{BGIT}, 
the consistency still holds because 
\begin{itemize}
\item the image of $\pi_{U,A}$ is always on the boundary the horocycle
$\C(A)$. 
\item the distance in $\C(A)$ between two points on the boundary is the 
log of the twisting difference, and
\item the twisting distance is bounded as a consequence of the consistency 
theorem for $\M(S)$. 
\end{itemize}
That is, the consistency constant for $\AM(S)$ is no larger than
that of $\calM(S)$. 

To see part (2) we need to construct an augmented marking from
a consistent tuple ${\sf z}$. Use the realization part of  \thmref{Consistency} 
for $\calP(S)$ to construct a pants decomposition $P_0$ so that,
for every subsurface $U$ that is not an annulus, 
$d_{U}(P_0, z_U)=O(1)$. 
Still, for some curves $\alpha$, the projection of $P$ to $\C(\alpha)$
may not be close to $z_\alpha$. We show that the set
of such curves is a multi-curve. 

Let $\alpha$ and $\beta$ be two curves that intersect $P_0$ so that
$d_{\alpha}(P_0, z_\alpha)$ and $d_{\beta}(P_0, z_\beta)$
are both large. We show that $\alpha$ and $\beta$ are disjoint. Assume,
for contradiction, that they intersect and let $U$ be the surface
they fill. Then $z_U$ intersects either $\alpha$ to $\beta$ 
(say $\alpha$ without loss of generality). By construction of $P_0$
\[
d_{U} (P_0, z_U)=O(1) 
\quad \xrightarrow{\text{ $\pi_{U,A}$ is quasi-Lipschitz }}\quad 
d_{\alpha} (P_0, z_U)=O(1),
\]
and by consistency of coordinates of $\sf z$
\[
d_{\alpha}(z_U, z_\alpha)= O(1). 
\]
Now, the triangle inequality implies that 
$d_{\alpha} (P_0, z_\alpha)=O(1)$ which is a contradiction. 

We also note that, for any such curve $\alpha$ (where 
$d_\alpha(P_0, z_\alpha)$ is large) and for every subsurface $U$ intersecting 
$\alpha$, 
\[
d_{U}(\alpha, P_0)\eadd d_{U}(\alpha, z_U)= O(1).
\]
This is because, if $z_U$ is far from $\alpha$ in $\C(U)$, then 
the projection of $z_U$ to $\C(\alpha)$ is defined and
is ($\sf z$ is constant) near $z_\alpha$. But, as above, 
$d_U (P_0, z_U)=O(1)$ which implies $d_{\alpha} (P_0, z_U)=O(1)$. 
This is a contradiction. 

Let $\balpha$ be the multi curve consisting of all the curves 
above. Since, $d_U(P_0, \balpha)=O(1)$ for every non-annular 
subsurface $U$, $\balpha$ can be extended to a 
pants decomposition $P$ with $d_U(P_0, P)=O(1)$ for every non-annular 
subsurface $U$. That is, 
\[
d_V(P, z_V)=O(1), \qquad \text{for every subsurface $V$ intersecting $P$}.
\]

We now complete $P$ into an augmented marking. 
For a curve $\beta \in P$, write $z_\beta \in \C(\beta)$ as
\[
z_\beta = (t_\beta, l_\beta ),
\]
where $t_\beta$ is an integer and $l_\beta$ is a real number less than
the Bers constant. Let $\tau_\beta$ be a curve intersecting
$\beta$ minimally and disjoint from other curves in $P$ with
$\twist_\beta(\tau_\beta) \eadd t_\beta$ (this can always be achieved 
by applying Dehn twists around $\beta$). Now, 
\[
x= (P, \{\tau_\beta\}_{\beta \in P}, \{l_\beta\}_{\beta_P}).
\]
is the desired augmented marking. This is because, for $\beta \in P$,  
the projections to $\C(\beta)$ are close to $z_\beta$ by construction
and, for every other subsurface $V$, the projection of $x$ to $V$ 
is the same as the projection of $P$ to $V$. 
\end{proof}

The following corollary of this theorem will be useful later. 

\begin{corollary} \label{Cor:Far}
Let $U$ and $V$ be two subsurfaces where $\partial V$ intersects
$U$. The for any $x \in X$, if
\[
d_U(x_U, \partial V) > M_1
\qquad\text{then}\qquad
d_V(x_V, x_U)=O(1). 
\]
\end{corollary}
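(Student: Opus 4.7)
The plan is to deduce this directly from the consistency property of Theorem~\ref{Thm:Consistency}, which guarantees that the tuple of projections $\sf x$ associated to $x \in \X$ is $M_1$--consistent. The hypothesis that $\partial V$ intersects $U$ essentially rules out the cases $U \subseteq V$ and $U \cap V = \emptyset$, so exactly one of the two topological configurations appearing in the consistency definition must hold: either $V \subsetneq U$, or $U \pitchfork V$.

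In the nested case $V \subsetneq U$, the conclusion is immediate. Condition (2) of consistency, applied to $\sf x$, states that
\[
\min\bigl(d_U(x_U, \partial V),\, d_V(x_V, x_U)\bigr) \leq M_1.
\]
Since the first term exceeds $M_1$ by hypothesis, the second term is at most $M_1$, which is the desired $O(1)$ bound.

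In the overlap case $U \pitchfork V$, condition (1) of consistency yields $d_V(x_V, \partial U) \leq M_1$, and it suffices to show separately that $d_V(x_U, \partial U) = O(1)$, since then the triangle inequality in $\C(V)$ gives $d_V(x_V, x_U) \leq M_1 + O(1)$. For the latter, note that $d_U(x_U, \partial V) > M_1 \geq 1$ forces the curve $x_U$ to intersect $\partial V$ essentially, and in particular to cross $V$, so $\pi_V(x_U)$ is defined. The key geometric observation is that since $x_U$ is contained in $U$, the arcs $x_U \cap V$ all lie in $U \cap V$ with endpoints on $\partial V \cap U$, while the arcs $\partial U \cap V$ used to form $\pi_V(\partial U)$ lie on the frontier of $U \cap V$ inside $V$. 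These two families of arcs can be realized disjointly, and the standard surgery with subarcs of $\partial V$ therefore produces disjoint simple closed curves in $V$ representing $\pi_V(x_U)$ and $\pi_V(\partial U)$. Hence $d_V(x_U, \partial U) \leq 1$, and combining with the consistency bound completes the proof.

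The only non-routine step is the geometric disjointness claim in the overlap case; it is a standard subsurface-projection picture (arcs of a curve interior to $U$ never need to meet arcs of $\partial U$ inside $V$), but is the one place where the proof uses more than a direct quotation of Theorem~\ref{Thm:Consistency}.
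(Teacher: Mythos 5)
Your proof is correct and takes essentially the same route as the paper: the nested case $V \subsetneq U$ is read off from consistency condition (2), and the overlap case combines consistency condition (1) with the standard fact that the disjoint curves $x_U$ and $\partial U$ have nearby projections to $\C(V)$, finishing with the triangle inequality. The only difference is that the paper simply quotes that disjointness fact in one line, whereas you spell out the underlying surgery picture (and also make explicit why $\pi_V(x_U)$ is defined, a point the paper leaves implicit).
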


\begin{proof}
There are two cases. If $V \subsetneq U$, then this is immediate from
part two of the consistency condition.

Otherwise, $\partial U$ intersects $V$ and has a defined projection to
$\C(V)$. Since $x_U$ is disjoint from $\partial U$, $d_V(x_U, \partial U)=O(1)$. 
On the other hand the assumption of the corollary and part one of the consistency condition say that  $d_V(x_V, \partial U)=O(1)$. 
The corollary follows from a the triangle inequality. 
\end{proof}

\subsection{Product regions} \label{Sec:Standard}
For every subsurface $W$ of $S$,  we have a projection map
\[
\phi_W \from \X \to \X(W),
\]
defined by \thmref{Consistency}. Namely since the projections of a 
point $x \in \X$ to subsurfaces of $S$ are consistent, the projections to 
subsurfaces of $W$ are also consistent and hence can be
realized by a point in $\X(W)$. For points $x,y \in \X$, we define
\[
d_{\X(W)}(x,y) = d_{\X(W)}\big(\phi_W(x), \phi_W(y) \big)
\]
The subsurface $W$ is allowed to be an annulus in which case, $\X(W) = \C(W)$.

For a curve system $\balpha$, let $\X_\balpha$ be the set of points in 
$x \in \X$ where $\balpha$ is a subset of the pants decomposition $P_x$ 
associated to $x$. Consider a point $x \in \X_\balpha$.
Since every curve in $x$ is disjoint from $\balpha$, the projection of $x$ 
to any subsurface intersecting $\balpha$ is distance at most $2$ from the the 
projection of $\balpha$. Therefore for a sufficiently large threshold $T$ and 
for any $x,y \in \X_\alpha$, the set $\calW_T(x, y)$ consists only of 
subsurfaces disjoint from $\balpha$, each of which is contained in some 
component $W$ of $S \setminus \balpha$. Therefore, the map
\[
\Phi 
\from \X_\balpha \to \prod_W \X(W)
\qquad\text{where}\qquad
\Phi =\prod_W \phi_W
\]
is a quasi-isometry (the image is coarsely onto). Here the product space 
is equipped with the $L^1$--metric. A version of this theorem for
\Teich space was first proved by Minsky \cite{minsky:PR} and is
known as the product regions theorem. We see that the fact that $\Phi$ is 
a quasi-isometry is essentially immediate from the distance formula. But the 
proof of the distance formula in \cite{rafi:CM} used Minsky's product regions 
theorem. 

There is also a projection map $\phi_{\X_\balpha} \from \X \to \X_\balpha$; 
choosing a point in each $\X(W)$ for $W$ a component of $S\setminus\alpha$ and taking a union results in a point
in $\X_\balpha$. We define $d_{\X_\alpha}$ to mean the distance 
between projections to $\X_\balpha$. That is, for $x,y \in \X$, we define
\[
d_{\X_\balpha}(x,y) := 
d_\X \big(\phi_{\X_\balpha}(x), \phi_{\X_\balpha} (y) \big). 
\]
Note that the projection of $\phi_{\X_\balpha}(x)$ to $\X(W)$
is close to $\phi_W(x)$ because $\phi_W$ was defined using the consistency
result. Therefore, 
\begin{equation} \label{Eq:Xa}
d_{\X_\balpha} (x, y) \asymp 
\sum_W  d_{\X(W)} (x, y),
\end{equation}
where the sum is over components of $S \setminus \balpha$. 

We finish with an estimate of $d_\X$ using the projection distances
$d_{\X_\balpha}$. 

\begin{lemma}
\label{Lem:SumDistance}
Suppose $x,y\in\X$ and $\alpha_1,\ldots \alpha_k$ is a geodesic in $\calC(S)$  
joining $\alpha_1 \in P_x$ to $\alpha_k \in P_y$. Then 
\[
d_\X (x, y) \lmul \sum_{i=1}^k d_{\X_{\alpha_i}}(x,y).
\]
\end{lemma}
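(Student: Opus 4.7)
The plan is to unpack $d_\X(x,y)$ via the distance formula \eqnref{Distance} and show that each surviving subsurface contribution $d_W(x,y)$ is absorbed, up to multiplicative constants, by one of the product-region distances $d_{\X_{\alpha_i}}(x,y)$. I would split the sum $\sum_{W \in \calW_T(x,y)} d_W(x,y)$ into the contributions of proper subsurfaces $W \subsetneq S$ and the top-level term $d_S(x,y)$ (when $S \in \calW_T(x,y)$), and treat these two pieces separately.

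For each proper $W \in \calW_T(x,y)$, apply the Bounded Geodesic Image Theorem (\thmref{BGIT}) with $U = S$ and $V = W$ to the geodesic $\alpha_1, \ldots, \alpha_k$. Either some $\alpha_i$ is disjoint from $W$, or every $\alpha_j$ intersects $W$ and $d_W(\alpha_1, \alpha_k) \leq M_0$. In the latter case, since $\alpha_1 \in P_x$ and $\alpha_k \in P_y$, the diameter bound on the projection of a pants decomposition gives $d_W(x, \alpha_1) = O(1)$ and $d_W(y, \alpha_k) = O(1)$, whence $d_W(x,y) \leq M_0 + O(1)$; this contradicts $d_W(x,y) \geq T$ once the threshold $T$ is chosen large enough. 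Thus some $\alpha_i$ is disjoint from $W$, and since $W$ is connected it lies in a single component $W'$ of $S \sm \alpha_i$. By \eqnref{Xa} and the distance formula applied inside $\X(W')$, the contribution $d_W(x,y)$ appears in $d_{\X(W')}(x,y)$ and hence in $d_{\X_{\alpha_i}}(x,y)$. Because each such $W$ is counted in at least one (and at most a uniformly bounded number) of the $d_{\X_{\alpha_i}}$, summing yields
\[
\sum_{\substack{W \in \calW_T(x,y) \\ W \subsetneq S}} d_W(x,y) \lmul \sum_{i=1}^k d_{\X_{\alpha_i}}(x,y).
\]

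The main obstacle is the top-level term $d_S(x,y)$, to which \thmref{BGIT} does not apply since it requires $V \subsetneq U$. Since $\pi_S(x)$ is represented by any curve of $P_x$ (and similarly for $y$), the chain $\alpha_1 \in P_x$, $\alpha_k \in P_y$, together with $d_{\calC(S)}(\alpha_1, \alpha_k) = k-1$ gives $d_S(x,y) \ladd k$. One then needs $k \lmul \sum_{i=1}^k d_{\X_{\alpha_i}}(x,y)$, which I would establish by using the Realization Theorem (\thmref{Consistency}) to construct intermediate points $z_i \in \X_{\alpha_i}$ whose coordinate tuples interpolate between those of $x$ and $y$. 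The triangle inequality $d_\X(x,y) \lmul \sum_i d_\X(z_{i-1}, z_i)$ then distributes the top-level distance across the consecutive product regions, with each $d_\X(z_{i-1}, z_i)$ controlled by $d_{\X_{\alpha_i}}(x,y)$ up to a bounded error coming from the move $\alpha_{i-1} \to \alpha_i$ in $\calC(S)$. This telescoping step, which absorbs $d_S(x,y)$ into the sum on the right, is the most delicate piece of the proof.
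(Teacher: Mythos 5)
Your handling of the proper subsurfaces is precisely the paper's proof: apply \thmref{BGIT} to the geodesic $\alpha_1,\ldots,\alpha_k$ to conclude that every $W\subsetneq S$ with $d_W(x,y)$ above threshold must be disjoint from some $\alpha_j$, hence lies in a component of $S\sm\alpha_j$ and is accounted for by $d_{\X_{\alpha_j}}(x,y)$ via \eqnref{Xa}. (Your parenthetical worry about overcounting is harmless: for an upper bound, counting a subsurface in several terms on the right only helps.) That is the entire content of the paper's argument, which simply asserts that \emph{every} $U$ with $d_U(x,y)\geq M_0$ is disjoint from some $\alpha_j$ --- a claim that, as you noticed, cannot apply to $U=S$.

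The difficulty is your second step. The inequality $k\lmul\sum_{i=1}^k d_{\X_{\alpha_i}}(x,y)$ that your telescoping argument is meant to deliver is false in general, so no choice of interpolating realizations $z_i$ can produce it. Concretely, let $\psi$ be a pseudo-Anosov all of whose proper subsurface projections are uniformly bounded (such $\psi$ exist on every surface) and take $y=\psi^N(x)$. Then $k\eadd d_S(x,y)\asymp N$ is unbounded, while each $d_{\X_{\alpha_i}}(x,y)$ is a sum of proper subsurface projection distances exceeding the threshold and hence, by \eqnref{Threshold}, stays uniformly bounded in $N$ (it vanishes outright once the threshold exceeds the bound on the projections). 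So the ``delicate piece'' you defer is not delicate but unprovable, and the purely multiplicative inequality of the lemma cannot hold if the $\C(S)$--term is retained for large $k$. The correct repair is the opposite of what you propose: bound the top-level term by $d_S(x,y)\ladd k$ and either add $O(k)$ to the right-hand side of the statement, or observe that the lemma is only ever invoked with $k=O(1)$ (as in \lemref{Hausdorff}), in which case $d_S(x,y)$ falls below the threshold $T$ and never enters the distance formula. You correctly isolated the term the paper's proof drops; you just should not try to dominate $k$ by the right-hand side.
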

\begin{proof}
Let $M_0$ be the constant in \thmref{BGIT}. Then any subsurface $U$ with 
$d_U(x,y)\geq M_0$,  is disjoint  from some $\alpha_j$ and so is 
a subset of $S\setminus \alpha_j$.  Thus it appears as a term in 
some $d_{\X(W)}(x,y)$, where $W$ is a component of 
$S\setminus \alpha_j$. We are done by \eqnref{Xa}. 
\end{proof}

\section{Efficient paths are nearly geodesics}
\label{Sec:Efficient}
In this section we show that efficient paths are nearly geodesics
in the space $\X$. To do this, we use the construction of
Bestvina-Bromberg-Fujiwara \cite{bestvina:GQ} which gives a quasi-isometric
embedding of the mapping class group to a finite product of Gromov hyperbolic 
spaces. Their construction is completely axiomatic and works, essentially without 
modifications, for any of our spaces $\X$. We first review their construction. 

\subsection{A quasi-tree of curve complexes} \label{Sec:Y}
We summarize some statements in \cite{bestvina:GQ}.
Fix a threshold $K$.  Let $\bY$ be a collection of subsurfaces of $S$ with the 
property that if  $V,W \in \bY$ then 
\begin{itemize}
\item $V\pitchfork W$, and
\item every curve in $W$ intersects $V$. 
Hence, the domain of $\pi_{W,V}$ is $\C(W)$. 
\end{itemize}
Define $\bP_K(\bY)$ to be 
a graph whose vertices are elements of $\bY$ and two vertices $V,W$ are 
connected with an edge if for all $U\in \bY$
\[d_U(V,W)\leq K.\]
In \cite{bestvina:GQ} it is shown that $\bP_K(\bY)$ is quasi-isometric to a tree. 

Define $\C(\bY)$ to be  the space obtained from $\bP_K(\bY)$ by
attaching a copy of $\C(W)$ for every vertex $W\in \bY$ as follows:
The vertex set of $\C(\bY)$ is the union of vertex sets of $\C(W)$, 
$W \in \bY$. If $V$ and $W$ are joined by an edge in $\bP_K(\bY)$, 
then we join the vertex $\pi_V (\partial W)$ in $\calC(V)$ to 
$\pi_W (\partial V)$ in $\calC(W)$. It follows from \cite[Theorem E]{bestvina:GQ},
using the fact that each space $\C(W)$ is hyperbolic, that the resulting space 
$\calC(\bY)$ is also hyperbolic. 

Furthermore, in \cite{bestvina:GQ}, the authors show that when $S$
is connected:
\begin{itemize}
\item  the subsurfaces of $S$ 
can be decomposed into finitely many disjoint subsets $\bY^1, \ldots, \bY^k$ 
each having the transversality property mentioned above.
\item there is a finite index subgroup $\Gamma$ in $\Mod(S)$ which fixes each 
$\bY^i$
\end{itemize}
When $S$ is disconnected, we decompose the subsurfaces of each component
as above let $\bY^1, \ldots, \bY^k$ be the list of all such collections. 

We can assume $\bY^j$ contain only essential subsurfaces, that is, the thrice 
punctured spheres are always excluded, and in the case $\X$ is the pants 
graph, annuli are also excluded. Let 
\[
\bC=\prod_{j=1}^k \calC(\bY^j)
\] 
equipped with the $L^1$--metric. Thus  $\bC$ is a  product of finitely 
many hyperbolic spaces.

We define a projection map $\Psi^j \from \X \to \C(\bY^j)$
as follows:  For $x \in \X$, choose a subsurface $W_j \in \bY^j$ that minimizes 
\[ 
\max _{\alpha \in P_x} \, i(\alpha,\partial W_j),
\]
where the minimum is over $W_j\in \bY^j$. Define
\[
\Psi^j(x) = x_{W_j}.
\]
Recall that $x_{W_j}$ is the projection of $x$ to the curve complex
$\C(W_j)$. That is, $x_{W_j}$ is a point in $\C(W_j)$ and hence is a
point in $\C(\bY^j)$. 

\begin{remark} \label{Rem:Difference}
Our definition of the projection is slightly different from that of \cite{bestvina:GQ}.
They use the action of the mapping class group to define the projection. However,
in case $\bY^j$ consists of annuli and $\X$ is the augmented marking space,
$x_{W_j}$ is a point in the horoball $H\subset \HH^2$ and not a curve. 
In particular,  the action of the mapping class group is not coarsely  transitive. 
But in the other two cases, the two definitions match. 
\end{remark}

We claim that the consistency condition (\secref{Consistency}) 
shows that this map is coarsely well-defined; that is, up to a bounded distance 
in $\bC$,  the image is independent of the choice of $W_j$. 

First we note that since the finite index subgroup $\Gamma$ acts preserving 
each $\C(\bY^j)$ the above minimum is uniformly bounded by a constant 
independent of $x$.  We need to check that distinct choices $W_j$ and $V_j$ 
give points at bounded distance in $\C(\bY^j)$. First, we check,
for all $U \in \bY^j$, that  
\[
d_U(W_j,V_j)=O(1).
\] 
Choose an $\alpha \in x$ that intersect $U$. 
Since $i(\alpha,W_j)=O(1)$,  we have 
\[
d_U(\alpha , W_j)=O(1)
\] 
and similarly for $\alpha$ and $V_j$. The claim now follows from the 
triangle inequality. Therefore, $W_j$ and $V_j$ are connected by an edge
in $\bP_K(\bY^j)$.  To show that $x_{W_j}$ and $x_{V_j}$ are close 
in $\C(\bY^j)$, we need that 
\[
d_{V_j}(x_{V_j}, \partial W_j)=O(1)
\quad\text{and}\quad
d_{W_j}(x_{W_j}, \partial V_j)=O(1).
\]
This holds since for every curve $\alpha \in x$, 
\[
i(\alpha,\partial W_j)=O(1)
\quad\text{and}\quad
i(\alpha,\partial V_j)=O(1).
\]

We often denote $\Psi^j(x)$ by $x_j$. 
Now define a map
\[
\Psi \from \X \to \bC
\qquad\text{with} \qquad
\Psi = \prod_j \Psi^j. 
\]
  
The following theorem is proven in \cite{bestvina:GQ} for the mapping class group. 

\begin{lemma} \label{Lem:BBF}
There is $K' >K$ so that, for every $x, y \in \X$, and 
\[
d_\bC \big(\Psi(x), \Psi(y)\big)\geq 
\frac{1}{2}\sum_{W\in \calW_{K'}(x,y)} d_W(x,y),
\]
\end{lemma}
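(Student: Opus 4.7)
The plan is to reduce the inequality to a lower bound on distance in each factor $\calC(\bY^j)$ and then invoke the distance estimate that Bestvina--Bromberg--Fujiwara established for their projection complexes. Since the families $\bY^1,\ldots,\bY^k$ together exhaust the essential subsurfaces of $S$, every $W\in\calW_{K'}(x,y)$ lies in exactly one $\bY^j$. Writing $\calW_{K'}^j(x,y):=\calW_{K'}(x,y)\cap\bY^j$ and exploiting the $L^1$ metric on $\bC$, the lemma reduces to showing that for each $j$ and each sufficiently large $K'$,
\[
d_{\calC(\bY^j)}\big(x_j,y_j\big)\;\geq\;\tfrac{1}{2}\sum_{W\in\calW_{K'}^j(x,y)}d_W(x,y).
\]
Summing over $j$ then yields the lemma.

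For a fixed $j$, I would first verify that our projection $\Psi^j$ agrees up to a uniformly bounded error with a BBF-style basepoint choice in $\calC(\bY^j)$. Recall $\Psi^j(x)=x_{W_j}$ where $W_j$ minimizes $\max_{\alpha\in P_x}i(\alpha,\partial W_j)$; since a finite-index subgroup $\Gamma\leq\Mod(S)$ preserves $\bY^j$, this minimum is bounded uniformly in $x$. Consequently $d_U(P_x,W_j)=O(1)$ for every $U\in\bY^j$, and the analogous bound holds for $V_j$ and $P_y$. Combined with \thmref{Consistency} and \corref{Far}, this means that for every $W\in\bY^j$ with $d_W(x,y)$ sufficiently large relative to $M_1$, the projection of $W_j$ (resp.\ $V_j$) to $\calC(W)$ lies within $O(1)$ of $\pi_W(x)$ (resp.\ $\pi_W(y)$). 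Thus the endpoints $x_j,y_j$ serve as legitimate basepoints in $\calC(\bY^j)$ for the BBF distance formula.

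Next I would apply the distance lower bound of \cite{bestvina:GQ} in each $\calC(\bY^j)$: any geodesic from $x_j$ to $y_j$ is forced by the tree-like structure of $\bP_K(\bY^j)$ to pass through each vertex $W$ on the tree-path from $W_j$ to $V_j$, and every such $W$ with $d_W(x,y)\geq K'$ contributes a segment of length at least $d_W(x,y)-O(1)$ to the total. Choosing $K'$ larger than both $K$ and $M_1$, and larger than the BBF additive constants, lets these $O(1)$ errors be absorbed by the $\tfrac12$ in front of the sum.

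The main obstacle is the mismatch flagged in \remref{Difference}: our $\Psi^j$ is defined via a minimization rather than by a group-equivariant choice, and when $\bY^j$ consists of annuli with $\X=\AM(S)$, the point $x_{W_j}$ lives in a horoball rather than being a curve. One must check that the BBF projection complex machinery, and in particular the lower distance estimate, survives this substitution, which again boils down to the consistency and bounded-geodesic-image arguments used above. Once this is in place the sum over $j$ closes the proof.
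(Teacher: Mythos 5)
Your proposal is correct and takes essentially the same route as the paper, which simply observes that the proof in \cite{bestvina:GQ} ``uses only the hyperbolicity of each curve complex and the consistency condition'' and therefore works verbatim, omitting the details. The factor-by-factor reduction via the $L^1$--metric, the verification that $\Psi^j$ furnishes legitimate basepoints up to $O(1)$ (which the paper carries out in the discussion immediately preceding the lemma), and the absorption of the additive errors into the factor $\tfrac12$ by taking $K'$ large are exactly the points the paper leaves implicit in its citation of the BBF distance estimate.
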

The proof of this lemma in \cite{bestvina:GQ} uses only the hyperbolicity of 
each curve complex and the consistency condition detailed in \secref{Consistency}
and works verbatim in our case. Hence we omit the proof. As a consequence
we have the following theorem which is also proven in \cite{bestvina:GQ}
for the mapping class group. We give a proof here because our
projection maps $\psi^j$ are defined differently from \cite{bestvina:GQ}. 

\begin{theorem} \label{Thm:Embedding}
The map $\Psi$ is a quasi-isometric embedding from $\X$ into $\bC$.
\end{theorem}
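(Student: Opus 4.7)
The plan is to establish the two-sided bound
\[
d_\X(x,y) \lmul d_\bC(\Psi(x),\Psi(y)) \lmul d_\X(x,y).
\]
The lower bound is essentially immediate from \lemref{BBF}: that lemma gives
\[
d_\bC(\Psi(x),\Psi(y)) \geq \tfrac12 \sum_{W\in\calW_{K'}(x,y)} d_W(x,y),
\]
and by the threshold invariance \eqnref{Threshold} the right-hand side is $\emul d_\X(x,y)$ (we may replace the fixed threshold $T$ used to define $d_\X$ by $K'$ at the cost of uniform constants).

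\textbf{Upper bound via factor-wise Lipschitzness.} Since $\bC$ carries the $L^1$ metric and the number of factors $k$ is a uniform constant, it suffices to show that each coordinate projection $\Psi^j\from \X\to \calC(\bY^j)$ is coarsely Lipschitz. I would in fact aim for the refined estimate
\[
d_{\calC(\bY^j)}\big(\Psi^j(x),\Psi^j(y)\big) \lmul \sum_{W\in \bY^j\cap \calW_T(x,y)} d_W(x,y),
\]
after which summing over $j$ and applying the distance formula \eqnref{Distance} in $\X$ yields $d_\bC(\Psi(x),\Psi(y))\lmul d_\X(x,y)$.

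\textbf{The per-factor estimate.} The argument follows \cite{bestvina:GQ} almost verbatim. Their construction shows that $\calC(\bY^j)$ admits a distance formula of the form
\[
d_{\calC(\bY^j)}(p,q)\emul \sum_{W\in \bY^j}\bigl[d_W(p,q)\bigr]_L,
\]
where $[\param]_L$ truncates below a threshold $L$. Applying this to $p=\Psi^j(x)$ and $q=\Psi^j(y)$, and noting that by definition of $\Psi^j$ together with the consistency \thmref{Consistency} one has $d_W(\Psi^j(x),x) \eadd 0$ and $d_W(\Psi^j(y),y) \eadd 0$ for every $W\in\bY^j$, reduces the right-hand side to a sum of terms $d_W(x,y)$. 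Choosing $L\geq T$ and truncating produces exactly the inequality claimed above.

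\textbf{Main obstacle.} The one subtlety compared to \cite{bestvina:GQ} is that, as flagged in \remref{Difference}, we define $\Psi^j(x)=x_{W_j(x)}$ for a minimizing $W_j(x)\in\bY^j$ rather than by a $\Mod(S)$-orbit. I would verify that this is coarsely well-defined—for two minimizers $W,V\in\bY^j$, each $\alpha\in P_x$ satisfies $i(\alpha,\partial W)=O(1)$ and $i(\alpha,\partial V)=O(1)$, so $d_U(W,V)=O(1)$ for every $U\in \bY^j$, putting $W,V$ adjacent in $\bP_K(\bY^j)$, and then the consistency conditions $d_V(x_V,\partial W)=O(1)$, $d_W(x_W,\partial V)=O(1)$ identify $\Psi^j(x)$ with either $x_W$ or $x_V$ up to uniformly bounded distance in $\calC(\bY^j)$—and that each $W\in\bY^j$ gives $d_W(\Psi^j(x),x)=O(1)$, which is the ingredient needed to substitute into the BBF distance formula. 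These verifications are exactly the consistency bookkeeping already carried out in the paragraphs preceding the statement, so the proof should amount to assembling these pieces.
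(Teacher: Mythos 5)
Your proof is correct, and the lower bound is handled exactly as in the paper (Lemma~\ref{Lem:BBF} plus threshold-invariance of the distance formula). Where you diverge is in the upper bound. The paper establishes that each $\Psi^j$ is coarsely Lipschitz by the direct local argument: if $d_\X(x,x')=O(1)$, then the minimizers $W_j,W_j'$ satisfy $d_U(W_j,W_j')=O(1)$ for all $U\in\bY^j$ (so they are adjacent in $\bP_K(\bY^j)$) and $d_{W_j}(x_{W_j},\partial W_j')=O(1)$, $d_{W_j'}(x_{W_j'},\partial W_j)=O(1)$, giving $d_{\calC(\bY^j)}(\Psi^j(x),\Psi^j(x'))=O(1)$, and then chains along a coarse path in $\X$. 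You instead invoke a distance formula for $\calC(\bY^j)$ and reduce to the $\X$ distance formula by observing $d_W(\Psi^j(x),x)=O(1)$ for all $W\in\bY^j$. Both routes rest on essentially the same consistency bookkeeping (it is exactly the estimate $d_W(x,x_{W_j})=O(1)$ that the paper proves in the well-definedness discussion and again in Lemma~\ref{Lem:Center}); the paper's chain argument is more elementary and self-contained, while yours is more explicit (a per-factor estimate that sums cleanly) but leans on a distance formula for $\calC(\bY^j)$ that \cite{bestvina:GQ} does not state in exactly that form — it follows from their Lemma~3.13 on the structure of geodesics in $\calC(\bY)$, which this paper uses elsewhere (Lemma~\ref{Lem:Center}), so the ingredient is available, but you should note that a short derivation is needed. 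One small caution: when $\bY^j$ consists of annuli and $\X=\calA\calM(S)$, $\Psi^j(x)$ lies in a horoball rather than being a curve, so $d_W(\Psi^j(x),\cdot)$ for other $W\in\bY^j$ needs to be interpreted via $\partial W_j$ (as in Remark~\ref{Rem:Difference}); the paper's local argument sidesteps this issue by only ever projecting actual curves of $x$.
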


\begin{proof}
For $x \in \X$, the map $\Psi$ is defined by 
\[
\Psi(x)= \big( x_{W_1}, \ldots,  x_{W_k} \big),
\]
for some subsurface $W_j \in \bY^j$, $1\leq j \leq k$. 
We show $\Psi^j$ is quasi-Lipschitz which essentially the same 
as the proof the $\Psi^j$ is coarsely well defined. 
If $d_\X(x,x') = O(1)$, then 
\[
d_U(x,x')=O(1) \qquad \forall U.
\]
We have $i(x, W_j)=O(1)$, and so 
\[
d_U(x, W_j)=O(1)
\quad\text{and similarly}\quad
d_U(x',W_j')=O(1).
\]  
Together this gives   
\[
d_U(W_j,W_j') = O(1),
\]
which implies that $W_j$ and $W_j'$ are connected by an edge in
$\bP_K(\bY^j)$. We also know that $x$ is close to $x'$ which has
bounded intersection with $\partial W_j'$.
Hence
\[
d_{W_j}(x_{W_j}, \partial W_j') = O(1),
\quad\text{and similarly}\quad
d_{W_j'}(x_{W_j'}, \partial W_j) = O(1).
\]
Therefore, 
\[
d_{\C(\bY^j)}\big(\Psi^j(x),\Psi^j(x')\big)=O(1).
\]
This means, the maps $\Psi^j$ are quasi-Lipschitz, and so is $\Psi$. 

We need to find a lower bound for the distance between $\Psi(x)$
and $\Psi(y)$. By \lemref{BBF}, there is $K' >K$ so that, 
\[
d_\bC\big(\Psi(x), \Psi(y)\big)\geq 
\frac{1}{2}\sum_{W\in \calW_{K'}(x,y)} d_W(x,y),
\]
and since the distance formula works for any threshold,
\[
d_\X(x,y) \emul \sum_{W\in \calW_{K'}(x,y)} d_W(x,y).
\]
Hence
\[
d_\bC\big(\Psi(x), \Psi(y)\big) \gmul d_\X(x,y).
\]
This finishes the proof. 
\end{proof}

\subsection{Preferred paths and efficient paths}
\label{Sec:Preferred}
Since the space $\X$ is not hyperbolic, a quasi-geodesic connecting
two point in $\X$ may not be well behaved. Instead, we define
a notion of preferred path connecting two points in $\X$.

\begin{definition} \label{Def:Preferred}
Given $x,y \in \X$, we say a quasi-geodesic $\omega \from [a,b] \to \X$ is 
a \emph{preferred path} connecting $x$ to $y$ if, 
\begin{itemize}
\item $\omega(a) = x$, 
$\omega(b) = y$
\item  for every subsurface $U$, the map 
\[
\omega_U = \pi_U \circ \omega \from [a,b] \to \calC(U)
\]
is an unparametrized quasi-geodesic.
\end{itemize}
\end{definition}

\begin{lemma}
For any $x,y \in \X$ there is a preferred path connecting $x$ to $y$.
\end{lemma}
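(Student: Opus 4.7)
The plan is to prescribe the projections of $\omega$ to every curve complex and then realize them pointwise via the Realization Theorem (\thmref{Consistency}). For each essential subsurface $U$, fix a $\C(U)$--geodesic $g_U \from [0, d_U(x,y)] \to \C(U)$ from $\pi_U(x)$ to $\pi_U(y)$. Let $\calW = \calW_T(x,y)$ be the finite collection of subsurfaces contributing to the distance formula and set $L = \sum_{U \in \calW} d_U(x,y) \emul d_\X(x,y)$. The construction will assign to each $U \in \calW$ a disjoint subinterval $I_U \subset [0,L]$ of length $d_U(x,y)$, arranged in some linear order, and define a reparametrization $t_U$ which equals $0$ before $I_U$, equals $d_U(x,y)$ after $I_U$, and grows linearly on $I_U$. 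For $U \notin \calW$ set $t_U$ constant so that $g_U(t_U) \approx \pi_U(x) \approx \pi_U(y)$. At each $t$ this yields a tuple $z^t = (g_U(t_U))_U$, and if this tuple is $M$--consistent then \thmref{Consistency} delivers $\omega(t) \in \X$ with $d_U(\omega(t), g_U(t_U)) = O(1)$ for every $U$.

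The crux is choosing the ordering on $\calW$ so that $z^t$ is consistent at every intermediate $t$. Declare $U$ to come before $V$ whenever $d_V(\pi_V(x), \partial U)$ is large; by \corref{Far} this expresses the fact that motion along $g_V$ cannot begin until $\partial U$ has been ``crossed'' by motion along $g_U$. The Bounded Geodesic Image Theorem \thmref{BGIT} together with the consistency apparatus ensures that this partial order has no cycles, so it extends to a linear order. Then for $U \pitchfork V$ with $U$ before $V$, at each time $t$ either $I_V$ has not yet started so $g_V(t_V) = \pi_V(x)$ lies close to $\partial U$, or $I_U$ has finished so $g_U(t_U) = \pi_U(y)$ lies close to $\partial V$; either way condition (1) of $M$--consistency holds. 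Nested pairs are handled analogously using condition (2). Consequently $z^t$ is $M$--consistent for a uniform $M$ and we obtain $\omega(t) \in \X$.

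Finally, one must check that $\omega$ is a quasi-geodesic whose projections $\omega_U$ are unparametrized quasi-geodesics. For $U \in \calW$, $\omega_U$ traces $g_U$ along $I_U$ up to bounded error and stays within bounded distance of an endpoint of $g_U$ outside $I_U$, hence is an unparametrized quasi-geodesic. For $U \notin \calW$ the projection stays in a bounded set and the condition is trivial. The distance formula then gives $d_\X(\omega(s), \omega(t)) \emul |t - s|$ for all $s \leq t$, so $\omega$ is a quasi-geodesic from $x$ to $y$. The main obstacle is producing the correct linear ordering on $\calW$ so that consistency holds at every intermediate $t$; this is exactly the role played by the time-ordered structure of Masur--Minsky hierarchies, so an alternative (more concise) version of the proof simply cites the existence of hierarchy paths in $\calM(S)$ from \cite{minsky:CCII} and analogous hierarchy constructions in $\calP(S)$ and $\AM(S)$ from \cite{brock:WV} and \cite{rafi:HT, durham:AM} respectively.
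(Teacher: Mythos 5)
The paper's own proof is a one-line citation: a resolution of a Masur--Minsky hierarchy is a preferred path for $\calM(S)$ and $\calP(S)$, and for $\AM(S)$ the path is constructed in \cite[Theorem~5.7]{rafi:CM}. Your closing sentence, which falls back on exactly these citations, therefore coincides with the paper's argument and would be accepted as is.

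The explicit construction that occupies the body of your proposal, however, has a genuine gap in the nested case, and it is precisely the difficulty that hierarchies exist to resolve. Suppose $V \subsetneq U$ with both $U, V \in \calW_T(x,y)$ and with $\pi_U(\partial V)$ occurring in the \emph{interior} of the geodesic $[x,y]_U$, far from both endpoints (this happens whenever $d_V(x,y)$ is large, by \thmref{BGIT}). Your scheme assigns $U$ and $V$ \emph{disjoint} intervals and pins $g_U(t_U)$ at an endpoint of $[x,y]_U$ for all $t \notin I_U$. So during $I_V$ the $U$--coordinate is $x_U$ or $y_U$, both of which satisfy $d_U(\,\cdot\,,\partial V) > M$; consistency condition (2) then forces $d_V(z^t_V, x_U) \leq M$ (say) for all $t \in I_V$. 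But \corref{Far} pins $x_U$ near $x_V$ in $\C(V)$, while $z^t_V$ sweeps across all of $[x,y]_V$, a set of diameter $\geq T$. So the tuple $z^t$ is \emph{not} uniformly consistent and the Realization Theorem does not apply. Fixing this requires interrupting the traversal of $g_U$ at the moment it reaches the vertex disjoint from $V$ and inserting $I_V$ there --- i.e.\ the recursive, non-linearly-ordered scheduling of component domains that constitutes a hierarchy. Two smaller issues: your stated time-order convention (``$U$ before $V$ whenever $d_V(\pi_V(x),\partial U)$ is large'') is reversed relative to how you then use it (you need $\pi_V(\partial U)$ \emph{close} to $\pi_V(x)$ when $U$ precedes $V$), and the acyclicity of the resulting partial order is a nontrivial lemma (Behrstock, Masur--Minsky) that you assert rather than prove. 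None of this affects the validity of the citation-based version, which is what the paper does.
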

\begin{proof}
In the case of the mapping class group and the pants complex a resolution of a 
hierarchy is a preferred path \cite{minsky:CCII}. In the case that  $\X$ is 
\Teich space with the \Teich metric, such a path is constructed
in \cite[Theorem 5.7]{rafi:CM}. 
\end{proof}
 
\begin{remark}
It is known that the image of a \Teich geodesic is not a always preferred path 
(there may be backtracking in annuli).  It is unknown if the image of 
Weil-Petersson geodesics or a geodesic in the mapping class group is a
preferred paths.
\end{remark}

Now let $U$ be a subsurface of $S$ and 
let $x_U$ and $y_U$ be the projections of $x$ and $y$ to $\C(U)$. 
Denote a geodesic segment in $\calC(U)$ connecting $x_U$ to 
$y_U$ by $[x, y]_U$. Given $\kappa>0$, let 
\[
\calG(x,y,\kappa)= \Big\{ z \in \X \ST 
 \forall U, \,d_U\big(z, [x,y]_U \big)\leq \kappa \Big\}.
\] 

This notion was introduced in \cite{minsky:MR} where they call it the hull.
In a sense, this set if the union of all points in all preferred paths. 

\begin{lemma} 
There is a constant $\kappa_0$ depending only on the topology of $S$ and the 
constant involved in the definition of a preferred path so that, for any preferred 
path $\omega \from [a,b] \to \X$ and any $a\leq t \leq b$, 
$$
\omega(t) \in \calG(x,y,\kappa_0).
$$
\end{lemma}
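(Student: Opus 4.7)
The statement is essentially a subsurface-by-subsurface application of the Morse lemma (stability of quasi-geodesics) in each curve complex. The plan is as follows.

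Fix a subsurface $U$ of $S$. By the second bullet of \defref{Preferred}, the composition $\omega_U = \pi_U \circ \omega$ is an unparametrized quasi-geodesic in $\calC(U)$, with constants depending only on the constants in the definition of a preferred path and on the quasi-Lipschitz constants of the projection $\pi_U$ (which depend only on the topology of $S$). Its endpoints are the projections $x_U = \pi_U(x)$ and $y_U = \pi_U(y)$, so its image is a uniform quasi-geodesic in $\calC(U)$ joining $x_U$ to $y_U$.

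Now $\calC(U)$ is $\delta$-hyperbolic: when $U$ is non-annular this is the Masur--Minsky theorem cited in \secref{Curve-Cpmplex}, and when $U$ is an annulus the space $\calC(U)$ is either a point, a copy of $\ZZ$, or a horoball in $\HH^2$, each of which is hyperbolic. Moreover, since $S$ has only finitely many topological types of essential subsurfaces and all relevant constants depend only on the topology of $S$, the hyperbolicity constants and the quasi-geodesic constants are uniformly bounded. Applying the Morse lemma in each such hyperbolic space, there is a uniform constant $\kappa_0$, depending only on the topology of $S$ and on the quasi-geodesic constants from \defref{Preferred}, so that the image of $\omega_U$ lies within the $\kappa_0$-neighborhood of any geodesic $[x,y]_U$ from $x_U$ to $y_U$ in $\calC(U)$.

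Therefore, for every $t \in [a,b]$ and every subsurface $U$, the point $\pi_U(\omega(t))$ satisfies $d_U(\omega(t), [x,y]_U) \leq \kappa_0$, which by definition means $\omega(t) \in \calG(x,y,\kappa_0)$, completing the proof. The only subtle point is checking that the Morse constant can be chosen independently of the subsurface $U$; this is handled by the finiteness of topological types together with the uniform bound on hyperbolicity constants of curve graphs, so no serious obstacle arises.
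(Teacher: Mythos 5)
Your proof is correct and takes essentially the same approach as the paper: the paper's own (one-sentence) argument is precisely that each projection $\omega_U$ is an unparametrized quasi-geodesic in the Gromov hyperbolic space $\calC(U)$, hence stays in a uniform neighborhood of the geodesic $[x,y]_U$. Your additional remarks on the uniformity of the constants over all subsurfaces are a reasonable elaboration of what the paper leaves implicit.
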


\begin{proof}
Since $\C(U)$ is Gromov hyperbolic and the projection of $\omega$
is an unparametrized quasi-geodesic, it stays in a uniform neighborhood
of the geodesic connecting its end points.
\end{proof}

When $\kappa_0$ is fixed, we drop $\kappa_0$ and denote this set by 
$\calG(x,y)$.  
\begin{lemma} \label{Lem:G}
Let $x,y \in \X$  and $w,z \in \calG(x,y)$. Then 
\begin{itemize}
\item For any subsurface $U$, 
\[
d_U(w,z) \ladd d_U(x,y).
\]
In fact, $[w,z]_U$ is contained in a uniform neighborhood of $[x,y]_U$. 
\item (convexity) if $w,z \in \calG(x,y)$ then $\calG(w,z)$ is contained
in a uniform neighborhood of $\calG(x,y)$. 
\end{itemize}
\end{lemma}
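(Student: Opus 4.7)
The plan is to deduce both bullets from the Gromov hyperbolicity of each curve complex $\calC(U)$ together with the definition of the hull $\calG(x,y) = \calG(x,y,\kappa_0)$; no new technology is required beyond the projection distance formula and standard thin-quadrilateral arguments.

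For the first bullet, fix a subsurface $U$ and let $w_U, z_U$ denote the projections of $w, z$ to $\calC(U)$. By the definition of $\calG(x,y,\kappa_0)$, both $w_U$ and $z_U$ lie within $\kappa_0$ of the geodesic $[x,y]_U$. Let $w_U', z_U'$ be closest-point projections of $w_U, z_U$ to $[x,y]_U$. Since $w_U'$ and $z_U'$ lie on the geodesic $[x,y]_U$, we have $d_U(w_U', z_U') \leq d_U(x,y)$, and the triangle inequality gives
\[
d_U(w,z) \leq 2\kappa_0 + d_U(x,y),
\]
which is exactly $d_U(w,z) \ladd d_U(x,y)$. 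For the second assertion in this bullet, apply the thinness of the geodesic quadrilateral in $\calC(U)$ with vertices $w_U, w_U', z_U', z_U$: two opposite sides have length at most $\kappa_0$ and a third lies on $[x,y]_U$, so a standard hyperbolicity argument (splitting along a diagonal and using thin triangles) shows that the fourth side $[w,z]_U$ lies within a uniform constant of $[x,y]_U$. This is the stability-of-geodesics argument in hyperbolic spaces.

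For the convexity bullet, take any $u \in \calG(w,z)$. For every subsurface $U$,
\[
d_U\bigl(u, [x,y]_U\bigr) \leq d_U\bigl(u, [w,z]_U\bigr) + \mathrm{dist}_U\bigl([w,z]_U, [x,y]_U\bigr) \leq \kappa_0 + C,
\]
where $C$ is the uniform constant from the first bullet. Hence $u \in \calG(x,y,\kappa_0 + C)$. To convert this enlarged-hull containment into an honest uniform neighborhood of $\calG(x,y) = \calG(x,y,\kappa_0)$ inside $\X$, apply the realization part of \thmref{Consistency} to the tuple that coincides with $u_U$ on subsurfaces whose projection already lies within $\kappa_0$ of $[x,y]_U$, and is replaced by the closest point on $[x,y]_U$ otherwise; this tuple is $M$-consistent for a uniform $M$ since the replacement moves each coordinate by at most $C$, and its realization $u'$ lies in $\calG(x,y)$ with $d_V(u,u') = O(1)$ for every $V$. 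The distance formula \eqnref{Distance} then bounds $d_\X(u,u')$ uniformly.

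The only nontrivial obstacle is verifying that this modified tuple remains consistent in the sense of \thmref{Consistency}; this is where one uses that $[x,y]_U$ is the geodesic between projections of two honest points of $\X$, whose tuples of projections are themselves consistent, together with the bounded geodesic image theorem (\thmref{BGIT}) to control what happens when $\partial V$ meets the interior of $U$. Everything else is routine hyperbolic geometry in a single factor $\calC(U)$.
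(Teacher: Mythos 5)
Your proof is correct, and for the substance of both bullets it follows exactly the paper's (very terse) argument: hyperbolicity of $\calC(U)$ plus the thin-quadrilateral/stability argument gives that $[w,z]_U$ lies in a uniform neighborhood of $[x,y]_U$, the triangle inequality through closest-point projections gives $d_U(w,z) \ladd d_U(x,y)$, and then any $u \in \calG(w,z)$ has $d_U(u,[x,y]_U) \leq \kappa_0 + C$ for every $U$. Where you diverge is at the very end: the paper stops at the containment $\calG(w,z) \subset \calG(x,y,\kappa_0+C)$ and implicitly treats an enlarged hull as a uniform neighborhood of $\calG(x,y)$ in $\X$, whereas you explicitly flag and address the conversion by perturbing the tuple of projections back onto the geodesics $[x,y]_U$, checking consistency (with \thmref{BGIT} handling the nested case $V \subsetneq U$, where a bounded move of $z_U$ in $\calC(U)$ need not move $\pi_V(z_U)$ a bounded amount unless some vertex misses $V$, in which case the other term of the consistency minimum is already small), and realizing the result as a genuine point of $\calG(x,y)$ at bounded $d_\X$-distance from $u$ via \thmref{Consistency} and the distance formula. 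That extra step is sketched rather than fully carried out, but the sketch is sound and it supplies a justification the paper omits; it buys a literal reading of ``uniform neighborhood'' in the metric of $\X$ rather than merely a containment between hulls with different constants.
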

\begin{proof}
Since $\C(U)$ is Gromox hyperbolic, if both $z_U$ and $w_U$ are  
close to $[x,y]_U$ so is $[z,w]_U$. Hence the length of 
$[z,w]_U$ is less than $[x,y]_U$ and any point close to 
$[z,w]_U$ is also close to $[x,y]_U$
\end{proof}

The following is the main theorem of this subsection and states
that efficient paths fellow travel preferred paths. 

\begin{theorem}
\label{Thm:Black-Box}
Let $\gamma\from [0,R] \to \X$ be an $\ep$--efficient path connecting 
$x=\gamma(0)$ to $y=\gamma(R)$. Then, the image of $\gamma$ stays 
in an $O(\ep R)$--neighborhood of $\calG(x,y)$. Moreover, it stays in an 
$O(\epsilon R)$--neighborhood of a preferred path 
connecting $x$ to $y$.
\end{theorem}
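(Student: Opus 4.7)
The plan is to push $\gamma$ forward via the Bestvina--Bromberg--Fujiwara embedding $\Psi\from\X\to\bC$ of \thmref{Embedding}, apply the Morse-type argument for efficient paths in each hyperbolic factor, and use consistency and realization to transfer the conclusion back to $\X$. Concretely, since the metric on $\X$ is (quasi-isometrically) the pullback of the $L^1$ metric on $\bC=\prod_j\calC(\bY^j)$, the assumption that $\gamma$ is $\epsilon$--efficient at scale $R$ passes to $\Psi\circ\gamma$ in $\bC$, and by part (3) of \lemref{Easy} to each coordinate $\Psi^j\circ\gamma\from[0,R]\to\calC(\bY^j)$. Because every $\calC(\bY^j)$ is Gromov hyperbolic, \lemref{Close-In-Hyp} produces a geodesic $\ell_j$ in $\calC(\bY^j)$ joining $\Psi^j(x)$ to $\Psi^j(y)$ with $\Psi^j(\gamma(t))$ within $O(\epsilon R)$ of $\ell_j$ for every $t$.

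The main step, and the main obstacle, is converting this distance control in each composite space $\calC(\bY^j)$ into a curve-complex-by-curve-complex bound: for every subsurface $U$ and every $t$, I aim for $d_U(\gamma(t),[x,y]_U)=O(\epsilon R)$. Fix $U\in\bY^j$. Since $\calC(\bY^j)$ is built by gluing the pieces $\calC(W)$ for $W\in\bY^j$ at projection points $\pi_W(\partial V)$ along the quasi-tree $\bP_K(\bY^j)$, the geodesic $\ell_j$ decomposes into subsegments inside individual $\calC(W)$'s. The Bounded Geodesic Image Theorem (\thmref{BGIT}) together with the transversality condition built into $\bY^j$ then shows that any subsegment of $\ell_j$ lying outside $\calC(U)$ projects to a uniformly bounded set in $\calC(U)$, so the image of $\ell_j$ in $\calC(U)$ lies within $O(1)$ of $[x,y]_U$; combined with \lemref{BBF} (which lower-bounds the $\bC$-distance by the sum of subsurface projection distances), the $O(\epsilon R)$ fellow-travelling in $\calC(\bY^j)$ yields $d_U(\gamma(t),[x,y]_U)=O(\epsilon R)$.

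Once this is established for every $U$, the projection tuple of $\gamma(t)$ is consistent (being literally the projection of a point of $\X$) and coordinate-wise $O(\epsilon R)$--close to a consistent tuple whose $U$-entry lies on $[x,y]_U$; the realization half of \thmref{Consistency} produces $z(t)\in\calG(x,y)$ with matching projections, and the distance formula of \secref{Distance} gives $d_\X(\gamma(t),z(t))=O(\epsilon R)$, proving the first assertion. For the \emph{moreover} clause, I would fix any preferred path $\omega\from[0,L]\to\X$ from $x$ to $y$ and match $\gamma(t)$ to the point $\omega(s)$ whose projections to each $\calC(U)$ agree with those of $z(t)$ up to $O(1)$: since $\pi_U\circ\omega$ is an unparametrized quasi-geodesic tracing $[x,y]_U$, such an $s$ exists (monotone-in-arc-length along the coarsest projection), and the distance formula combined with the convexity of hulls (\lemref{G}) yields $d_\X(\gamma(t),\omega(s))=O(\epsilon R)$.
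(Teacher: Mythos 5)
Your opening step coincides with the paper's: push $\gamma$ forward by $\Psi$, use part (3) of \lemref{Easy} and then \lemref{Close-In-Hyp} in each hyperbolic factor $\C(\bY^j)$. After that, however, there are two genuine gaps. The first is in converting per-subsurface control into a bound in $\X$. Even granting your intermediate claim that $d_U\big(\gamma(t),[x,y]_U\big)=O(\ep R)$ for every $U$, and granting that you can realize a consistent tuple $z(t)$ whose $U$--coordinates lie on the $[x,y]_U$, the distance formula expresses $d_\X\big(\gamma(t),z(t)\big)$ as a \emph{sum} over all subsurfaces with projection distance above threshold. Each term being $O(\ep R)$ does not make the sum $O(\ep R)$: the number of terms is not uniformly bounded (for points at distance $D$ it can be of order $D/T$), so this step only yields a bound like $O(\ep R^2)$. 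The paper avoids this entirely by measuring the distance in $\bC$, which has only \emph{finitely many} factors: it produces a single point $\eta\in\calG(x,y)$ (the coarse center of the triangle $x,y,\gamma(t)$, verified to be a consistent tuple), shows in \lemref{Center} that its $j$-th coordinate is the center of $(x_j,y_j,z_j)$ in $\C(\bY^j)$, and then uses that $\Psi$ is a quasi-isometric embedding to convert the $k$ bounds $d_{\C(\bY^j)}(z_j,\eta_j)=O(\ep R)$ into $d_\X\big(\gamma(t),\eta\big)=O(\ep R)$. If you keep your subsurface-by-subsurface route, you must at some point return to the finitely many $\C(\bY^j)$'s to do the summation.

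The second gap is in the ``moreover'' clause. You fix an arbitrary preferred path $\omega$ from $x$ to $y$ and assert that a single parameter $s$ exists with $\pi_U\big(\omega(s)\big)$ close to the projections of $z(t)$ for all $U$ simultaneously. This fails: when $x$ and $y$ differ by a large amount in two disjoint subsurfaces, $\calG(x,y)$ coarsely contains a two-dimensional box (a standard flat) while any preferred path is one-dimensional, so most points of $\calG(x,y)$ are far from any fixed $\omega$ --- this is precisely why \lemref{Hausdorff} carries the hypothesis $d_{\X(W)}(x,y)\le D$ for all $W$. The preferred path has to be \emph{built from $\gamma$ itself}. The paper does this by monotonizing the centers: $\omega_W(t)$ is defined as the farthest-along point of $\{\eta_W(s): s\le t\}$ on $[x,y]_W$; consistency of this new tuple is checked case by case; and the efficiency of $\gamma$ is then invoked, via part (1) of \lemref{Easy} applied in each $\C(\bY^j)$, to show that the backtracking $d_{\C(\bY^j)}\big(\eta_j(t),\omega_j(t)\big)$ is $O(\ep R)$, so the monotonized path still tracks $\gamma$. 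Without some such construction the second assertion does not follow from the first.
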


Before we prove this theorem, we need a few technical lemmas.
Given $x,y,z\in \X$ and a subsurface $W$, let $\eta_W$ be the center
of the triangle $(x_W, y_W, z_W)$ guaranteed by the hyperbolicity
of $\C(W)$. That is, $\eta_W$ is $\delta_W$--close to all three geodesics
$[x,y]_W$, $[y,z]_W$ and $[x,z]_W$, where $\delta_W$ is the hyperbolicity
constant of $\C(W)$. 

\begin{lemma}
The set $\{\eta_W\}$ is $O(1)$--consistent.
\end{lemma}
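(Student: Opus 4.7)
The plan is to verify both parts of the consistency condition for $\{\eta_W\}$ using the consistency of the tuples $\{x_W\},\{y_W\},\{z_W\}$ (each of which arises as projections of a point of $\X$) together with the hyperbolicity of every curve complex and the Bounded Geodesic Image Theorem.

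I would fix subsurfaces $U,V$ with either $U\pitchfork V$ or $V\subsetneq U$, and assume $d_U(\eta_U,\partial V)>M$ for a constant $M$ to be chosen large. The first step is to show that at most one of $x_U,y_U,z_U$ can lie in the $M_1$-neighborhood of $\partial V$ in $\C(U)$. Indeed, if two of them, say $x_U$ and $y_U$, satisfied $d_U(x_U,\partial V),d_U(y_U,\partial V)\leq M_1$, then by hyperbolicity the geodesic $[x,y]_U$ lies in a uniform neighborhood of $\partial V$. Since $\eta_U$ is within $\delta_U$ of that geodesic, this would force $d_U(\eta_U,\partial V)=O(1)$, contradicting the hypothesis for $M$ large enough. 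So at least two of $x,y,z$, call them $w_1,w_2$, satisfy $d_U((w_i)_U,\partial V)>M_1$, and then consistency of $\{(w_i)_W\}$ gives:
\begin{itemize}
\item in the case $U\pitchfork V$: $d_V((w_i)_V,\partial U)\leq M_1$;
\item in the case $V\subsetneq U$: $d_V((w_i)_V,\pi_V((w_i)_U))\leq M_1$.
\end{itemize}

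In the second case I additionally need that $\pi_V((w_i)_U)$ is close to $\pi_V(\eta_U)$. The subgeodesic $[(w_i)_U,\eta_U]$ of one of the tripod sides has both endpoints intersecting $V$. If no intermediate vertex is disjoint from $V$, then \thmref{BGIT} directly gives $d_V(\pi_V((w_i)_U),\pi_V(\eta_U))\leq M_0$. If some intermediate vertices are disjoint from $V$, they form a subinterval of the geodesic of uniformly bounded $\C(U)$-diameter (by hyperbolicity, the set of points on a geodesic within $1$ of a fixed vertex has diameter $O(1)$). Applying \thmref{BGIT} on either side of this bad subinterval, and using the quasi-Lipschitz property of $\pi_{U,V}$ to cross it (the endpoints of the subinterval being close in $\C(U)$), still gives $d_V(\pi_V((w_i)_U),\pi_V(\eta_U))=O(1)$.

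Once both $(w_1)_V$ and $(w_2)_V$ are located within $O(1)$ of the same reference point (either $\partial U$ in case $U\pitchfork V$, or $\pi_V(\eta_U)$ in case $V\subsetneq U$), the side $[(w_1)_V,(w_2)_V]$ of the triangle $\bigl(x_V,y_V,z_V\bigr)$ has length $O(1)$. The center $\eta_V$ is within $\delta_V$ of every side, hence within $O(1)$ of that degenerate side and therefore within $O(1)$ of the reference point. This verifies both conditions in the consistency definition with a uniform constant.

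The main technical obstacle is the $V\subsetneq U$ case when the subgeodesic $[(w_i)_U,\eta_U]$ passes through vertices disjoint from $V$: there $\pi_V$ is undefined, so a bare application of \thmref{BGIT} does not connect $\pi_V((w_i)_U)$ to $\pi_V(\eta_U)$. The resolution combines the hyperbolicity fact that the portion of a geodesic in $\C(U)$ within $1$ of a fixed curve has uniformly bounded diameter, with the quasi-Lipschitz property of $\pi_{U,V}$ restricted to curves that intersect $V$. Every other step only uses thin-triangle reasoning in hyperbolic spaces and a consistent application of consistency to each of $x,y,z$.
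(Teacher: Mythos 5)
Your argument for the transverse case $U\pitchfork V$ is correct, and in fact slightly slicker than the paper's (which derives that case from the nested one at the end): once two of the three vertices project far from $\partial V$ in $\C(U)$, consistency of each individual tuple puts both of their $V$-projections within $M_1$ of the single reference point $\partial U$, and thin-triangle reasoning in $\C(V)$ places $\eta_V$ there too. The nested case $V\subsetneq U$, however, has a genuine gap, and it sits exactly where you flagged the "main technical obstacle." The step that fails is crossing the bad subinterval of $[(w_i)_U,\eta_U]$: knowing that its two flanking good vertices $a,b$ satisfy $d_U(a,b)\leq 4$ and both intersect $V$ does \emph{not} give $d_V(a,b)=O(1)$. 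The map $\pi_{U,V}$ is coarsely Lipschitz only along paths all of whose vertices intersect $V$; two curves at distance $2$ in $\C(U)$, joined through a vertex disjoint from $V$, can have arbitrarily large $V$-projection distance (take $b=\psi^N(a)$ with $\psi$ a partial pseudo-Anosov supported on $V$ and $a$ disjoint from some curve in $U\setminus V$). This is precisely the phenomenon the hypothesis of \thmref{BGIT} is designed to exclude, so it cannot be repaired by applying BGIT on either side of the gap.

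Worse, the intermediate claim you are after --- that $\pi_V((w_i)_U)$ is close to $\pi_V(\eta_U)$ for \emph{every} vertex whose $\C(U)$-projection is far from $\partial V$ --- is false. Consider a tripod in $\C(U)$ in which $\partial V$ lies within $\delta$ of the sides $[x,y]_U$ and $[x,z]_U$ at a point on the $x$-leg well before the center, but far from $[y,z]_U$ and from $\eta_U$, and in which $d_V(x,y)$ is large (all the $V$-activity happening at that point of the leg). Then all three of $x_U,y_U,z_U$ can be far from $\partial V$, yet $\pi_V(x_U)\approx x_V$ is far from $\pi_V(\eta_U)\approx \pi_V(y_U)\approx y_V\approx\eta_V$. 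So selecting $w_1,w_2$ by the condition that their \emph{vertices} are far from $\partial V$ is the wrong dichotomy. The paper selects by \emph{sides}: since the coarse center of a triangle in a hyperbolic space is, up to bounded error, the unique point close to all three sides, $d_U(\eta_U,\partial V)$ large forces $\partial V$ to be far from an entire side, say $[x,y]_U$; then the whole thin triangle spanned by $x_U,y_U,\eta_U$ avoids a large neighborhood of $\partial V$, \thmref{BGIT} applies to each of its sides with no bad subinterval to cross, and the remainder of your argument goes through for that pair.
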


\begin{proof}
Let  $U,V$ arbitrary domains which are not disjoint.  We can assume that  
$U\not\subset V$ and hence $\pi_U(\partial V)$ is defined.  We know that the 
projections of $x,y,z$ to $U$ and $V$ are themselves $M_1$--consistent.

Consider the triangle $\Delta_U$ with vertices $x_U,y_U,z_U$ in $\calC(U)$. 
Note that, if $\partial V$ is uniformly close to all three edges of $\Delta_U$, then
$\partial V$ is uniformly close to $\eta_U$ and we are done. Hence, we can,
without loss of generality, assume that no point in $[x,y]_U$ is near $\partial V$
in $\C(U)$. In fact, since $\eta_U$ is $\delta$--close to $[x,y]_U$, we can assume 
every point in the convex hall of $x_U, y_U$ and $\eta_U$ is more than 
$M_1$ away from $\partial V$. 

This implies that $x_U, y_U$ and $\eta_U$ have defined projections
to $V$ and \thmref{BGIT} implies that their projections are a bounded 
distance from one another. That is, 
\[
d_V(\eta_U, x_U) = O(1), \quad 
d_V(\eta_U, y_U) = O(1),
\quad\text{and}\quad
d_V(x_U, y_U)=O(1). 
\]
On the other hand, because $d_U(x_U,\partial V)$ and $d_U(y_U,\partial V)$ 
are both larger than $M_1$, by \corref{Far} 
\[
d_V(x_V, x_U) \leq M_1 \quad\text{and}\quad
d_V(y_V, y_U) \leq M_1.
\]
And by triangle inequality
\[
d_V(x_V, y_V)=O(1). 
\]
From the definition of $\eta_V$, we have
\[
d_V(\eta_V, [x,y]_V) \leq \delta_V. 
\]
Again, using the triangle inequality, we get
\begin{equation} \label{Eq:eta}
d_V(\eta_U, \eta_V)= O(1). 
\end{equation}
This is the consistency condition when $V \subset U$. 

Thus assume $U\pitchfork V$. Since $\eta_U$ and $\partial U$ are disjoint, $d_V(\eta_U,\partial U)=O(1)$.  
This and \eqnref{eta} imply that
\[
d_V(\partial U, \eta_V)= O(1),
\]
which is the required consistency condition in this case. 
\end{proof}

Since the tuple of centers is consistent, it has a realization $\eta$. 
We call $\eta$ the center of the triangle $x$, $y$ and $z$. 

\begin{lemma} \label{Lem:Center}
For any $x,y,z \in \X$, let $\eta$ be the center of the triangle $\Delta$ with vertices  
$(x,y,z)$. Let $x_j, y_j, z_j$ and $\eta_j$ be projections of 
$x,y,z$ and $\eta$ to $\C(\bY^j)$. Then $\eta_j$ is near
the center of the triangle $(x_j, y_j, z_j)$ in $\C(\bY^j)$.
\end{lemma}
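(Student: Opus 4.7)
My plan is to reduce the lemma to a single key claim: for any $u, v \in \X$ and any $w \in \calG(u, v)$, the projection $\Psi^j(w) = w_j$ lies within uniformly bounded distance of a geodesic from $u_j$ to $v_j$ in $\calC(\bY^j)$. Granting this, I would apply the claim to each of the three pairs $(x, y), (y, z), (x, z)$. Since $\eta$ automatically lies in all three hulls (its coordinate $\eta_W$ is by construction $\delta_W$-close to each of the sides $[x, y]_W$, $[y, z]_W$, $[x, z]_W$ for every subsurface $W$), this would show $\eta_j$ lies close to all three sides of the triangle $(x_j, y_j, z_j)$ in the hyperbolic space $\calC(\bY^j)$, which is the coarse characterization of its center.

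To prove the key claim, I would combine a lower bound on $d_{\calC(\bY^j)}(u_j, v_j)$ coming from \lemref{BBF} restricted to the single factor $\bY^j$, with upper bounds on $d_{\calC(\bY^j)}(u_j, w_j)$ and $d_{\calC(\bY^j)}(w_j, v_j)$. The upper bounds would be obtained by tracing a preferred path from $u$ to $w$ through $\Psi^j$: such a path visits a sequence of curve complexes $\calC(W)$ with $W \in \bY^j$ glued according to the quasi-tree $\bP_K(\bY^j)$, and the total progress across these pieces is controlled by $\sum_{W \in \bY^j} d_W(u, w)$ up to a uniform error. Since $w \in \calG(u, v)$, for every subsurface $W$ we have $w_W$ close to $[u, v]_W$, so $d_W(u, w) + d_W(w, v) \ladd d_W(u, v)$, and any $W$ contributing above threshold to $d_W(u, w)$ or $d_W(w, v)$ must also contribute to $d_W(u, v)$. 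Summing over $W \in \bY^j$ yields
\[ d_{\calC(\bY^j)}(u_j, w_j) + d_{\calC(\bY^j)}(w_j, v_j) \ladd d_{\calC(\bY^j)}(u_j, v_j), \]
and by hyperbolicity of $\calC(\bY^j)$ this forces $w_j$ into a uniform neighborhood of any geodesic from $u_j$ to $v_j$.

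The hard part will be justifying the upper bound in the distance formula for $\calC(\bY^j)$, i.e., the direction opposite to \lemref{BBF}. This requires exploiting the explicit description of geodesics in $\calC(\bY^j)$ through the quasi-tree $\bP_K(\bY^j)$: a geodesic from $u_j \in \calC(W_u)$ to $v_j \in \calC(W_v)$ is built from geodesics within intermediate curve complexes $\calC(W)$ joined by the transitions $\pi_W(\partial W')$, and only those $W \in \bY^j$ with $d_W(u, v)$ above threshold contribute nontrivially; the consistency conditions of \thmref{Consistency} together with the bounded geodesic image theorem (\thmref{BGIT}) are what allow one to ignore the subsurfaces with small projection. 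Once this upper bound is in place, the remainder of the argument is routine hyperbolic geometry applied to the three sides of the triangle.
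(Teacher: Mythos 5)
Your overall reduction is the same as the paper's: show that $\eta_j$ lies within uniformly bounded distance of each of the three geodesics $[x_j,y_j]$, $[y_j,z_j]$, $[x_j,z_j]$ in the hyperbolic space $\calC(\bY^j)$, and conclude by hyperbolicity that it is near the center. The issue is with how you propose to establish the key claim that $w\in\calG(u,v)$ forces $w_j$ uniformly close to $[u_j,v_j]$.

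The chain of estimates you describe only yields a \emph{multiplicative} reverse triangle inequality. Both halves of the distance formula for $\calC(\bY^j)$ carry multiplicative constants (the lower bound in \lemref{BBF} already comes with a factor $\tfrac12$, and any upper bound of $d_{\calC(\bY^j)}$ by $\sum_W d_W$ will have a constant $\geq 1$), so the relation you actually obtain is $d_{\calC(\bY^j)}(u_j,w_j)+d_{\calC(\bY^j)}(w_j,v_j)\prec d_{\calC(\bY^j)}(u_j,v_j)$, not the $\ladd$ you write in your displayed inequality. In a $\delta$--hyperbolic space a multiplicative reverse triangle inequality does not pin $w_j$ to the geodesic: it only bounds the Gromov product $(u_j\,|\,v_j)_{w_j}$ linearly in $d(u_j,v_j)$, so $w_j$ could a priori sit at distance comparable to $d(u_j,v_j)$ from $[u_j,v_j]$. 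This is precisely why the paper does not argue through the distance formula. It instead uses the set-level description of geodesics in $\calC(\bY^j)$ from \cite[Lemma 3.13]{bestvina:GQ}: a geodesic $[u_j,w_j]$ is within bounded Hausdorff distance of the union of the geodesics $[u_j,w_j]_W$ over those $W\in\bY^j$ with large projection distance. Combined with the preliminary observation that $d_W(u,u_j)=O(1)$ for every $W\in\bY^j$ (which your argument also needs when passing between $d_W(u,w)$ and data about $u_j,w_j$, but which you do not flag), each piece $[u_j,w_j]_W$ is Hausdorff-close to $[u,w]_W$, hence --- since $w\in\calG(u,v)$ --- lies in a bounded neighborhood of $[u,v]_W$, hence of the corresponding piece of $[u_j,v_j]$. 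In particular the endpoint $w_j$ of $[u_j,w_j]$ is within bounded distance of $[u_j,v_j]$, with no reverse triangle inequality needed. Replacing the middle of your argument with this comparison of geodesics repairs the proof; your first and last paragraphs can then stand essentially as written.
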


\begin{proof}
First we claim that, for every $W\in \bY^j$, $d_W(x, x_j)=O(1)$. 
Let $x_j$ be a curve $x_{V}$ in a surface $V\in \bY^j$. 
\begin{align*}
\text{$x_V$ is disjoint from $\partial V$}
  &\quad\Longrightarrow\quad d_W(\partial V, x_V) =O(1).\\
\intertext{and}
 \I(x, \partial V)=O(1) 
 &\quad\Longrightarrow\quad d_W(\partial V, x) =O(1).
\end{align*}  
The claim follows from the triangle inequality. 

In \cite[Lemma 3.13]{bestvina:GQ} it is shown that a geodesic
in $\C(\bY^j)$ connecting $x_j$ to $y_j$ is a bounded Hausdorff distance 
from a union of geodesics $[x_j, y_j]_W$, where the union is over
the subsurface where $d_W(x_j, y_j)$ is large. The same holds
for the geodesic connecting $x_j$ to $\eta_j$. 

But, as a consequence of above claim, the geodesic 
$[x_j, y_j]_W$ is a bounded Hausdorff distance from $[x,y]_W$ and
$[x_j, \eta_j]_W$ is a bounded Hausdorff distance from $[x,\eta]_W$.
Also, by assumption, we know that $[x,\eta]_W$ is contained in a
bounded neighborhood of $[x,y]_W$. Therefore, 
$[x_j,\eta_j]_W$ is contained in a bounded neighborhood of $[x_j,y_j]_W$.
In particular, if $d_W(x_j, \eta_j)$ is large, so is $d_W(x_j, y_j)$. 

That is, every subsurface that appears in the geodesic connecting
$x_j$ to $\eta_j$ also appears in the geodesic connecting $x_j$ to
$y_j$ and the portion of the geodesic $[x_j, \eta_j]$
that is in $W$ stays near the geodesic $[x_j, y_j]$.
Thus, $\eta_j$ is itself close to the $[x_j, y_j]$.

The same hold for $x_j, z_j$ and $y_j,z_j$. Since, $\C(\bY^j)$ is Gromov 
hyperbolic, and $\eta_j$ is close to all three geodesics, it is near the center
of the triangle. 
\end{proof}

We now prove the theorem. 

\begin{proof}[Proof of \thmref{Black-Box}]  
Let $\gamma_j= \Psi^j\circ \gamma$ be the projection of the path $\gamma$ to 
$\C(\bY^j)$. By \lemref{Easy}, each $\gamma_j$ is still $\ep$--efficient. 
Since $\C(\bY^j)$ is hyperbolic, by \lemref{Close-In-Hyp}, $\gamma_j(t)$ 
is within $O(\ep R)$ distance of the geodesic $[x_j, y_j]$. 
Let $z=\gamma(t)$ and let $\eta$ be the center of $x,y,z$.
From the construction, we have $\eta \in \calG(x,y)$. We estimate
the distance between $\eta$ and $z$. 

By \lemref{Center}, $\eta_j$ is the center of $x_j, y_j, z_j$ The distance from 
$z_j$ to $[x_j, y_j]$ is, up to an additive error, the distance from $z_j$ to the 
center $\eta_j$. Therefore, 
\[
d_{\C(\bY^j)}(z_j, \eta_j) \ladd 
   d_{\C(\bY^j)}\big(z_j, [x_j, y_j]\big) = O(\ep R). 
\]
It follows, since $\Psi$ is coarsely Lipschitz and the metric
in $\bC$ is the $L^1$--metric, that 
\[
d_\X(z, \eta) \lmul \sum_j d_{\C(\bY^j)}(z_j, \eta_j) = O(\ep R). 
\]
This finishes the proof of the first statement of the Theorem. 

We prove the second statement,  namely that, $\gamma$ stays in an 
$O(\ep R)$--neighborhood of a preferred path connecting x to y.
Let $\eta(t)$ be the center of $x, y$ and $\gamma(t)$. The issue is that 
$\eta(t)$ may not trace a preferred path since the $\ep$--efficient path $\gamma$ 
is allowed to backtrack up to $O(\epsilon R)$. We proceed therefore as follows. 

\begin{figure}[ht]
\setlength{\unitlength}{0.01\linewidth}
\begin{picture}(100, 25) 

\put(5,0){
\begin{tikzpicture} 
[thick, 
    scale=1,
    vertex/.style={circle,draw,fill=black,thick,
                   inner sep=0pt,minimum size= .5 mm}]

  \node[vertex] (x) at (.5,0)  [label=below:$x_W$] {}; 
    \node[vertex] (y) at (10.5,0) [label=below:$y_W$]  {}; 
  
 \node[vertex] (p1) at (8,.5) [label=right:$\gamma_W(s)$]  {}; 
 \node[vertex] (p2) at (6, 1.3) [label=right:$\gamma_W(t)$]  {}; 

 \node[vertex] (e1) at (8,0)  [label=below:$\omega_W(t)$] {}; 
  \node[vertex] (e2) at (6,0)  [label=below:$\eta_W(t)$]  {};

  \pgfsetplottension{0.75}
  \pgfplothandlercurveto
  \pgfplotstreamstart
  \pgfplotstreampoint{\pgfpoint{.5cm}{0cm}}  
  \pgfplotstreampoint{\pgfpoint{1cm}{.5cm}}   
  \pgfplotstreampoint{\pgfpoint{2cm}{.4cm}}
  \pgfplotstreampoint{\pgfpoint{2.5cm}{2.5cm}}
  \pgfplotstreampoint{\pgfpoint{3cm}{.8cm}}
  \pgfplotstreampoint{\pgfpoint{5cm}{.3cm}}
  \pgfplotstreampoint{\pgfpoint{8cm}{.5cm}} 
  \pgfplotstreampoint{\pgfpoint{5cm}{.7cm}} 
  \pgfplotstreampoint{\pgfpoint{6cm}{1.3cm}} 
  \pgfplotstreamend
  \pgfusepath{stroke}
 
 \draw (0,0) -- (11, 0);
 \draw[dashed] (p1) -- (e1);
 \draw[dashed] (p2) -- (e2); 
 
\end{tikzpicture}}
 
\put(80,17){$\C(W)$} 

\end{picture}
\caption{The point $\omega_W(t)$ is defined to be the point
 $\eta_W(s)$ that is farthest along in $[x,y]_W$, for $s \in [0,t]$.}
\label{Fig:omega} 
\end{figure}
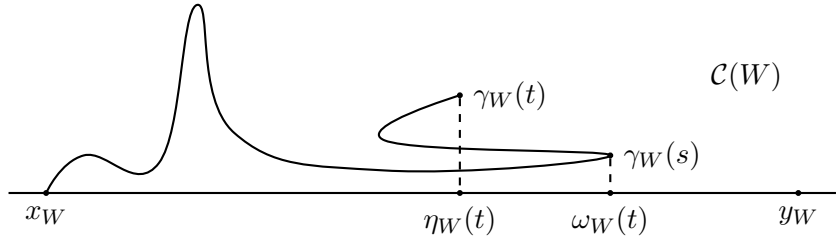

For a time $t$ and a subsurface $W$ let $\eta_W(t)$ be the projection of
$\eta(t)$ to $W$. Consider the geodesic $[x,y]_W$ in $\calC(W)$. 
Let $s \in [0,t]$ be a time where $d_W\big(x_W,\eta_W(s)\big)$ is maximized
(see \figref{omega}) and define 
\[
\omega_W(t)=\eta_W(s).
\]
Note that $\omega_W(t)$ is an unparametrized quasi-geodesic since
it stays close to the geodesic $[x,y]_W$ and does not backtrack. 

We prove $\{\omega_W(t)\}$ is  consistent. Pick two intersecting
surfaces $U$ and $V$. Suppose first that $U\pitchfork V$ and, without loss 
of generality,  $d_V(\partial U,y)=O(1)$. 
Assuming 
\begin{equation} \label{Eq:Large}
d_U\big(\omega_U(t),\partial V\big) \qquad \text{is large},
\end{equation}
we need to show $d_V\big(\omega_V(t),\partial U\big)=O(1)$. 
Since $\omega_U(t)=\eta_U(s)$ and  $\{\eta_W(s)\}$ is consistent, 
\eqnref{Large} implies
\begin{equation*}
d_V\big(\eta(s), \partial U\big)=O(1).
\end{equation*}  
Now $\omega_V(t)$ comes after $\eta_V(s)$ along $[x,y]_V$ 
and hence is close to the geodesic $[\eta(s),  y]_V$.
That is, the projections of $\eta(s)$, $y$, $\partial U$ and $\omega_V(t)$ 
to $V$ are all close to each other. In particular, 
\begin{equation*}
d_V\big(\omega_V(t), \partial U\big)=O(1).
\end{equation*}

Next assume $V\subsetneq U$ and  $d_U(\omega_U(t), \partial V)$ is large, 
or equivalently  $d_U(\eta(s),\partial V)$ is large. 
In $\C(U)$, one of the segments $[x, \omega(t)]_U$ or
$[\omega(t), y]_U$ is far from $\partial V$. Assume first that 
$[x_U,\omega_U(t)]_U$ is far from $\partial V$.  This implies that 
\[
d_V\big(x_U,\omega_U(t)\big)=O(1).
\] 
Now consider $\omega_V(t)=\eta_V(s')$ for some $s'\leq t$. Since $\eta_U(s')$ 
comes before $\omega_U(t)=\eta_U(s)$, we have that $d_U(\eta_U(s'),\partial V)$
is large. By Consistency we have 
\[d_V\big(\eta_U(s'),\omega_V(t)\big)=O(1)\] 
and since $\eta_U(s')$ comes between $x_U$ and $\omega_U(t)$ we also have 
\[
d_V\big(\eta_U(s'), \omega_U(t)\big)=O(1).
\] 
From the triangle inequality we get 
\[
d_V\big(\omega_U(t),\omega_V(t)\big)=O(1),
\] 
which proves Consistency in this case.

The remaining case is when $[\omega(t),y]_U$ is far from $\partial V$.
Consistency implies 
\[
d_V\big(\omega_U(t),\eta_V(s)\big)=O(1),
\]  
where $\omega_U(t)=\eta_U(s)$. We also have  
\[
d_V\big(y_U,\omega_U(t)\big)=O(1)
\quad\text{and so}\quad
d_V\big(y_U,\eta_V(s)\big)=O(1).
\]
Now $\omega_V(t)$ is further along the geodesic $[x,y]_V$ than $\eta_V(s)$ 
and so 
\[
d_V\big(y_U, \omega_V(t)\big)=O(1),
\] 
and applying the triangle inequality once more we have 
\[
d_V\big(\omega_V(t),\omega_U(t)\big)=O(1),
\] 
the desired inequality for Consistency.

Finally, we need to prove 
\[
d_{\X}\big(\gamma(t),\omega(t)\big)=O(\epsilon R).
\]
Let $\omega_j(t)$ be the projection $\omega(t)$ to $\C(\bY^j)$. 
We observe that $\omega_j(t)$ is near the point 
$\eta_j(s)$, $s \in [0,t]$, that is farthest along in $[x_j, y_j]$. 
This is because a geodesic is $\C(\bY^j)$ is a union of
geodesics in subsurfaces $W_1, \ldots, W_k$ appearing in natural
order, and since all these subsurfaces intersect, if a point
$z_j$ is ahead $z_j'$ along $[x_j, y_j]$, then the projection of 
$z_j$ is ahead of the projection $z'_j$ in every subsurface. 
Therefore  there is $s \in [0,t]$ so that
\[
\omega_j(t) = \eta_j(s). 
\]

From part one of \lemref{Easy} we have
\begin{align*}
d_{\C(\bY^j)} \big(x_j, \gamma_j(s)\big) 
  + d_{\C(\bY^j)} \big(\gamma_j(s), \gamma_j(t)\big)
  &+  d_{\C(\bY^j)} \big(\gamma_j(t), y_j \big) \\
 &\leq d_{\C(\bY^j)} (x_j, y_j) +O(\ep R). 
\end{align*}
But  $\C(\bY^j)$ is Gromov hyperbolic, so projection to $[x_j, y_j]$
is distance decreasing. Furthermore 
$d_{\C(\bY^j)}\big(\eta_j,[x_j,y_j]\big)=O(1)$.  Hence
\begin{align*}
d_{\C(\bY^j)} \big(x_j, \eta_j(s)\big) 
  + d_{\C(\bY^j)} \big(\eta_j(s), \eta_j(t)\big)
  &+  d_{\C(\bY^j)} \big(\eta_j(t), y_j\big) \\
 &\leq d_{\C(\bY^j)} (x_j, y_j) + O(\ep R). 
\end{align*}
But we know $\eta_j(t)$ comes before $\eta_j(s)$. 
Therefore, 
\[
d_{\C(\bY^j)} \big(\eta_j(s), \eta_j(t)\big) = O(\ep R),
\]
and hence, 
\[
d_{\C(\bY^j)} \big(\omega_j(t), \eta_j(t)\big) = O(\ep R).
\]
Since this is true for every $j$, we also have
\[
d_\X \big(\omega(t), \eta(t)\big) = O(\ep R).
\]
This finishes the proof.
\end{proof}

\section{Behavior of Preferred paths}
\label{Sec:Preferredprop}
In this section, we analyze preferred paths more carefully to obtain
more control over their behavior. In \propref{SomeBodyClose} we show
(up taking a subsurface) that if a preferred path is making progress in a 
subsurface, it has to stay close to the set of points in $\X$ that contain the 
boundary of that subsurface in their pants decomposition. 
This is analogous to the main results in \cite{rafi:SC} for \Teich geodesics, 
At the end of the section we prove two fellow traveling results 
\propref{fellowtraveling1} and \propref{fellowtraveling2}
for preferred paths. We start by proving a few lemmas.

The following Lemma give a bound on the thickness of 
$\calG(x,y)$ in terms of projection distances $d_{\X(W)}(x,y)$. 

\begin{lemma}
\label{Lem:Hausdorff}
For any $D>0$, if $d_{\X(W)}(x,y)\leq D$ for every subsurface $W$, 
then any point  $z\in \calG(x,y)$ is within  distance $O(D)$ of any preferred 
path $\gamma$ joining $x, y$.  
\end{lemma}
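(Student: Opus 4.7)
The plan is to reduce the claim to the estimate $d_\X(z, x) = O(D)$, where $x$ is the starting endpoint of $\gamma$. Applying the hypothesis with $W = S$ gives $d_\X(x, y) = O(D)$, and since $\gamma$ is a quasi-geodesic from $x$ to $y$ its total length is also $O(D)$, so a bound $d_\X(z, x) = O(D)$ immediately places $z$ in an $O(D)$-neighborhood of $x \in \gamma$.

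To estimate $d_\X(z, x)$ I will feed the per-subsurface bound into the distance formula \eqnref{Distance}. For every subsurface $U$ with $d_U(z, x) \geq T$, the membership $z \in \calG(x, y)$ furnishes a point $p \in [x, y]_U$ within $\kappa_0$ of $z_U$; since $p$ lies on the $\C(U)$-geodesic from $x_U$ to $y_U$, the triangle inequality in $\C(U)$ gives
\[
d_U(z, x) \leq \kappa_0 + d_U(x, y).
\]
Summing this over $\calW_T(z, x)$ splits the resulting upper bound on $d_\X(z, x)$ into a projection part $\sum_U d_U(x, y)$ and an additive part $\kappa_0 \cdot |\calW_T(z, x)|$, and I bound each separately.

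For the projection part, the same inequality shows $d_U(z, x) \geq T$ forces $d_U(x, y) \geq T - \kappa_0$, so $\calW_T(z, x) \subseteq \calW_{T - \kappa_0}(x, y)$ and
\[
\sum_{U \in \calW_T(z, x)} d_U(x, y) \;\leq\; \sum_{U \in \calW_{T - \kappa_0}(x, y)} d_U(x, y) \;\lmul\; d_\X(x, y) = O(D),
\]
using the threshold-invariance \eqnref{Threshold}. For the additive part, the same containment combined with the fact that each $U \in \calW_{T - \kappa_0}(x, y)$ contributes at least $T - \kappa_0$ to $d_\X(x, y)$ yields $|\calW_T(z, x)| = O(D/(T - \kappa_0)) = O(D)$, so the $\kappa_0$-contribution is also $O(D)$.

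The main (minor) obstacle is exactly this book-keeping: one must choose the fixed threshold $T$ comfortably larger than $\kappa_0$ so that threshold-invariance still applies at level $T - \kappa_0$, otherwise the additive $\kappa_0$ terms in each curve complex could in principle accumulate past $O(D)$. Once that is arranged, the two estimates assemble via \eqnref{Distance} into $d_\X(z, x) = O(D)$, which together with $x \in \gamma$ gives the desired bound on $d_\X(z, \gamma)$.
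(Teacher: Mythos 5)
There is a genuine gap, and it sits in your very first step: ``applying the hypothesis with $W=S$ gives $d_\X(x,y)=O(D)$.'' The lemma cannot be read that way, even though the wording ``every subsurface'' makes your reading tempting. In its application (the proof of \thmref{Homework}) the bound $d_{\X(W)}(x,y)\lmul \sqrt[3]{\ep^2}\,R = D$ is established only for \emph{proper} subsurfaces $W\subsetneq S$, while $d_\X(x,y)\gmul \sqrt[3]{\ep}\,R\gg D$; and the paper's own proof never uses the case $W=S$. Under the reading the lemma actually needs, your reduction collapses: $z$ need not be within $O(D)$ of either endpoint --- it can sit over the middle of $[x,y]_S$, at distance comparable to $d_\X(x,y)$ from both $x$ and $y$ --- so the content of the lemma is proximity to an \emph{interior} point of $\gamma$, not to $x$. (With $W=S$ allowed, the statement becomes nearly vacuous, since then every point of $\calG(x,y)$ and of $\gamma$ is within $O(D)$ of $x$ anyway.)

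The subsurface-by-subsurface bookkeeping you do afterwards (the $\kappa_0$ additive losses, the threshold shift, the count of $|\calW_T(z,x)|$) is sound in itself, but it is attached to the wrong reduction. What the paper does instead: since $z\in\calG(x,y)$, the shadow $z_S$ lies within $O(1)$ of $[x,y]_S$, and since $\pi_S\circ\gamma$ is an unparametrized quasi-geodesic from $x_S$ to $y_S$, there is a time $t$ with $d_S\big(z,\gamma(t)\big)=O(1)$. One then joins a pants curve of $z$ to one of $\gamma(t)$ by a $\C(S)$--geodesic $\alpha_1,\dots,\alpha_k$ with $k=O(1)$, bounds each $d_{\X_{\alpha_i}}\big(z,\gamma(t)\big)\lmul d_{\X_{\alpha_i}}(x,y)=O(D)$ using \lemref{G} together with the hypothesis applied to the (proper) components of $S\setminus\alpha_i$, and sums via \lemref{SumDistance}. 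That $\C(S)$--matching step, which locates the right point of $\gamma$ before any distance estimate is attempted, is the idea your argument is missing; the disconnected case is handled separately by summing $d_{\X(W_i)}$ over the components, which are again proper.
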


\begin{proof}
Let $\gamma$ be a preferred path connecting $x$ to $y$ and
let $z \in \calG(x,y)$.
If $S$ is not connected then $S = W_1 \cup, \ldots  \cup W_k$ and, for any 
time $t$,
\begin{align*}
d_\X\big(z, \gamma(t)\big) 
  &= \sum_i d_{\X(W_i)}\big(z,\gamma(t)\big) \\
  &\leq  \sum_i d_{\X(W_i)}(x,y) = O(D). \tag{\lemref{G}}
\end{align*}

Assume $S$ is connected and let $\gamma(t)$ be a point  with  
\[
d_{\C(S)}\big(z, \gamma(t) \big)=O(1).
\]
Such a point exists because $d_S(z, [x,y]_S)=O(1)$ by the definition
$\calG(x,y)$ and $\gamma$ connects $x$ to $y$. 

Let $\alpha_1, \ldots, \alpha_k$ be the geodesic in the $\C(S)$ connecting
a pants curve in $z$ to a pants curve in $\gamma(t)$ with $k=O(1)$. 
By \lemref{G}, for every $\alpha_i$, we have
\[
d_{\X_{\alpha_i}}\big(z, \gamma(t)\big) 
  \lmul d_{\X_{\alpha_i}}(x_1, x_2) \leq D,
\]
And by \lemref{SumDistance}
\begin{equation*}
d_\X\big(z, \gamma(t)\big) 
\lmul \sum_i d_{\X_{\alpha_i}}\big(z, \gamma(t)\big) =O(D). \qedhere
\end{equation*}
\end{proof}

 In preparation for the next lemma we recall a result of Rafi-Schleimer. 
They  give the following definition. 
\begin{definition}
Given a pair of points $x,y\in \X(S)$, thresholds $T_1\geq T_0>0$, and
a subsurface $W$, a collection $\Omega$ of subsurfaces 
$W_i\subsetneq W$ is an antichain in $W$ for $x$ and $y$ if 
\begin{itemize}
\item if $W_i\in\Omega$ then $d_{\calC(W_i)}(x,y)\geq T_0$.
\item if $d_V(x,y)\geq T_1$, then $V\subset W_i$ where $W_i\in\Omega$
\item if $W_i,W_j\in\Omega$ then $W_i$ is not a proper subsurface of $W_j$.  
\end{itemize}
\end{definition}

The size of $\Omega$ is a lower bound for the distance in the curve complex. 

\begin{lemma}[\cite{rafi:CC}] \label{Lem:RS}
There is a constant $A=A(T_0,T_1)$ such that 
\[
|\Omega|\leq A \, d_{W}(x,y).
\]
\end{lemma}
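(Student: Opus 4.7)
The plan is to use the Bounded Geodesic Image Theorem (\thmref{BGIT}) to associate to each element of $\Omega$ a vertex on a $\C(W)$-geodesic from $\pi_W(x)$ to $\pi_W(y)$, and then argue that this association is boundedly many-to-one, where the bound depends only on the topology of $W$. Without loss of generality we assume the lower threshold $T_0$ is larger than the BGIT constant $M_0$.

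More precisely, fix a geodesic $v_0, v_1, \ldots, v_k$ in $\C(W)$ with $v_0 = \pi_W(x)$, $v_k = \pi_W(y)$, and $k = d_W(x,y)$. For each $W_i \in \Omega$, we have $W_i \subsetneq W$ and $d_{\C(W_i)}(x,y) \geq T_0 > M_0$, so \thmref{BGIT} guarantees a vertex $v_{j(i)}$ of this geodesic that is disjoint from $W_i$. Recording the index $j(i)$ defines a map
\[
J \from \Omega \longrightarrow \{0, 1, \ldots, k\}.
\]
If we can show that the fibers of $J$ have size bounded by a constant $A_0$ depending only on the topology of $S$ (and on $T_0, T_1$), then $|\Omega| \leq A_0(k+1)$, and since $d_W(x,y) = k \geq T_0 \geq 1$, this yields $|\Omega| \leq A \, d_W(x,y)$ for a suitable $A = A(T_0, T_1)$, as desired.

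The main obstacle is therefore the bounded-fiber claim: for a fixed vertex $v$ in $\C(W)$, only uniformly many $W_i \in \Omega$ can satisfy $i(v, \partial W_i) = 0$. Such a $W_i$ must either be an essential subsurface of $W \setminus v$, or contain $v$ in its interior, or have $v$ as a component of $\partial W_i$. In each of these three cases, the antichain property (no $W_i$ is a proper subsurface of another) combined with basic surface topology limits the count: pairwise non-nested essential subsurfaces of a fixed surface of finite type form a finite set whose cardinality is bounded in terms of $\xi(W)$. Concretely, for subsurfaces of $W \setminus v$, the antichain condition forces them to be supported on a bounded collection of components and, within each component, their boundary multicurves form a disjoint collection of simple closed curves in a surface of bounded complexity, hence are boundedly many.

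The delicate case is (b), where several $W_i$ all contain $v$ as an interior curve. Here I would argue that any two such must share $v$ as a non-peripheral curve, so non-nestedness forces them to have distinct boundary multicurves that, together with $v$, fit into the fixed surface $W$; once more the topology of $W$ caps the number of such configurations. Combining the three cases yields the uniform fiber bound $A_0$, and plugging this into the display above produces the desired inequality $|\Omega| \leq A \, d_W(x,y)$. The essential use of the upper threshold $T_1$ is only implicit: it ensures $\Omega$ detects every subsurface with very large projection distance, so the set we are bounding is well-defined, but the bound itself follows from the lower threshold $T_0$ together with BGIT and the topological counting argument above.
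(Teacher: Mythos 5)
The paper does not prove this lemma; it is imported wholesale from Rafi--Schleimer \cite{rafi:CC}, so the comparison is with their argument, which is an induction on the complexity of $W$. Your opening move is sound: taking $T_0$ larger than the constant $M_0$ of \thmref{BGIT}, every $W_i\in\Omega$ is disjoint from some vertex of a geodesic $[x,y]_W$, giving the map $J\from\Omega\to\{0,\dots,k\}$. The gap is the fiber bound. The assertion that pairwise non-nested essential subsurfaces of a fixed finite-type surface form a set whose cardinality is bounded by the topology is false: a closed genus-two surface contains infinitely many isotopy classes of essential one-holed tori, no one of which is properly contained in another, and infinitely many of these lie in the complement of a fixed non-separating curve $v$. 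So $J^{-1}(v)$ cannot be bounded using only ``disjoint from $v$'' together with non-nestedness. (A smaller slip: your case in which $v$ is an essential non-peripheral curve in the interior of $W_i$ cannot occur, since such a $v$ is not disjoint from $W_i$ in the sense of \thmref{BGIT} --- indeed $\pi_{W_i}(v)$ is then defined; the only possibilities are that $W_i$ is isotopic into $W\setminus v$, or that $v$ is isotopic to a component of $\partial W_i$ or to the core of an annular $W_i$.)

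The deeper problem is that your argument never uses the second antichain axiom or the upper threshold $T_1$, which you declare to be ``only implicit'' --- but without that axiom the conclusion is false, so no such proof can succeed. For instance, arrange $d_W(x,y)=3$, let $v$ be a vertex of $[x,y]_W$ and $Y$ a component of $W\setminus v$ with $\xi(Y)\geq 1$ and $d_Y(x,y)=N$ enormous; one can then produce $N$ pairwise non-nested proper essential subsurfaces $Y_1,\dots,Y_N\subsetneq Y\subsetneq W$, each with $d_{Y_i}(x,y)\geq T_0$. This family satisfies the two hypotheses your argument actually invokes (large projections and non-nestedness) yet has size $N\gg d_W(x,y)$; it fails to be an antichain only because $Y$ itself satisfies $d_Y(x,y)\geq T_1$ and is contained in no $Y_i$, violating axiom (2). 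A correct proof must therefore exploit axiom (2) in an essential way: the elements of a fiber $J^{-1}(v_j)$ live in the components of $W\setminus v_j$, and one recurses into each such component, using the maximality forced by axiom (2) to keep the total count linear in $d_W(x,y)$ rather than a product of curve-complex distances along a chain of nested subsurfaces. That induction on complexity is the substance of the Rafi--Schleimer proof and is missing from yours.
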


In the next lemma we show that if $\gamma$ is moving in some
$\X(W)$ but it is not close to $\X_{\partial W}$ it is because it is
really moving in a subsurface of $W$. 

\begin{lemma} \label{Lem:Stay-Far}
For all sufficiently large $M,D$ and any subsurface 
$W$ of $S$, if  $\gamma \from [a,b] \to \X$ is a preferred path connecting 
$x$ to $y$ such that  for all $t$,
\[
d_{\X}\big(\gamma(t),\X_{\partial W} \big)\geq M
\quad\text{and}\quad 
d_{\X(W)}(x,y)\geq D,
\] 
then there is a proper subsurface $V \subsetneq W$ such that 
\[
d_{\X(V)}(x,y)\emul d_{\X(W)}(x,y).
\] 
\end{lemma}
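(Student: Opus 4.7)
The plan is to prove the contrapositive: assume $d_V(x,y)\leq\epsilon\,d_{\X(W)}(x,y)$ for every proper $V\subsetneq W$, with $\epsilon$ a small constant to be chosen. I will produce a time $t_0$ at which $\gamma(t_0)$ lies within uniformly bounded $\X$-distance of a concrete point $z'\in\X_{\partial W}$, contradicting the standing hypothesis once $M$ is large.

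First, apply the distance formula (\S\ref{Sec:Distance}) inside $W$ to obtain
\[
d_{\X(W)}(x,y)\emul d_W(x,y)+\sum_{\substack{V\subsetneq W\\ d_V(x,y)\geq T}}d_V(x,y).
\]
\lemref{RS}, combined with the bounded subsurface depth of $S$, shows that the tail is at most $O\bigl(\epsilon\,d_W(x,y)\,d_{\X(W)}(x,y)\bigr)$. Together with $d_{\X(W)}(x,y)\geq D$, this forces $d_W(x,y)$ to be arbitrarily large provided $\epsilon$ is chosen small enough in terms of $D$. Since $\gamma$ is a preferred path, $\gamma_W=\pi_W\circ\gamma$ is an unparametrised quasi-geodesic of coarse length $\emul d_W(x,y)$, so I pick $t_0$ at which $\gamma_W(t_0)$ is near a midpoint $\mu$ of $[x,y]_W$, ensuring $d_W(\mu,\partial W)$ is arbitrarily large as well (any component of $\partial W$ sits at $\calC(W)$-distance $O(1)$ from $x_W$ or from $y_W$).

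Next, build $z'\in\X_{\partial W}$ via \thmref{Consistency}. Define a tuple $\mathsf{z}'$ by setting $\mathsf{z}'_U=\gamma(t_0)_U$ on every $U$ disjoint from $\partial W$, $\mathsf{z}'_U=\pi_U(\partial W)$ on every $U$ that meets $\partial W$ transversely, and at each annulus $A_\alpha$ with $\alpha\in\partial W$ the canonical coordinate encoding membership in $\X_{\partial W}$. Verify consistency of $\mathsf{z}'$ using \thmref{BGIT} and \corref{Far}, realise it as an actual $z'\in\X_{\partial W}$, and bound $d_\X(\gamma(t_0),z')$ coordinate-by-coordinate via the distance formula.

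The main obstacle is showing $d_U(\gamma(t_0),\partial W)=O(1)$ for every subsurface $U$ meeting $\partial W$; this single estimate feeds both the consistency check for $\mathsf{z}'$ and the final distance bound. For $U\subseteq W$ meeting $\partial W$, the standing assumption gives $d_U(x,y)\ll d_W(x,y)$, while the $\calC(W)$-geodesic $[x,y]_W$ passes near $\mu$ which is far from $\partial U$; \thmref{BGIT} and \corref{Far} then control $d_U(\gamma(t_0),\partial W)$. For $U$ straddling $\partial W$ into $S\setminus W$, the argument instead uses $\gamma(t_0)\in\calG(x,y)$ together with an analogous bounded-geodesic-image argument applied in $\calC(U)$ to the geodesic from $x_U$ to $y_U$, whose endpoints are close to $\pi_U(\partial W)$ because they project boundedly from outside $W$. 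Carefully treating these boundary-adjacent subsurfaces, together with the annular coordinates at $\partial W$, is the crux of the argument.
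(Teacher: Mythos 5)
Your overall strategy is the paper's proof reorganized as a contrapositive: the paper splits into the case $d_W(x,y)=O(1)$ (where the antichain bound of \lemref{RS} plus a pigeonhole over the boundedly many antichain elements produces $V$ directly) and the case $d_W(x,y)$ large (shown to be impossible because the point of $\gamma$ projecting to the midpoint of $[x,y]_W$ is then forced to lie at bounded distance from $\X_{\partial W}$). You instead assume no $V$ works, use the antichain bound to force $d_W(x,y)$ large, and then run the same midpoint argument. That reorganization is fine, and you correctly isolate the crux: bounding $d_U(\gamma(t_0),\partial W)$ for every $U$ crossing $\partial W$.

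Your treatment of that crux, however, has a genuine gap. For $U\pitchfork W$ you assert that $x_U$ and $y_U$ are close to $\pi_U(\partial W)$ ``because they project boundedly from outside $W$.'' This is false as stated: $x$ and $y$ are arbitrary points of $\X$, nothing places them near $\X_{\partial W}$, and $d_U(x,\partial W)$ can be arbitrarily large a priori (take $U$ an annulus around a curve crossing $\partial W$; the relative twisting of $x$ is unconstrained). The correct route, which is what the paper does, is a contradiction argument passing through $\calC(W)$: if $d_U(\gamma(t_0),\partial W)$ were large, the consistency condition forces $d_W(\gamma(t_0),\partial U)=O(1)$; since $\gamma_W(t_0)$ sits at the midpoint of $[x,y]_W$ and $d_W(x,y)$ is large, $\partial U$ is then far from both $x_W$ and $y_W$ in $\calC(W)$, and consistency applied to $x$ and to $y$ yields $d_U(x,\partial W)=O(1)$ and $d_U(y,\partial W)=O(1)$, hence $d_U(x,y)=O(1)$ and $\gamma_U(t_0)$, lying near $[x,y]_U$, is close to $\pi_U(\partial W)$ --- a contradiction. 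In other words, the closeness of $x_U,y_U$ to $\pi_U(\partial W)$ is an output of the argument, not an available input. Two smaller issues: your sub-case ``$U\subseteq W$ meeting $\partial W$'' is vacuous (a proper essential subsurface of $W$ is disjoint from $\partial W$, so those coordinates are matched exactly by your tuple $\mathsf{z}'$), and the case $W\subsetneq U$ --- which the paper handles by applying \thmref{BGIT} to whichever of $[x,\gamma(t_0)]_U$ or $[\gamma(t_0),y]_U$ avoids a neighborhood of $\partial W$, contradicting the midpoint choice --- falls through the cracks of your two sub-cases.
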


\begin{proof}
Let $M_0$ be the constant for \thmref{BGIT}. We argue in two cases. 

\subsection*{Case 1}  Assume $d_{W}(x,y)\leq M_2$, for some
uniform constant $M_2=O(M_0)$. By the distance formula 
\[
d_{\X(W)}(x,y) \emul \sum_{U\in \calW_{M_2}(x,y)} d_U(x,y).
\]
Here $\calW_{M_2}(x,y)$ is the collection of subsurfaces $U \subset W$
where $d_U(x,y)\geq M_2$. Since  $d_{W}(x,y)< M_2$, $W$ itself is not in 
the sum.  Consider the anti-chain $\Omega$ in $W$ for $x$ and $y$. Then
\lemref{RS} applied with $T_0=T_1=M_2$ implies that 
$\Omega = \{ V_1, \ldots, V_k\}$ where $k =O(M_0)$.
Each subsurface in $\calW_{M_2}(x,y)$ is a subset of some $V_i$ and
the number of subsurfaces $V_i$ is uniformly bounded. 
Hence, for $V$ equal to some $V_i$, we have
\[
d_{\X(W)}(x,y) \emul \sum_{U\in \calV_{M_2}(x,y)} d_U(x,y),
\]
where $\calV_{M_2}(x,y)$ is the collection of subsurfaces $U \subset V$
where $d_U(x,y)\geq M_2$. That is
$$
d_{\X(V)} (x,y) \emul d_{\X(W)}(x,y). 
$$

\subsection*{Case 2} Assume $d_{W}(x,y)$ is large compared to $M_0$.
We argue this case cannot occur. Choose $z\in \gamma$ 
whose projection to $\C(W)$ is at the midpoint of the quasi-geodesic 
$\pi_W\circ \gamma$. From our assumption, we know that both  
\[
d_W(x,z)
\quad\text{and}\quad  
d_W(z,y)
\] 
are large compared to $M_0$. Let $w \in X_{\partial W}$
be the projection of $z$ to $X_{\partial W}$. Then, 
for all $U$ disjoint from $W$, 
\[
d_U(z, w)=O(1).
\]  
By the assumption of the Lemma $d_{\X}(z,w)$ is large
and by the distance formula, there is some $U$ 
where $d_U(z,w)$ is large. From the previous equation, we know that
$U$ has to intersect $W$. There are two cases. 

Consider first the  possibility that $W\subsetneq U$ and
$d_U(z,\partial W)$ large.  Since the quasi-geodesic 
$[x,y]_U$ in $\C(U)$ passes through $z_U$,
either $[x,z]_U$ or $[z,y]_U$ stays far from $\partial W$.
Then, by \thmref{BGIT}, either  
$d_W(x,z)\leq M_0$ or $d_W(y,z)\leq M_0$ which is a contradiction. 

Consider now the  possibility that $U\pitchfork W$ so $d_U(z,\partial W)$ 
is large. By the first consistency condition we have that $d_W(z,\partial U)$ is small. 
The assumption that $d_W(x,z)$ and $d_W(y,z)$ are large 
(and the triangle inequality) now implies that both $d_W(x,\partial U)$ and 
$d_W(y,\partial U)$ are large. Again, the first consistency condition implies 
$d_U(x,\partial W)$ and $d_U(y,\partial W)$ are small, so $d_U(x,y)$ is small 
by the triangle inequality.  This in turn implies  $d_U(x,z)$ and $d_U(y,z)$ 
are small, and using triangle inequality one more time, we conclude that
$d_U(z,\partial W)$ is small. This is a contradiction.  
\end{proof}

\begin{proposition} \label{Prop:SomeBodyClose}
There exists constant $D_0$ such that   given a subsurface $W\subset S$ 
and a preferred path $\gamma \from [a,b] \to \X$ connecting $x$ to $y$ 
where 
\[
D= d_{\X(W)} (x,y)\geq D_0,
\]
there is a subsurface $V \subset W$ and a sub-interval $[c,d] \subset [a,b]$ 
so that
\begin{itemize}  
\item  $d_{\X(V)}\big(\gamma(c), \gamma(d)\big) \emul D$
 \item for $t \in [c,d]$,  $d_\X \big( \gamma(t), \X_{\partial V}\big) = O(1)$.
\end{itemize}
\end{proposition}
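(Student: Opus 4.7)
The plan is to prove the proposition by strong induction on the complexity $\xi(W)$. The base case is when $W$ is a full component of $S$, so $\partial W = \emptyset$ and $\X_{\partial W} = \X$; the proximity condition $d_\X(\gamma(t), \X_{\partial V}) = O(1)$ is vacuous and we take $V = W$, $[c,d] = [a,b]$.

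For the inductive step, assume the proposition holds for every proper subsurface of $W$. Fix a uniform constant $M$ larger than the threshold of \lemref{Stay-Far}, and decompose $[a,b]$ into maximal alternating sub-intervals according to whether $\gamma(t)$ lies within distance $M$ of $\X_{\partial W}$ (``close'' sub-intervals $J_j$) or strictly outside (``far'' sub-intervals $K_i = [a_i, b_i]$). On each far $K_i$, \lemref{Stay-Far} supplies a proper subsurface $V'_i \subsetneq W$ with
\[
d_{\X(V'_i)}\bigl(\gamma(a_i), \gamma(b_i)\bigr) \emul d_{\X(W)}\bigl(\gamma(a_i), \gamma(b_i)\bigr).
\]
The coarse triangle inequality for $d_{\X(W)}$ (which, via the distance formula, can be iterated without the exponential blowup of an ordinary quasi-triangle inequality) gives
\[
D \lmul \sum_i d_{\X(W)}\bigl(\gamma(a_i), \gamma(b_i)\bigr) + \sum_j d_{\X(W)}(\gamma|_{J_j}).
\]
If some close sub-interval $J_j$ contributes $d_{\X(W)}$-progress $\gmul D$, take $V = W$ and $[c,d] = J_j$: by construction $d_\X(\gamma(t), \X_{\partial V}) \leq M = O(1)$ on $[c,d]$, while $d_{\X(V)}(\gamma(c), \gamma(d)) \gmul D$ by choice. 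If instead some far sub-interval $K_i$ carries $d_{\X(W)}$-progress $\gmul D$, invoke the inductive hypothesis on the preferred sub-path $\gamma|_{K_i}$ and the strictly smaller-complexity subsurface $V'_i$ to extract $V \subseteq V'_i \subsetneq W$ and $[c,d] \subseteq K_i$ with the required properties.

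The main obstacle is the residual scenario in which $D$ is dispersed across many sub-intervals, so that no single sub-interval yields progress $\gmul D$. To handle this, we aggregate using the antichain bound \lemref{RS}: the proper subsurfaces $V'_i \subsetneq W$ arising from \lemref{Stay-Far} on the various far sub-intervals can be organized into a uniformly bounded antichain in $W$, so by pigeonhole some single $V' \subsetneq W$ receives aggregate $d_{\X(V')}$-progress $\gmul D$ across a cluster of far sub-intervals $K_{i_1}, \ldots, K_{i_\ell}$ all contained in a single sub-interval $[c^*, d^*] \subseteq [a,b]$. Applying the inductive hypothesis to the preferred sub-path $\gamma|_{[c^*, d^*]}$ and $V'$ then yields the desired $V \subseteq V' \subsetneq W$ and $[c, d] \subseteq [c^*, d^*]$, closing the induction.
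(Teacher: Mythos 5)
Your overall strategy (induction on the complexity of $W$, applying \lemref{Stay-Far} to excursions away from $\X_{\partial W}$, taking $V=W$ on the portions near $\X_{\partial W}$) matches the paper's, but your decomposition into \emph{all} maximal close/far sub-intervals creates the dispersion problem that your ``residual scenario'' then fails to resolve. The aggregation step is the gap: \lemref{RS} bounds an antichain for a fixed pair $x,y$ by $A\,d_W(x,y)$, so the antichain is uniformly bounded only when $d_W(x,y)$ is --- which is exactly how it is used inside \lemref{Stay-Far}, where one has $d_W(x,y)\le M_2$. Over the whole path $d_W(x,y)$ can be of order $D$, the subsurfaces $V_i'$ produced on different far sub-intervals need not form an antichain at all (one can properly contain another), and their number is not uniformly bounded; hence the pigeonhole does not produce a single $V'$ carrying progress $\gmul D$. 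The same difficulty is fatal in the true base case of your induction, which you have misidentified: a full component of $S$ has \emph{maximal} complexity among its subsurfaces, whereas the base case is a $W$ of minimal complexity, where no proper essential subsurfaces exist and there is nothing to aggregate over. (Your iterated coarse triangle inequality for $d_{\X(W)}$ over an unbounded number of pieces is also asserted rather than proved; the below-threshold terms in the distance formula accumulate linearly in the number of pieces.)

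The paper sidesteps all of this by cutting at only two times: the first time $z_1$ and the last time $z_2$ that $\gamma$ comes within $M$ of $\X_{\partial W}$. This yields exactly three pieces, so one of them carries $d_{\X(W)}$-progress $\emul D$ with no pigeonhole over subsurfaces needed; the two end pieces stay entirely far from $\X_{\partial W}$, so \lemref{Stay-Far} plus the inductive hypothesis applies to them, and the middle piece is handled with $V=W$. If you wish to keep your finer decomposition you need a genuinely new argument for the dispersed case; the cleaner fix is to adopt the three-piece cut.
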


\begin{proof} We use induction on complexity of subsurfaces.  If $\gamma$ does not come within $M$ 
of  $\X_{\partial W}$ then \lemref{Stay-Far} applies. 
Let $W' \subsetneq W$ be a subsurface such that 
$$
d_{\X(W')}(x,y)\emul D.
$$ 
Since $W'$ has lower complexity than $W$, \propref{SomeBodyClose} 
applies by induction. That is, there is a subsurface $V \subset W' \subset W$ 
with the desired properties. 

Thus assume $\gamma$ does in fact come within $M$ of $\X_{\partial W}$, 
and let $z_1$ and $z_2$ be points in $\gamma$ marking the first and the last 
times  $\gamma$ is within $M$ of $\X_{\partial W}$.
We have, either 
\[
d_{\X(W)}(x, z_1) \emul D, \quad 
d_{\X(W)}(z_1, z_2) \emul D, \quad\text{or}\quad 
d_{\X(W)}(z_2, y) \emul D.
\]
If $d_{\X(S_1)}(z_1, z_2) \emul D$, then we are done after taking 
$V=W$. In the other two cases, (say $d_{\X_{\partial W}}(x, z_1) \emul D$)
the path connecting $x$ and $z_1$ does not come close to 
$\X_{\partial W}$ but  travels large distance in $\X_W$. Hence, we again can
apply \lemref{Stay-Far} and induction to finish the proof.
\end{proof}

\subsection{Steady Progress}
Consider a preferred path that stays near the space $\X_{\partial W}$
for some subsurface $W$. Sometimes, it is desirable that $\gamma$
makes steady progress in the curve complex of $W$. We make
this notion precise:

\begin{definition} \label{Def:Steady}
Suppose  $\gamma \from [a,b]\to  \X_{\partial W}$ is a preferred path connecting 
$x=\gamma(a)$ and $y=\gamma(b)$. Let $L=d_{\X(W)}(x,y)$  and
let $a=t_0<t_1<t_2<t_3<t_4<t_5=b$ such that $y_i=\gamma(t_i)$ satisfy 
\[
d_{\X(W)}(y_i,y_{i+1})=L/5.
\]
For a constant $C_0$, we say $\gamma$ makes \emph{$C_0$--steady progress} 
in $W$ if, for $i=0, \ldots, 4$
$$
d_W(y_i, y_{i+1}) \geq C_0.
$$ 
\end{definition}

Note that if $\gamma$ makes $C_0$ steady progress for some $C_0$ then it makes $C_0'$ steady progress for $C_0'<C_0$.

The next lemma says that we can  find subsurfaces where there is
steady progress. The constant $D_0$ appears in 
\propref{SomeBodyClose}, \lemref{Robust} and \propref{fellowtraveling1}. 
This means that we choose $D_0$ large enough that all three statements
hold. In the lemma below, $D_0$ seems to depend on $C_0$ which is
an open variable. But, in fact, the value of $C_0$ is fixed in 
\propref{fellowtraveling1} and should be taught of as a fixed constant. 

\begin{lemma}
\label{Lem:Robust}
For every $C_0$, there is $D_0$ such that for any surface $W$, if  $\gamma$ 
is a preferred path joining $x,y \in \calN_{O(1)}(\X_{\partial W})$ and 
\[
D:=d_{\X(W)}(x,y)\geq D_0
\] 
then, for some interval $[c,d]\subset [a,b]$ and some subsurface $V \subset W$, 
we have
\begin{itemize}
\item  $d_{\X(V)}\big(\gamma(c) , \gamma(d) \big)\gmul D$.
\item $\gamma|_{[c,d]}$ makes $C_0$-steady progress in $V$. 
\end{itemize}
\end{lemma}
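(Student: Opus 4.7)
My plan is to induct on the complexity $\xi(W)$, with annuli as the base case. When $W$ is an annulus, $\X(W)=\C(W)$ and any subdivision of $\gamma$ into five equal $\X(W)$-pieces automatically yields curve-complex jumps $\emul D/5$, which exceeds $C_0$ as soon as $D_0\gmul C_0$. For the inductive step I invoke \propref{SomeBodyClose} on $\gamma$ with $W$ to extract a subsurface $V_0\subseteq W$ and a sub-interval $[c_0,d_0]$ on which $\gamma$ stays within $O(1)$ of $\X_{\partial V_0}$ and $d_{\X(V_0)}(\gamma(c_0),\gamma(d_0))\emul D$. Using coarse continuity, I subdivide $[c_0,d_0]$ into five equal $\X(V_0)$-pieces with breakpoints $y_0,\dots,y_5$; if all five jumps satisfy $d_{V_0}(y_i,y_{i+1})\geq C_0$, I am done with $V=V_0$.

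Otherwise fix $i$ with $d_{V_0}(y_i,y_{i+1})<C_0$. On this piece the progress $d_{\X(V_0)}(y_i,y_{i+1})\emul D/5$ must be accumulated in proper subsurfaces of $V_0$, once the distance-formula threshold $T$ exceeds $C_0$ so that $V_0$ itself drops out of the sum. Form the anti-chain $\Omega\subset V_0$ for $y_i,y_{i+1}$ with $T_0=T_1=T$; by \lemref{RS}, $|\Omega|\lmul d_{V_0}(y_i,y_{i+1})<AC_0=O(1)$. Each subsurface appearing in the distance-formula sum for $d_{\X(V_0)}(y_i,y_{i+1})$ lies in some anti-chain element, and each is counted at most $|\Omega|$ times, so pigeonhole selects an element $V_1\in\Omega$ with $d_{\X(V_1)}(y_i,y_{i+1})\gmul D/(5|\Omega|)\gmul D$. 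I then reapply \propref{SomeBodyClose} to $\gamma|_{[t_i,t_{i+1}]}$ with $V_1$, obtaining $V_1'\subseteq V_1\subsetneq V_0$ and $[c_1,d_1]$ with $\gamma|_{[c_1,d_1]}$ in $\calN_{O(1)}(\X_{\partial V_1'})$ and $d_{\X(V_1')}(\gamma(c_1),\gamma(d_1))\gmul D$. Since $\xi(V_1')<\xi(W)$ and the endpoints are correctly positioned, the inductive hypothesis applies and supplies the desired $V\subseteq V_1'$ and $[c,d]\subset[c_1,d_1]$.

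The principal obstacle is controlling the accumulation of multiplicative constants through the recursion. Each step introduces a finite factor depending on $C_0$, the anti-chain constant $A$, and the distance-formula constants; since the recursion depth is bounded by $\xi(W)$, itself bounded uniformly in terms of the topology of $S$, the cumulative loss is a fixed constant, and $D_0$ can be chosen large enough at the outset to absorb it while still ensuring the base case closes. The other delicate point is the pigeonhole step, where overlapping but incomparable anti-chain elements can double-count common sub-subsurfaces; the overcount is bounded by $|\Omega|=O(1)$, which is harmless for the $\gmul$ estimate but must be tracked explicitly when transferring the distance-formula sum from $V_0$ to the chosen $V_1$.
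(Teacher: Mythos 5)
Your proof is correct and follows essentially the same route as the paper's: induction on complexity, a five-fold subdivision, and, when some piece fails to make $C_0$ progress in the curve complex, the anti-chain/pigeonhole argument of Case 1 of \lemref{Stay-Far} (via \lemref{RS}) to transfer a definite fraction of the $\X$-distance into a proper subsurface before recursing. Your one refinement is the interleaving of \propref{SomeBodyClose} to re-establish proximity to $\X_{\partial V}$ at each level of the recursion; the paper's terse proof leaves this implicit, but it is exactly what makes the inductive hypothesis (whose endpoints must lie in $\calN_{O(1)}(\X_{\partial V})$) applicable, so this is a welcome addition rather than a different approach.
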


\begin{proof}
We first note that by definition, a quasi-geodesic in  a lowest complexity 
subsurface makes steady progress;   otherwise since it is a quasi-geodesic it 
would have to make progress in the curve complex of {\em some} proper 
subsurface, but there are none. The proof is now by induction on complexity. If  
$\gamma$ does not make steady progress in $W$, then for some $i$,
$d_W(y_i, y_{i+1})=O(1)$.  If so, we use the anti-chains (\lemref{RS}) 
and argue as in Case 1 in the proof of \lemref{Stay-Far} to conclude
that there exists a subsurface $V \subsetneq W$ where
\[
d_{\X(V) }(y_i,y_{i+1})\gmul D.
\] 
Here the implied constant only depends on $C_0$ and not on $D$.  
Now, an induction on the complexity of $W$ implies the lemma
(replace $\gamma$ with the preferred path connecting $y_i$ to $y_{i+1}$
and $W$ with $V$).  
\end{proof}

We now prove a pair of fellow traveling lemmas. 
The first states that, if the end points of two preferred paths are close
compared to their lengths, and the first one makes steady progress in $W$ then
the middle part of the second one also stays near $\X_{\partial W}$. 

\begin{proposition}
\label{Prop:fellowtraveling1}
There are constants $c_0, c_1, C_0,D_0$ with the following property. Suppose 
$\gamma$ is a preferred path joining $x, y \in \X_{\partial W}$ that makes 
$C_0$-steady progress in $W$, let $z$ be the midpoint of $\gamma$ and assume 
$D:=d_{\X(W)}(x,y)\geq D_0$.  Suppose  $\gamma'$ is a preferred 
path joining $x'$ and $y'$ with
\[
d_{\X}(x,x')\leq c_0D,
\quad\text{and}\quad
d_{\X}(y,y')\leq c_0D.
\] 
Then, there is a subsegment of $\gamma'$ with length comparable
to $D$ that stays in a bounded neighborhood of $\X_{\partial W}$.
In fact, for $z'$ on $\gamma'$, if
\[
d_\X(z',z) \leq c_1 D
\qquad\text{then}\qquad
d_\X(z', \X_{\partial W}) = O(1). 
\]
\end{proposition}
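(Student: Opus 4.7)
The plan is to apply \propref{SomeBodyClose} to $\gamma'$ in the subsurface $W$, use the $C_0$-steady progress of $\gamma$ in $W$ to force the extracted subsurface to be $W$ itself, and then locate $z'$ in the resulting sub-interval via $\X(W)$-parametrization.

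\textbf{Setup and invocation of \propref{SomeBodyClose}.} Since the projection $\phi_W$ is quasi-Lipschitz, the hypotheses $d_\X(x,x'), d_\X(y,y') \leq c_0 D$ imply $d_{\X(W)}(x,x'), d_{\X(W)}(y,y') \leq K c_0 D + C$. For $c_0$ small (depending only on $K,C$) and $D_0$ large, the triangle inequality yields $D' := d_{\X(W)}(x', y') \emul D \geq D_0$. Applying \propref{SomeBodyClose} to $\gamma'$ produces a subsurface $V \subset W$ and a sub-interval $[c,d]$ with $\gamma'([c,d]) \subset \calN_{O(1)}(\X_{\partial V})$ and $d_{\X(V)}(\gamma'(c), \gamma'(d)) \emul D$.

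\textbf{Forcing $V = W$.} The crux is to rule out $V \subsetneq W$. If $V$ were proper, then by the quasi-Lipschitz projection $\phi_V$ and the triangle inequality one would have $d_{\X(V)}(x,y) \gmul D$; that is, a large portion of $\gamma$'s $\X(W)$-progress would be absorbed by a proper subsurface. Combining \thmref{BGIT} with the anti-chain bound \lemref{RS}, as in Case~1 of the proof of \lemref{Stay-Far}, one would then obtain $d_W(y_i, y_{i+1}) = O(1)$ for some fifth, contradicting $C_0$-steady progress as long as $C_0$ is chosen larger than the implicit uniform constant. Hence $V = W$ and $\gamma'([c,d])$ lies in $\calN_{O(1)}(\X_{\partial W})$, giving the claimed subsegment of length comparable to $D$.

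\textbf{Locating $z'$ and concluding.} Parametrizing $\gamma'$ by $\X(W)$-distance from $x'$, the sub-interval $[c,d]$ spans all but $O((c_0+c_1)D)$ of the parameter on each side. Since $z$ is the midpoint of $\gamma$ with $d_{\X(W)}(x,z) \emul D/2$, combining $d_{\X(W)}(x,x') = O(c_0 D)$ and $d_{\X(W)}(z,z') = O(c_1 D)$ gives $d_{\X(W)}(x',z') \emul d_{\X(W)}(z',y') \emul D/2$, placing $z'$ inside $[c,d]$ for $c_0, c_1$ small enough, so that $d_\X(z', \X_{\partial W}) = O(1)$ follows from the conclusion of \propref{SomeBodyClose}. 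The main obstacle is the previous step: carefully transferring the absorption structure from $\gamma'$ back to $\gamma$ and extracting the contradiction with steady progress requires choosing $C_0$ in tandem with the constants coming from \thmref{BGIT}, \lemref{RS}, and the quasi-Lipschitz projections.
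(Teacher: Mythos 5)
Your route is genuinely different from the paper's, but its central step fails. The claim that the subsurface $V$ produced by \propref{SomeBodyClose} must equal $W$ rests on the implication ``if a proper $V\subsetneq W$ satisfies $d_{\X(V)}(x,y)\gmul D$, then $d_W(y_i,y_{i+1})=O(1)$ for some fifth.'' This is the \emph{converse} of the pigeonhole argument in Case~1 of \lemref{Stay-Far} and in \lemref{Robust}: there, small progress in $\C(W)$ forces concentration of $d_{\X(W)}$ in a proper subsurface, via \lemref{RS}. The converse is not true. Steady progress only requires $d_W(y_i,y_{i+1})\geq C_0$ for a fixed constant $C_0$, independent of $D$, and this is perfectly compatible with a proper subsurface $V$ absorbing a definite fraction $\kappa D$ of $d_{\X(W)}(x,y)$: the path can move in $\C(W)$ toward $\pi_W(\partial V)$, make an excursion of $\X(W)$-length $\kappa D$ into $V$ during which $\pi_W\circ\gamma$ is essentially stationary, and then continue. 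As long as $\kappa<1/5$ the excursion can sit inside a single fifth $[y_i,y_{i+1}]$ while the path still moves $\geq C_0$ in $\C(W)$ during that fifth. The constant $\kappa$ you inherit from \propref{SomeBodyClose}, \lemref{RS} and the quasi-Lipschitz transfers is uniform but not at your disposal, so you cannot force it above the threshold (roughly $2/5$) at which the excursion would have to swallow a whole fifth and contradict steady progress. Without $V=W$, the sub-interval you obtain stays near $\X_{\partial V}$, which does not imply proximity to $\X_{\partial W}$, and the argument collapses.

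There is a second gap in the final step: \propref{SomeBodyClose} only guarantees $d_{\X(V)}\big(\gamma'(c),\gamma'(d)\big)\emul D$ up to a multiplicative constant and says nothing about where $[c,d]$ sits inside $\gamma'$; it could be a short initial segment. So the assertion that $[c,d]$ ``spans all but $O((c_0+c_1)D)$ of the parameter on each side,'' and hence contains $z'$, is unjustified. The paper's proof avoids both problems by working directly in $\C(W)$: steady progress is used to manufacture checkpoints $\beta_1,\dots,\beta_4$ on $[x,y]_W$ with the property that any points of $\X$ projecting $\delta$-close to $\beta_2$ and $\beta_3$ are $\gmul D$ apart in $\X$; hyperbolicity of $\C(W)$ then forces $[x',y']_W$ to pass close to $[\beta_2,\beta_3]_W$, which both produces the long subsegment of $\gamma'$ near $\X_{\partial W}$ (via the Case~2 argument of \lemref{Stay-Far}) and pins down the location of $z'$ within it.
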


\begin{proof}
Let $x=y_0, \ldots, y_5=y$ be as in \defref{Steady}. Consider the projection 
$\pi_W(\gamma)$  to $\C(W)$. We have $d_W(x,y_1)\gmul C_0$. 
Even though the distance in $\X$ between $y_1$ and $x$ is $D/5$, there may 
be points much closer to $x$ in $\X$ whose projection to  $\C(W)$ is still near 
$\pi_W(y_1)$. However, this can not happen if  we travel a few steps towards 
$y$ along $[x,y]_W$. We make this precise. 

\subsection*{Claim} There is $\beta_1 \in [x,y]_W$ so that
\begin{itemize}
\item $d_W(y_1, \beta_1) \leq 2\delta$. 
\item for any $z_1 \in \X$, 
\[
d_W(z_1, \beta_1) \leq \delta 
\quad \Longrightarrow\quad
d_\X(x, z_1) \gmul D.
\]
\end{itemize}
We remind the reader that $\delta$ is the hyperbolicity constant for $\calC(W)$.
\begin{proof}[Proof of Claim]
We know $d_{\X(W)}(x, y_1) \gadd D/5$ and that the distance in $\X(W)$ is
the sum of subsurface projections to subsurfaces in $\calW_T(x,y_1)$. 
The boundary of any such subsurface is near a curve in $[x,y_1]_W$. 

Let $\beta_1$ be the curve along $[x,y]_W$ that is $2\delta$ away 
(towards $y$) from the projection of $y_1$ to $W$.  For $T$ larger than $M_0$, 
by \thmref{BGIT} the projection of $[\beta_1,y]_W$ to any subsurface 
$U \in \calW_T(x,y_1)$ has uniformly bounded diameter $M_0$ 
(every curve in $[\beta_1,y]_W$ intersects $U$). 
In fact, for $z_1 \in \X$, where $d_W(z_1, \beta_1)\leq \delta$, every
curve in the geodesic $[z_1,y]_W$ also intersects $U$. Hence, 
$$
d_U(x, z_1) \gadd d_U(x, y_1).
$$
Therefore, $d_{\X(W)}(x, z_1) \gmul d_{\X(W)}(x, y_1)= D/5$. 
\end{proof}

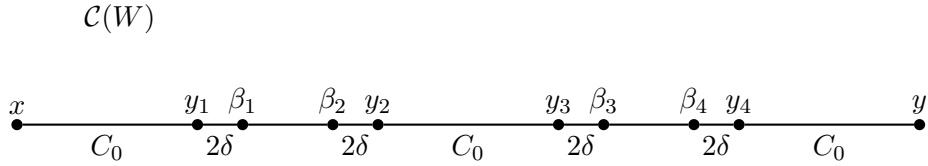
\begin{figure}[ht]
\setlength{\unitlength}{0.01\linewidth}
\begin{picture}(100, 18) 

\put(0,0){
\begin{tikzpicture}
   [thick, 
    scale=0.12,
    vertex/.style={circle,draw,fill=black,thick,
                   inner sep=0pt,minimum size=1.2mm}]

   \node[vertex] (y0) at (0,0)   {};
   \node[vertex] (y1) at (20,0)   {};
   \node[vertex] (y2) at (40,0)   {};
   \node[vertex] (y3) at (60,0)   {};
   \node[vertex] (y4) at (80,0)   {};
   \node[vertex] (y5) at (100,0)   {};
   \node [above] at (y0) {$x$};
   \node [above] at (y1) {$y_1$};
   \node [above] at (y2) {$y_2$};
   \node [above] at (y3) {$y_3$};
   \node [above] at (y4) {$y_4$};   
   \node [above] at (y5) {$y$};

   \node[vertex] (b1) at (25,0)   {};
   \node[vertex] (b2) at (35,0)   {};
   \node[vertex] (b3) at (65,0)   {};
   \node[vertex] (b4) at (75,0)   {};
   \node [above] at (b1) {$\beta_1$};
   \node [above] at (b2) {$\beta_2$};
   \node [above] at (b3) {$\beta_3$};
   \node [above] at (b4) {$\beta_4$};   

   \draw (y1) -- (b1) node[midway,below] {$2\delta$};
   \draw (b1) -- (b2);
   \draw (b2) -- (y2) node[midway,below] {$2\delta$}; 
   \draw (y0) -- (y1) node[midway,below] {$C_0$};
   \draw (y3) -- (b3) node[midway,below] {$2\delta$}; 
   \draw (y2) -- (y3) node[midway,below] {$C_0$};   
   \draw (b3) -- (b4);   
   \draw (b4) -- (y4) node[midway,below] {$2\delta$};         
   \draw (y4) -- (y5) node[midway,below] {$C_0$};        
\end{tikzpicture}}
 
\put(10,15){$\C(W)$} 
 
\end{picture}
\caption{The projections of $y_0, \ldots, y_5$ in $\C(W)$ are at least $C_0$ apart. 
For $i=1, \ldots 4$, the $\beta_i$  in $[x,y]_W$ is
$2\delta$ away from the projection of $y_i$ in the indicated direction.}
\label{Fig:beta_i} 
\end{figure}

Similarly we find
\begin{itemize}
\item a curve $\beta_4$ near the projection of $y_4$ so that, for any $z_4$
whose projection to $W$ is $\delta$ close to $\beta_4$, we have
$$
d_{\X(W)}(z_4, y) \gmul D.
$$
\item curves $\beta_2$ and $\beta_3$, near the shadows of $y_2$ and 
$y_3$ respectively, so that, for any $z_2,z_3 \in X$, 
\begin{equation} \label{Eq:Delta}
d_W(z_2, \beta_2) \leq \delta , \quad d_W(z_3, \beta_3) \leq \delta
\quad \Longrightarrow\quad
d_\X(z_2, z_3) \gmul D.
\end{equation}
\end{itemize}

If $c_0$ is small enough, any point in the path $[x,x'\,]_W$ has a distance 
of at least $C_0$ (up to an additive error) from any point in 
$[\beta_2, \beta_3]_W$. The same holds for curves in $[y,y'\,]_W$. 
Hence, if $C_0$ is much larger than the hyperbolicity
constant $\delta$, it follows from the hyperbolicity of the curve complex that
the $\delta$--neighborhood of the path $[x', y']_W$ has to contain 
$[\beta_2,\beta_3]_W$. In particular, 
$$
d_W(x', y') \gadd C_0.
$$
This means the path $[x', y']$ passes near $X_{\partial W}$. Let
$z_1'$ and $z_4'$ be the first and the last time the path $[x', y']$
is near $\X_{\partial W}$. A $\delta$--neighborhood of the 
geodesic $[z_1', z_4']_W$ must also contain $[\beta_2, \beta_3]_W$.

This  means that  there are points $z_2'$ and $z_3'$ along $[z_1',z_4']$ 
whose projection to $\C(W)$ is $\delta$--close to $\beta_2$ and $\beta_3$ 
respectively. Thus, by \eqnref{Delta}, 
$$
d_\X(z_2', z_3') \gmul D 
\quad \Longrightarrow\quad
d_\X(z_1', z_4') \gmul D.     
$$
This is the desired subsegment of $\gamma'$. To see the last assertion 
of the theorem, note that if $d_\X(z',z) \leq c_1 D$ for $c_1$ small
enough, then $z'$ is indeed in the segment $[z_2', z_3']$.
This finishes the proof. 
\end{proof}

\begin{proposition}
\label{Prop:fellowtraveling2}
Assume $S$ is connected. There  is a constant $C_1$ with the following property.  
For two pairs of points $x,y$ and $x',y'$, suppose   
\[
d_S(x,x')=O(\delta) 
\quad\text{and}\quad
d_S (y,y')=O(\delta).
\]
Suppose $z'\in \calG(x',y')$ is such that 
\[
d_S (z', x) \geq C_1\
\quad\text{and}\quad
d_S (z' ,y)\geq C_1.
\]
Then 
\[
z' \in \calG(x,y).
\]
\end{proposition}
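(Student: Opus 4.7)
The plan is to show that $d_U(z',[x,y]_U) = O(1)$ for every subsurface $U \subseteq S$; enlarging the hull constant by a uniform amount then gives $z' \in \calG(x,y)$. As a preliminary observation, since $z' \in \calG(x',y')$ satisfies $d_S(z',x'), d_S(z',y') \geq C_1 - O(\delta)$, the geodesic $[x',y']_S$ has length $\gmul C_1$, so $d_S(x,y) \gmul C_1$ as well. The case $U = S$ follows directly by hyperbolicity of $\calC(S)$: the middle section of $[x',y']_S$ lies in an $O(\delta)$-neighborhood of $[x,y]_S$ (since the quadrilateral $x,y,y',x'$ has two short sides of length $O(\delta)$), and $\pi_S(z')$ falls in this middle section.

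For $U \subsetneq S$ I would split into two cases using a threshold $T$ chosen much larger than the constants $M_0$ (from \thmref{BGIT}), $M_1$ (from \corref{Far}), and $\delta$. In \emph{Case A}, $d_S(\partial U, x) > T$ and $d_S(\partial U, y) > T$: applying \thmref{BGIT} to the $O(\delta)$-length geodesics $[x,x']_S$ and $[y,y']_S$ (on which no curve can come within $1$ of $\partial U$) yields $d_U(x,x'), d_U(y,y') \leq M_0$, and by hyperbolicity of $\calC(U)$ the geodesic $[x,y]_U$ lies within Hausdorff distance $O(1)$ of $[x',y']_U$; the hypothesis $d_U(z',[x',y']_U) \leq \kappa_0$ then transfers. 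In \emph{Case B}, $d_S(\partial U, x) \leq T$ (the case $d_S(\partial U, y) \leq T$ is symmetric; both cannot hold since $d_S(x,y) \gmul C_1 \gg 2T$). Choosing $C_1$ large enough that $d_S(z',\partial U) \geq C_1 - T > M_1$ and $d_S(y,\partial U) \gmul C_1 > M_1$, the Far Lemma \corref{Far} gives both $d_U(z',z'_S) = O(1)$ and $d_U(y,y_S) = O(1)$. Using $z' \in \calG(x',y')$ together with the fellow-traveling between $[x',y']_S$ and $[x,y]_S$, the curve $z'_S$ lies within $O(\delta)$ of a point $p \in [x,y]_S$ with $d_S(p,x) \gmul C_1$, so the subarc of $[x,y]_S$ from $p$ to $y$ stays at $\calC(S)$-distance $\gmul C_1$ from $\partial U$; by slim triangles so does the geodesic $[z'_S, y_S]_S$. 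A final application of \thmref{BGIT} then gives $d_U(z'_S, y_S) \leq M_0$, and chaining the three estimates yields $d_U(z',y) = O(1)$. Since $\pi_U(y) \in [x,y]_U$, Case B is complete.

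The main obstacle is the bookkeeping in Case B, where one must pick $C_1$ large enough that every invocation of \corref{Far} and \thmref{BGIT} in the chain simultaneously satisfies its hypothesis, and verify that the two cases genuinely exhaust all proper subsurfaces of $S$.
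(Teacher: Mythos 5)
Your proposal is correct and follows essentially the same route as the paper's proof: reduce to showing $d_U\big(z',[x,y]_U\big)=O(1)$ for every subsurface $U$, handle $U=S$ by hyperbolicity of $\calC(S)$, and handle proper $U$ by combining \thmref{BGIT} with the consistency estimates. Your case split on proper subsurfaces is keyed to the position of $\partial U$ relative to $x_S$ and $y_S$ rather than to $z'_S$ as in the paper, but the two arguments you deploy (transferring $[x',y']_U$ to $[x,y]_U$ when $\partial U$ is far from both endpoints, and bounding $d_U(z',y)$ via BGIT otherwise) are the same two arguments the paper uses, so this is only a cosmetic reorganization.
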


\begin{proof}
We need to show that, for any subsurface $U$, 
\[
d_U \big(z', [x,y]_U\big)=O(1).
\]
If $U=S$ then we know from the hyperbolicity of $\C(S)$ that either $z_S'$ is 
close to $[x',x]_S$,  $[x,y]_S$ or $[y,y']_S$. Note that different
paths connecting two points in $\C(S)$ are a bounded distance apart in 
the $d_{\C(S)}$. Since,  $x_S'$ is much closer to $x_S$ than $z_S'$,  
then $z_S'$ is far from the path $[x,x']_S$ and similarly from $[y,y']_S$. 
Thus it has to be near $[x,y]_S$. 

Now assume $U \not = S$. If  $\partial U$ is not close to $z_S'$ in $\C(S)$ 
then, without loss of generality, we can assume $\partial U$ is far from 
$[z', y]_S$ (otherwise $\partial U$  would be  far from  $[x, z']_S$). 
Therefore, by \thmref{BGIT}  $d_U(z', y)=O(1)$. That is, $z_U$ is close to $y_U$
and hence close to $[x,y]_U$. 

If $\partial U$ is near $z_S'$ in $\C(S)$, then it is far from 
$[x.x']_S$ and $[y,y']_S$. Therefore 
\[
d_U(x,x') = O(1) 
\quad\text{and}\quad
d_U(y,y') = O(1).
\]
Hence, $[x,y]_U$ is near $[x', y']_U$. But we know that $z_U'$ is
near $[x', y']_U$ and therefore it is near $[x,y]_U$. This finishes the proof. 
\end{proof}

\section{Local structure of efficient maps} \label{Sec:Local}

We want to prove the following result. We assume $S$ is connected. 
\begin{theorem}
\label{Thm:Homework}
For all $R_0,\epsilon_0$ there is $R_1\geq R_0$ and $\ep_1<\ep_0$ 
so that if $B$ is a box in $\R^n$ with $|B|= R \geq R_1$ and 
$f\from B\to \X$ is an $\ep R$--efficient map with $\ep \leq \epsilon_1$, 
then there is a sub-box $B'\subset B$ with $R'=|B'| \geq R_0$
such that one of the following holds:
\begin{itemize}
\item $R'\gmul \sqrt[3]{\ep^2}\, R$ and, for some curve $\alpha$,  
$f(B')$ lies  within an $O(\sqrt[3]{\ep}\, R')$--neighborhood of $\X_\alpha$.
\item  $R'\gmul \sqrt[3]{\ep}\,R$ and there exist $x,y$ so that $f(B')$ lies 
within an $O(\sqrt[3]{\ep}\,R')$--neighborhood of a preferred path  in $\X$ 
joining $x$ to $y$.  
\end{itemize}  
\end{theorem}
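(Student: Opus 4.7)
Plan.

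My approach is to use the Bestvina--Bromberg--Fujiwara embedding $\Psi\from \X\to \bC = \prod_{j=1}^{k}\calC(\bY^j)$ (\thmref{Embedding}), working in the pullback metric under which $\Psi$ is an isometric embedding. This converts the $\ep$--efficient map $f$ into an $\ep$--efficient map into a finite product of hyperbolic spaces, where \propref{Efficient-to-Hyperbolic} applies on each factor.

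First, since each coordinate $f_j = \Psi^j \circ f \from B \to \calC(\bY^j)$ is $\ep$--efficient by \lemref{Easy}(3), I iteratively apply \propref{Efficient-to-Hyperbolic} for $j=1,\ldots,k$ to extract a sub-box $B_0 \subset B$ with $|B_0|\emul R$ such that each $f_j(B_0)$ lies in an $O(\ep R)$--neighborhood of a geodesic $\ell_j \subset \calC(\bY^j)$. I then pick corners $p,q$ of $B_0$ so as to maximize $d_\X(f(p),f(q))$, set $x = f(p)$, $y = f(q)$, and let $\omega$ be a preferred path from $x$ to $y$; each projection $\Psi^j\omega$ is a quasi-geodesic in $\calC(\bY^j)$ which fellow-travels the subsegment of $\ell_j$ between $\Psi^j x$ and $\Psi^j y$.

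I then dichotomize on $D := \max_{W\subsetneq S} d_{\X(W)}(x,y)$. If $D\lmul \sqrt[3]{\ep}\, R$, then \lemref{Hausdorff} shows that $\calG(x,y)$ lies in an $O(\sqrt[3]{\ep}\, R)$--neighborhood of $\omega$; combining this with the coordinatewise control above (after possibly shrinking $B_0$ once more so that each $\Psi^j f(B_0)$ is confined to the $[\Psi^j x,\Psi^j y]$--segment of $\ell_j$) yields $f(B_0)$ inside an $O(\sqrt[3]{\ep}\, R)$--neighborhood of $\omega$, giving conclusion~(2) with $R' = |B_0|$. Otherwise, fix a proper subsurface $W\subsetneq S$ with $d_{\X(W)}(x,y)\gmul D$; \propref{SomeBodyClose} together with \lemref{Robust} produces a proper subsurface $V\subsetneq S$, a sub-interval $[c,d]$ of $\omega$ that stays $O(1)$--close to $\X_{\partial V}$ and makes $C_0$--steady progress in $V$, with $d_{\X(V)}\bigl(\omega(c),\omega(d)\bigr)\gmul D \gmul \sqrt[3]{\ep}\, R$. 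For each line $\gamma'\subset B_0$ parallel to the diagonal from $p$ to $q$ with endpoints within $c_0 D$ of $p$ and $q$, \propref{fellowtraveling1} guarantees that the preferred path tracing $f\circ\gamma'$ stays $O(1)$--close to $\X_{\partial V}$ on its middle portion corresponding to $[c,d]$. Taking $B'\subset B_0$ to be a sub-box of size $R'\emul D$ centered on the preimage of $\omega|_{[c,d]}$ along the diagonal, with transverse extent $\lmul c_0 D$, every $z\in B'$ lies on such a $\gamma'$ at its $[c,d]$--portion, so $f(z)$ is within $O(\ep R)$ of $\X_{\partial V}$. Since $R'\gmul \sqrt[3]{\ep}\, R$, the error $O(\ep R) = O(\sqrt[3]{\ep^2}\, R')$, and setting $\alpha$ to any component of $\partial V$ yields conclusion~(1).

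The main technical obstacle will be the passage from a sub-interval of a single line to an $n$--dimensional sub-box in the second alternative: I must ensure the fellow-traveling estimate of \propref{fellowtraveling1} propagates uniformly across the whole transverse slab, not merely along a one-parameter family of lines, so that $B'$ genuinely has all $n$ dimensions comparable to $D$. Matching the cube-root exponents $R'\gmul \sqrt[3]{\ep^2}\,R$ and error $O(\sqrt[3]{\ep}\,R')$ against the given efficiency scale $\ep R$ requires careful control of the constants $c_0,C_0,D_0$ entering \propref{SomeBodyClose}, \propref{fellowtraveling1} and \lemref{Robust}.
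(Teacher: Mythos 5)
Your Case~1 (the $\X_\alpha$ alternative) follows the paper closely: you invoke \propref{SomeBodyClose}, \lemref{Robust} and \propref{fellowtraveling1} to locate a subsurface $V$ where the preferred path $\omega$ tracks $\X_{\partial V}$ with steady progress, and then push a tube of nearby segments through the middle. That part is essentially the paper's argument. The genuine divergence — and the gap — is in your Case~2.

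In Case~2 you apply \propref{Efficient-to-Hyperbolic} factor by factor to obtain $B_0$ with $\Psi^j f(B_0)$ inside an $O(\ep R)$--neighborhood of a geodesic $\ell_j\subset\calC(\bY^j)$, and then assert that ``after possibly shrinking $B_0$'' each $\Psi^j f(B_0)$ is confined to the segment $[\Psi^j x,\Psi^j y]$. This shrinking step is where the argument breaks. After \propref{Efficient-to-Hyperbolic}, each $\Psi^j f$ behaves approximately like an affine map $z\mapsto\langle v_j,z\rangle$ onto $\ell_j$, but the directions $v_j$ for different $j$ are uncoordinated. No single pair of corners $p,q$ need simultaneously realize (even approximately) the extremes of $\langle v_j,\cdot\rangle$ on $B_0$ for all $j$; cancellation can make $|\langle v_j,p-q\rangle|$ much smaller than the full span of $\Psi^j f(B_0)$ on $\ell_j$. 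In that situation the sub-box you want does not exist with $R'\emul R$, and in fact the conclusion you claim --- that all of $B_0$ maps into an $O(\sqrt[3]\ep\,R)$--neighborhood of a single preferred path --- is too strong: the paper only obtains a box of size $R'\emul d_\X(x,y)$, which can be as small as $\sqrt[3]\ep\,R$. A further, more minor omission: even if the confinement were arranged, passing from coordinatewise proximity to $[\Psi^j x,\Psi^j y]$ to proximity to $\calG(x,y)$ in $\X$ requires the center-of-triangle construction and \lemref{Center} as in the proof of \thmref{Black-Box}; you gesture at ``combining with the coordinatewise control'' without supplying that bridge.

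For comparison, the paper handles the coordination problem head-on with \propref{fellowtraveling2}: any segment from a $c R'$--neighborhood of $p$ to a $c R'$--neighborhood of $q$ maps near a preferred path $\gamma'$, and for $z'\in\calG(x',y')$ far from $x'_S,y'_S$ in $\calC(S)$ one gets $z'\in\calG(x,y)$; \lemref{Hausdorff} then collapses $\calG(x,y)$ to $\omega$. This argument needs steady progress in $\calC(S)$ --- i.e., $d_S(x,y)\gmul C_1$ --- and that is exactly what the paper's stronger Case~1 threshold $\sqrt[3]{\ep^2}R$ supplies: combined with $d_\X(x,y)\geq\sqrt[3]\ep\,R$ and \lemref{SumDistance} it forces $d_S(x,y)\gmul 1/\sqrt[3]\ep$. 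Your threshold of $\sqrt[3]\ep\,R$ gives only $d_S(x,y)\gmul 1$, which would not feed the fellow-traveling machinery even if you chose to switch to that route. Finally, note the paper disposes of the degenerate case $\diam f(B)<\sqrt[3]\ep\,R$ up front (Case~1 with $B'=B$ and any $\alpha\in\pi_S\circ f(B)$); your maximization of $d_\X(f(p),f(q))$ over corners of $B_0$ silently assumes this does not happen, and should be separated out.
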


\begin{proof}
We can assume that the diameter $f(R)$ is at least $\sqrt[3]{\epsilon} R$; 
otherwise the first case holds by taking $B'=B$, $R'=R$ and any curve
$\alpha \in \pi_S\circ f(B)$. Thus choose $x=f(p),y=f(q)$ where $p,q\in B$ 
such that 
\[
d_\X(x,y) \geq \sqrt[3]{\ep}\,R.
\]
Identify the geodesic segment $[p, q]$ with an interval
in $\R$. By \thmref{Black-Box}, $f\big([p,q]\big)$ stays in the
$O(\ep R)$--neighborhood of a preferred path $\gamma$ joining $x,y$. 

Now suppose, for some proper subsurface $W\subsetneq S$, that 
\[
d_{\X(W)}(x,y)\geq \sqrt[3]{\ep^2}\, R.
\]
We now claim the first conclusion of the Theorem holds. That is,  
there is a subbox $B'$ of size 
\[
R' \emul d_{\X(W)}(x,y) \gmul \sqrt[3]{\epsilon^2} R
\] 
and a curve $\alpha$ such that $f(B')$ lies  within an 
$O(\sqrt[3]{\epsilon} R')$--neighborhood of $\X_\alpha$.
We prove the claim. 

It follows from \propref{SomeBodyClose} and \lemref{Robust}
that we can find a subinterval $[d_1,d_2]\subset  [a,b]$, a subsurface $V$ and 
$C_0$ such that 
\begin{enumerate}
\item $d_{\X(V)}\big(\gamma(d_1) , \gamma(d_2)\big)\gmul   R'$,
\item $\gamma\big([d_1,d_2]\big)$  stays in a $O(1)$ neighborhood of 
$\X_{\partial V}$, and
\item the path $\gamma|_{[d_1,d_2]}$ makes $C_0$-steady progress in $V$. 
\end{enumerate}
Let $x_i=\gamma(d_i)$.  Let $s_i\in [p,q]$ so that  
\[
d_{\X}(f(s_i),x_i)=O(\epsilon R).
\]
For small  $c$ consider any two points 
$t_1,t_2$ at distance $cR'$ from $s_1,s_2$ respectively and set $y_i=f(t_i)$.  
 For $c$ small enough we have 
\begin{align*}
d_\X(x_i,y_i) &\leq d_\X \big(x_i, f(s_i)\big) + d_\X\big(f(s_i),y_i \big)\\
& \leq O(\ep R) + K cR' \leq c_0R'
\end{align*}
where $c_0$ is the constant given by \propref{fellowtraveling1}.  
Let  $d\in [d_1,d_2]$ be such that $x=\gamma(d)$ 
is the  midpoint of a $\gamma|_{[d_1,d_2]}$. Let $p$ be any point such that  
\[
d_{\X}(f(p),x)=O(\epsilon R).
\]
By  \propref{fellowtraveling1}, 
there is $c_1$ so that all $y\in\calG(y_1,y_2)$ that satisfy  
\[
d_\X(y,x) \leq c_1R'
\]
also satisfy
\[
d_\X(y,\X_{\partial V})=O(1).
\]
For $c$  small, a box of size $R'$ centered at $p$ is mapped 
under $f$ within distance $O(\epsilon R)=O( \sqrt[3]{\epsilon} \, R')$ of such 
$y$ and so  the image of the box lies within 
$O(\sqrt[3]{\epsilon}\, R')$ of $\X_\alpha$, for $\alpha= \partial V$. 
This proves the claim. 

We continue the proof of the Theorem. 
By the first part of the argument we can assume  that  for all $W\subsetneq S$ that 
\begin{equation}
\label{Eq:bounded3} 
d_{\X(W)}(x,y)\lmul  \sqrt[3]{\epsilon^2} \, R
\end{equation}
This and \lemref{SumDistance} imply that $\gamma$ makes $C_0'$-- steady 
progress in the entire surface $S$, for some $C_0' \gmul 1/ \sqrt[3]{\ep}$. 
For $\epsilon$ small enough, this implies that it makes $C_0$ steady progress 
where $C_0$  is the fixed constant of Proposition~\ref{Prop:fellowtraveling1}.
Let $C_1$ be the constant of \propref{fellowtraveling2}.
For a small but fixed $c>0$, take a $cR'$--neighborhood of $p$ and a 
$cR'$--neighborhood of $q$ where now 
\[
R'=d_{\X}(x,y)\gmul \sqrt[3]{\epsilon} \, R.
\]  

Let $p',q'$ be any points in these neighborhoods and let $x'=f(p')$ and $y'=f(q')$. 
By \thmref{Black-Box} we can find a preferred path  $\gamma'$ 
joining $x',y'$  within $O(\epsilon R)$ of $f\big([p',q']\big)$.  Since the map $f$ is 
quasi-Lipschitz it follows, for $c$ sufficiently small, that 
\[
d_{\X}(x',y')\gmul R'.
\]

Choose any  point $\hat p$ in the middle third of $[p',q']$. There is
$z'\in \calG(x',y')$ whose projection to $\calC(S)$ is at least $C_1$--far 
from $x_S'$ and $y_S'$ and
\[
d_{\X}(f(\hat p),z')=O(\epsilon R).
\]
By \propref{fellowtraveling2}, we know that $z'\in\calG(x,y)$  and so 
$f(\hat p)$ is within $O(\epsilon R)$ of $\calG(x,y)$ and by \lemref{Hausdorff} 
any point of $\calG(x,y)$ is within  distance  $O(\sqrt[3]{\epsilon^2}\, R)$ of 
$\gamma$.  

We have shown that any point in the middle third of 
any segment starting near $p$ and ending near $q$ is mapped to 
a point that is in a $O(\ep R)$--neighborhood of $\gamma$. 
But such a path covers a box of size $R'$.  Thus, there is box of size 
$R'$ which maps within $O(\sqrt[3]{\epsilon^2} \, R)=O(\sqrt[3]{\epsilon} \,R')$ 
of a preferred path.  We are done.
\end{proof}  

\section{Proof of main theorems} \label{Sec:Proof}

We are ready to prove \thmref{Intro-Standard-Flat} and \thmref{Rank}.
We first prove a version of \thmref{Intro-Standard-Flat} for efficient maps.
Then, we use coarse differentiation to finish the proof. 

\begin{definition} \label{Def:StandardFlat}
Let $\balpha$ be a (possibly empty) curve system. For every connected 
component $W$ of $S \setminus \balpha$ (including annuli if
$\X$ is not the $\calP(S)$), let $\omega_W \from I_W \to \X(W)$ be a 
preferred path. Consider the box $B = \prod_W I_W \subset \R^n$,
where $n$ is the number of components of $S \setminus \balpha$. Then
\[
F\from B \to \X_\balpha= \prod_W \X(W)
\qquad\text{where}\qquad
F= \prod_W \omega_W,
\]
is a quasi-isometric embedding because each $\gamma_W$ is a quasi-geodesic
and the product space is equipped with the $L^1$--metric. 
We call this map a \emph{standard flat}
in $\X$.
\end{definition}

\begin{theorem} \label{Thm:Standard-Flat} 
Let $S$ be a surface with complexity $\xi=\xi(S)$. For given
$\ep_0$ and $R_0$, let 
\[
\ep_\xi = \ep_0^{(6^\xi)} 
\qquad\text{and}\qquad
R_\xi = \frac{R_0}{\ep_\xi}.
\]
Assume $f \from B \to \X$ is an $\ep_\xi$--efficient map where $B$ is a 
box of size $R_\xi$ in $\R^n$. Then, there is a box $B' \subset B$ of size 
$R'\geq R_0$ so that the image $f(B')$ lies inside the 
$O(\ep_0 R')$--neighborhood of a standard flat in $\X$. 
\end{theorem}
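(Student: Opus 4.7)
The plan is to prove \thmref{Standard-Flat} by induction on the complexity $\xi = \xi(S)$. The base case $\xi \leq 0$ (so $\X(S)$ is essentially a point and any map lies in an arbitrary standard flat with empty $\balpha$) is immediate. For the inductive step I would distinguish two cases depending on whether $S$ is connected.

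When $S$ is connected, the main tool is \thmref{Homework}. Applied to the given $\epsilon_\xi$-efficient map $f\from B\to \X$ on a box of size $R_\xi = R_0/\epsilon_\xi$, it produces a sub-box $B_1$ of size $R_1 \gmul \epsilon_\xi^{2/3} R_\xi$ where either $f(B_1)$ lies within $O(\epsilon_\xi^{1/3} R_1)$ of a preferred path in $\X$ (a one-dimensional standard flat with empty $\balpha$, and we are done), or $f(B_1)$ lies within $O(\epsilon_\xi^{1/3} R_1)$ of the product region $\X_\alpha$ for some curve $\alpha$. In the latter case, part (4) of \lemref{Easy}, applied with $\epsilon = \epsilon_\xi^{1/6}$ so that $\epsilon^2 = \epsilon_\xi^{1/3}$ matches both the efficiency available and the size of the neighborhood of $\X_\alpha$, shows that the composition of $f|_{B_1}$ with closest-point projection to $\X_\alpha$ is $\epsilon_\xi^{1/6}$-efficient on $B_1$. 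The crucial identity $\epsilon_\xi^{1/6} = \epsilon_0^{6^{\xi-1}}$ lets us feed this projected map to the inductive hypothesis applied to $\X(S\setminus\alpha)$, which has strictly smaller complexity. Choosing the inner parameter $\tilde R_0$ so that the resulting sub-box $B'\subset B_1$ has size $R' \gmul \epsilon_\xi^{1/6} R_1$ makes the residual $O(\epsilon_\xi^{1/3} R_1)$ deviation from $\X_\alpha$ equal $O(\epsilon_\xi^{1/6} R') \leq O(\epsilon_0 R')$, which combines cleanly with the $O(\epsilon_0 R')$ standard-flat approximation in $\X(S\setminus\alpha)$ provided by induction, giving the conclusion since a standard flat in $\X_\alpha \subset \X$ is a standard flat in $\X$.

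When $S$ is disconnected, $S = S_1\sqcup\cdots\sqcup S_c$ with $\xi = \xi_1+\cdots+\xi_c$, I would apply the inductive hypothesis sequentially to the component projections $f_i\from B\to \X(S_i)$, restricting at each step to the sub-box produced by the previous step. Each $f_i$ is $\epsilon_\xi$-efficient by part (3) of \lemref{Easy}, which is much stronger than the $\epsilon_0^{6^{\xi_i}}$-efficiency required at complexity $\xi_i < \xi$. The inequality $6^{\xi_1+\cdots+\xi_c} \geq 6^{\xi_1}+\cdots+6^{\xi_c}$ (which follows from $(6^{\xi_i}-1)(6^{\xi_j}-1)\geq 1$ whenever at least two of the $\xi_i$ are positive) shows that the box-size budget $R_\xi$ accommodates all $c$ successive inductive calls with final sub-box of size at least $R_0$, provided the inner parameters $\tilde\epsilon_0^{(i)}$ are chosen sharper than $\epsilon_0$ at early steps to keep absolute errors within $O(\epsilon_0 R')$. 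The resulting standard flats in each $\X(S_i)$ then combine, under the product decomposition $\X(S) = \prod_i \X(S_i)$, into a standard flat in $\X(S)$.

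The main obstacle is the careful bookkeeping of the parameters $\tilde\epsilon_0$ and $\tilde R_0$ across sequential inductive calls: each call must simultaneously (i) satisfy the efficiency hypothesis, (ii) satisfy the box-size hypothesis, and (iii) yield an absolute error $\tilde\epsilon_0 R_{\text{int}}$ (at an intermediate box scale) that is at most $O(\epsilon_0 R')$ relative to the \emph{final} sub-box size. The super-exponential choice $\epsilon_\xi = \epsilon_0^{6^\xi}$ is calibrated so that the factor of $6 = 3 \cdot 2$ consumed per complexity reduction exactly covers the cube-root loss from \thmref{Homework} and the square-root loss from part (4) of \lemref{Easy}, leaving just enough slack to support the recursive structure of the disconnected case.
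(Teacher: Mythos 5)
Your inductive step for connected $S$ follows the paper's argument closely, including the calibration $\ep_\xi^{1/3}=\ep_{\xi-1}^2$ and the use of part (4) of \lemref{Easy} to restore efficiency after projecting to $\X_\alpha$. However, there are two genuine gaps, both traceable to the complete omission of \propref{Efficient-to-Hyperbolic}. First, the base case is wrong as stated: $\xi(S)=0$ does not mean $\X(S)$ is essentially a point. By the definition of complexity, it means $S$ is a disjoint union of once-punctured tori and four-punctured spheres, so $\X(S)$ is a product of unbounded Gromov hyperbolic spaces (Farey graphs, horoball-truncated hyperbolic planes, or quasi-trees). The base case is where the real content at low complexity lives: one must apply \propref{Efficient-to-Hyperbolic} factor by factor, passing each time to a sub-box of comparable size, to localize the image near a product of quasi-geodesics. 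It is not ``immediate.''

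Second, in the inductive step you project to $\X_\alpha\cong\C(\alpha)\times\X(S\setminus\alpha)$ but apply the inductive hypothesis only to the factor $\X(S\setminus\alpha)$. The annular factor $\C(\alpha)$ is never localized: knowing that $f(B')$ is near $\X_\alpha$ and that its $\X(S\setminus\alpha)$--component is near a standard flat does not place $f(B')$ near a standard flat in $\X$, because $\C(\alpha)$ may be a horoball in $\HH^2$ (or a copy of $\ZZ$), and a standard flat must project to a quasi-geodesic there. Already for $S$ a once-punctured torus your recursion terminates with the image merely near $\X_\alpha=\C(\alpha)$, a two-dimensional set, with no further control, so the conclusion fails. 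The fix is again \propref{Efficient-to-Hyperbolic}: after the inductive call on $\X(S\setminus\alpha)$ one must project to $\C(\alpha)$ and pass to a further sub-box whose image there lies within $O(\ep_0 R')$ of a geodesic. (Your explicit treatment of the disconnected case is more careful than the paper's, and the sequential restriction scheme is sound in spirit, but it too ultimately bottoms out in the hyperbolic base case that your proposal skips.)
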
 

\begin{proof}
We prove the theorem by induction on the complexity $\xi=\xi(S)$
of the surface $S$ (see \eqnref{Complexity}). If $\xi=0$, then
\[
S = \coprod_{i=1}^m S_i,
\] 
where each $S_i$ is either a once-punctured torus or
a four-times-punctured spheres. When $\X$ is the pants complex,
$\X(S_i)$ is quasi-isometric to the Farey graph;
when $\X$ is the augmented marking space, $\X(S_i)$ is 
isometric to a copy of the hyperbolic plane; and when $\X$ is the 
marking complex $\X(S_i)$, is a graph whose vertices are the  edges of the 
Farey graph and two vertices are connected if the associated edges have a 
common vertex. The latter is 
known to be quasi-isometric to a tree. Hence, in all cases, $\X(S_i)$ is a 
Gromov hyperbolic space. That is, $\X$ is a product of hyperbolic spaces. 

In this case $R_\xi=R_0$ and $\ep_\xi=\ep_0$. Let $f_i \from B \to \X(S_i)$ 
be the projection of $f$ to $\X(S_i)$. by \lemref{Easy}, $f_i$ is still 
$\ep_0$--efficient. Applying, \propref{Efficient-to-Hyperbolic} to 
$f_1 \from B \to \X(S_1)$, we obtain a sub-box $B_1$ where $f_1(B_1)$ 
lies in an $O(\ep_0 R_0)$--neighborhood of a quasi-geodesic in $\X(S_1)$. 
But in this case, quasi-geodesics are also preferred paths. 
Now we apply \propref{Efficient-to-Hyperbolic} to 
$f_2 \from B_1 \to \X(S_2)$ to obtain a box $B_2$ so that $f_2(B_2)$ lies in 
an $O(\ep_0 R_0)$--neighborhood of a preferred path in $\X(S_2)$. Continuing 
this way, we find a box $B_m$ where the image of every $f_i$ lies in an 
$O(\ep_0 R_0)$--neighborhood of a preferred path in $\X(S_i)$. This means 
$f(B_m)$ lies in a $O(\ep_0 R_0)$--neighborhood of a standard flat in $\X$.  
Note that $B_m$ has the same size as $B$ (within uniform multiplicative error). 
This proves the base case of the induction. 

Assume now that $\xi$ is non-zero. Apply \thmref{Homework}. If the 
second condition holds, we are done for 
\[
R' \gmul  \sqrt[3]{\ep_\xi} \, R_\xi 
  = \frac{\sqrt[3]{\ep_\xi}}{\ep_\xi} \, R_0 \geq R_0,
\]
since a preferred path  is itself is a standard flat and 
$\sqrt[3]{\ep_\xi} = \ep_{\xi-1}^2 \leq \ep_0$.

Otherwise, we have a box $\bar B$ of size 
\[
\bar R \gmul \sqrt[3]{\ep_\xi^2}\,  R_\xi \geq R_{\xi-1}. 
\]
that maps to a $O(\sqrt[3]{\ep_\xi} \ \bar R)$--neighborhood of $\X_\alpha$ for
some curve $\alpha$. The map $f$ is $\ep_{\xi-1}^2$--efficient
because, reducing the size of the box by some factor (in this case
$ {\sqrt[3]{\ep_\xi^{-2}}}$) only makes the efficiency constant 
increase  by the same factor and
\[
\frac {\ep_\xi}{ \sqrt[3]{\ep_\xi^2}} = \sqrt[3]{\ep_\xi} = \ep_{\xi-1}^2.
\]

Composing $f$ with the closest point projection map to $\X_\alpha$ and using 
the fact that $\X_\alpha$ is quasi-isometric to 
$\C(\alpha) \times \X(S \setminus \alpha)$, we obtain a map
\[
\bar f \from  \bar B \to \C(\alpha) \times \X(S \setminus \alpha).
\]
By part (4) of \lemref{Easy}, $\bar f$ is $\ep_{\xi-1}$--efficient. 

Projecting to the second factor, we have a $\ep_{\xi-1}$--efficient map from a 
box of size $R_{\xi-1}$ to $\X(S \setminus \alpha)$
which by the inductive assumption has a sub-box $B_0$ of size at least $R_0$ that 
stays in $O(\ep_0 R_0)$--neighborhood of a standard flat in 
$\X(S\setminus \alpha)$. Now projecting to the first factor and applying
\propref{Efficient-to-Hyperbolic} ($\C(\alpha)$ is hyperbolic), we find a sub-box 
$B'$ of size at least $R_0$ that stays in a $O(\ep_0 R_0)$--neighborhood of a line
in $\C(\alpha)$. That is, $f(B')$ stays in a $O(\ep_0 R_0)$--neighborhood
of a standard flat in $\X$. This finishes the proof. 
\end{proof}

\begin{proof}[Proof of \thmref{Intro-Standard-Flat}]
Let $R_1$ be large enough that the box $B$ is guaranteed 
(by \thmref{Differentiable}) to have
a sub-box $B_\xi$ of size at least $R_\xi$ where the restriction of $f$
to $B_\xi$ is $\ep_\xi$--efficient. Note that we are not using 
the full force of the theorem; we need only one efficient sub-box.
Apply \thmref{Standard-Flat} to $f \from B_\xi \to \X$ to obtain the theorem. 
\end{proof}

\begin{proof}[Proof of \thmref{Rank}]
 Pick $1/\ep_0 \gg K$ and $R_0 \gg C/\ep_0$. Apply 
\thmref{Intro-Standard-Flat} to obtain a constant $R_1$
and let $R_2$ be a any constant greater than $R_1$. Then, the image
of a sub-box $B'$ of $B$ of size $R'>R_0$ is in a 
$O(\ep_0 R')$--neighborhood of a flat $F \from \R^m \to \X$. 
Taking a composition of $f$, the closest point projection to the image of $F$ 
and then $F^{-1}$, we obtain a map  $G \from B' \to \R^m$ with the 
property that, for $p, q \in B'$, 
\begin{equation} \label{Eq:NearlyQI}
d_{\R^m} \big( G(p), G(q) \big) \emul 
d_{\R^n} (p,q) \pm O(\ep R'). 
\end{equation}
We show that there is no such a map if $n$ is bigger than 
$\rank_{top}(\X)\geq m$. The proof is similar to the proof of nonexistence
of quasi-isometries between $R^n$ and $R^m$.  
Consider a net of $O(\ep_0 R')^n$ points in $B'$ that are pairwise
$K_1 \ep_0 R'$ apart, where $K_1$ is much larger than constants involved
in \eqnref{NearlyQI}. Then,  by the choice of $K_1$, the image points are at least 
distance $\epsilon_0 R'$--apart and are contained in a ball of radius $O(R')$ in
$\R^m$. The number points in a such net is of order of
$O(R' \ep_0)^m$.  Choosing $R'$ large enough (which can be done by choosing 
$R_0$ large) we obtain a contradiction. 

To see the second assertion, we note that if, for every subsurface $W_i$,
$\C(W_i)$ contains an infinite geodesic, then the product of these
geodesics is a quasi-isometric image of $R^n$. This fails when $W_i$
is an annulus and $\X$ is the augmented marking space (a horoball does not 
contain a bi-infinite geodesic). In this case, we choose a $\ep_0$--thick point 
$X$ in $\T(S)$ and a pants decomposition of curves of length  at most some fixed $B$.   
The point $x \in \X$
associated to $X$ is uniformly close to the product region associated to $P$ 
(see \secref{Standard}). Consider an infinite ray for every horoball associated to a 
curve in $P$. The product of these rays is a quasi-isometric image of an orthant 
in $\R^n$. 
\end{proof}

\bibliographystyle{alpha}

\end{document}